\newtheorem{theorem}{Theorem}[section]
\newtheorem{corollary}{Corollary}
\newtheorem{lemma}[theorem]{Lemma}
\theoremstyle{definition}
\newtheorem{remark}{Remark}
\title[Some Estimates of Virtual Element Methods] 
      {Some Estimates of Virtual Element Methods for Fourth Order Problems}
\author[Qingguang Guan]{}
\subjclass{Primary: 65N30; Secondary: 35J40.}
 \keywords{Virtual Element method, Shape regularity, Fourth order, Projection and interpolation operators, Error estimates}
 \email{qg11b@my.fsu.edu}
\thanks{$^*$ Corresponding author: Qingguang Guan}
\begin{document}
	
\makeatletter
\newenvironment{equationate}{%
	\itemize
	\let\orig@item\item
	\def\item{\orig@item[]\refstepcounter{equation}\def\item{\hfill(\theequation)\orig@item[]\refstepcounter{equation}}}
}{%
	\hfill(\theequation)%
	\enditemize
}
\makeatother

\maketitle

\centerline{\scshape Qingguang Guan$^*$}
\medskip
{\footnotesize
\centerline{Department of Mathematics}
\centerline{Temple University}
\centerline{Philadelphia, PA 19122, USA}
} 

\medskip

\begin{abstract}
In this paper, we employ the techniques developed for second order operators to obtain the new estimates of Virtual Element Method for fourth order operators. The analysis is based on elements with proper shape regularity. Estimates for projection and interpolation operators are derived. Also, the biharmonic problem is solved by Virtual Element Method, optimal error estimates are obtained. Our choice of the discrete form for the right hand side function relaxes the regularity requirement in previous work and the error estimates between exact solutions and the computable numerical solutions are provided. 	
\end{abstract}

\section{Introduction}
Virtual Element methods are designed on polygonal/polyhedral meshes, see \cite{Beirao13}--\cite{Beir16-3}. It gives us the flexibility to use a wide range of meshes which is a great advantage especially in computational mechanics. And the computation cost is less than weak Galerkin finite element method, which can also employ general shape elements, see \cite{Wang14}--\cite{Guan18}.  Virtual Element Methods for second order problems are well studied in \cite{Beirao13}--\cite{Beir16-3}, also the stability and error analysis for these methods are obtained. New techniques based on the shape regularity and discrete norm for virtual element functions are developed in \cite{Brenner17}.  Virtual Element Methods for fourth order problems are analyzed in \cite{Brezzi13,Chinosi16}, however, the stability and error analysis are not completed. 

The motivations for this paper are: firstly, apply the techniques in \cite{Brenner17} to higher order problems, getting the basic estimates; secondly, to improve the error analysis for biharmonic equation. If we modify the virtual element method slightly then the regularity requirement for right hand side function $f$ can be relaxed;  thirdly, since the numerical solutions $u_h$ of virtual element methods can't be computed directly, to overcome this drawback, we present two ways to get the approximations of $u_h$ preserving the same convergence rate, and the computation of the approximations is much more efficient.

The paper is organized as follows: In Section 2, 2.1--2.3, the definition of two dimensional virtual element with the shape regularity is given. The projection in 2.2 is the same as in \cite{Chinosi16}. Compared with the definition of local virtual element space in \cite{Brezzi13,Chinosi16}, our method has the right hand side polynomial $q_v(= \Delta^2v) \in \mathbb{P}_k(D)$ not $\mathbb{P}_{k-4}(D)$ or $\mathbb{P}_{k-2}(D)$. This new definition provides more degrees of freedom which is necessary to denote the $L^2$ projection from virtual element function space to $\mathbb{P}_k(D)$.
In Section 2.4, a semi-norm $|||\cdot|||_{k,D}$ similar as in \cite{Brenner17} is presented. The local estimates for the projections $\Pi_{k,D}^\Delta$ and $\Pi_{k,D}^0$ are obtained. In Section 2.5--2.6, a piecewise $C^1$ polynomial $w$ depending only on the values on $\partial D$ is constructed and the local interpolation error is proved. In section 3, we get the error estimates between $u_h$, its approximations and the exact solution for biharmonic equation. In Section 4, we draw the conclusions. 

\section{Local Virtual Element Spaces in Two Dimensions}
Let $D$ be a polygon in $\mathbb{R}^2$ with diameter $h_D$. 
For a nonnegative integer $k$, $\mathbb{P}_k$ is the space of polynomials of degree $\leq k$ and $\mathbb{P}_{-k} = \{0\},\ k \geq 1.$ The space $\mathbb{P}_k(D)$ is the restriction of $\mathbb{P}_k$ to $D.$ 

The index $(r,s)$ related to the degree of $k\geq 2$ is defined by
$$
r \geq \max\{3,k\},\ s = k-1,\ m=k-4.
$$
A natural choice is $r = \max\{3,k\}$, however, $r$ can be greater. The set of edges of $D$ is denoted by $\mathcal{E}_D$ and $\mathbb{P}_k(e)$ is the restriction of $\mathbb{P}_k$ to $e\in \mathcal{E}_D.$ Then we define
$\mathbb{P}_{r,s}(\partial D)$ as
\begin{eqnarray*}
	\mathbb{P}_{r,s}(\partial D)
	&=& 
	{\bigg \{ }
	v|_e\in \mathbb{P}_r(e),\ 
	\left.\frac{\partial v}{\partial n}\right|_e \in \mathbb{P}_s(e), \forall e\in \mathcal{E}_D,\ \text{and }   
	v,\nabla v \in C(\partial D),\\
	&&\text{ values of $v$,$\nabla v$ at each vertex of $D$ are given degrees of freedom} 
	{\bigg\} }.
\end{eqnarray*}
\subsection{Shape Regularity}\label{Shape-Regularity}
Here the shape regularity assumptions are the same as in \cite{Brenner17}. Let $ {D}$ be the polygon with diameter $h_D$. Assume that 
\begin{equation}\label{assume1}
	| {e}|\geq \rho h_D\quad {\rm for\ any\ edge}\  {e} \ {\rm of}\  {D},\ \rho\in(0,1),
\end{equation}
and 
\begin{equation}\label{assume2}
	{D}\ {\rm is\ star\ shaped\ with\ respect\ to\ a\ disc\ \mathfrak{B}\ with\ radius\ =\ \rho}h_D.
\end{equation}
The center of $\mathfrak{B}$ is the star center of $ {D}.$
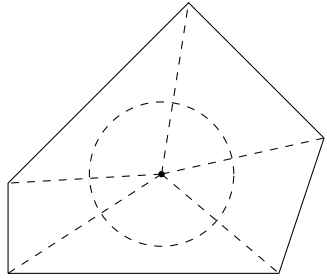
\begin{figure}[H]
	\begin{center}
		\begin{tikzpicture}[scale = 1.2]
			\coordinate (A) at (0,0);
			\coordinate (B) at (3,0);
			\coordinate (C) at (3.5,1.5);
			\coordinate (D) at (2,3);
			\coordinate (E) at (0,1);
			\coordinate (O) at ($1/5*(A)+1/5*(B)+1/5*(C)+1/5*(D)+1/5*(E)$);
			\draw (A)
			--(B)
			--(C)
			--(D)
			--(E)
			--(A);
			\draw[style=dashed](O) circle (0.8);
			\fill [black] (O) circle (1pt);
			\draw[style=dashed](O)--(A);
			\draw[style=dashed](O)--(B);
			\draw[style=dashed](O)--(C);
			\draw[style=dashed](O)--(D);
			\draw[style=dashed](O)--(E);
		\end{tikzpicture}
	\end{center}\caption{A subdomain}\label{fg1}
\end{figure}
The polygon in Figure \ref{fg1} is an example of ${D}$, we denote by $\mathcal{T}_D$ the corresponding triangulation of D.
We will use the notation $A \apprle B$ to represent the inequality $A \leq (constant)B$, where the
positive constant depends only on $k$ and the parameter $\rho$, and it increases with $k$ and $1/\rho$.
The notation $A \approx B$ is equivalent to $A \apprle B$ and $A \apprge B$.
\begin{lemma}\label{bramble}\cite{Bramble70} Bramble-Hilbert Estimates. Conditions \eqref{assume1}-\eqref{assume2} imply that we have the following estimates:
	\begin{equation}
		\inf\limits_{q\in\mathbb{P}_l} |\xi - q|_{H^m(D)} \apprle h_D^{l+1-m}
		|\xi|_{ H^{l+1}(D)}, 
	\end{equation}
for any $\xi\in H^{l+1}(D),\ l = 0,\cdots, k,\ and\ 0\leq m \leq l.$
\end{lemma}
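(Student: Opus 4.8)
The plan is to reduce the statement to a fixed reference configuration by a dilation, establish a Bramble--Hilbert inequality there with a constant depending only on $k$ and $\rho$, and then scale back. Set $\hat D = h_D^{-1}D$, so that $\hat D$ has diameter $1$ and, by \eqref{assume1}--\eqref{assume2}, is star shaped with respect to a disc $\hat{\mathfrak B}$ of radius $\rho$, with every edge of length at least $\rho$. For $\xi\in H^{l+1}(D)$ put $\hat\xi(\hat x) = \xi(h_D\hat x)$; a change of variables in two dimensions gives the homogeneity relation $|\hat\xi|_{H^j(\hat D)} = h_D^{\,j-1}\,|\xi|_{H^j(D)}$ for every $j\ge 0$. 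Consequently it suffices to produce, for each admissible $\hat D$, a polynomial $\hat q\in\mathbb{P}_l$ with $|\hat\xi-\hat q|_{H^m(\hat D)}\apprle|\hat\xi|_{H^{l+1}(\hat D)}$, the implied constant depending only on $k$ and $\rho$: rewriting both sides via the homogeneity relation (with $j=m$ on the left and $j=l+1$ on the right) and dividing by $h_D^{m-1}$ turns this into $|\xi - q|_{H^m(D)}\apprle h_D^{l+1-m}|\xi|_{H^{l+1}(D)}$ with $q(x)=\hat q(x/h_D)$, which is the claim.

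On the reference domain I would use the averaged Taylor polynomial of Dupont and Scott rather than an abstract compactness argument, since this yields the explicit dependence on $k$ and $\rho$ demanded by the $\apprle$ convention. Fix a nonnegative cutoff $\phi\in C_c^\infty(\hat{\mathfrak B})$ with $\int\phi\,d\hat y = 1$ and $\|D^\beta\phi\|_{L^\infty}\apprle 1$ for $|\beta|\le l+1$, which is possible because $\hat{\mathfrak B}$ has radius $\rho$; let $T^l_{\hat y}\hat\xi$ denote the degree-$l$ Taylor polynomial of $\hat\xi$ about $\hat y$ and define $Q^l\hat\xi(\hat x) = \int_{\hat{\mathfrak B}}T^l_{\hat y}\hat\xi(\hat x)\,\phi(\hat y)\,d\hat y$. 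This operator is well defined on $L^1(\hat D)$, reproduces $\mathbb{P}_l$, and, because $\hat D$ is star shaped with respect to $\hat{\mathfrak B}$, the segment from any $\hat x\in\hat D$ to any $\hat y\in\hat{\mathfrak B}$ lies in $\hat D$, so the Taylor remainder yields an integral representation $\hat\xi - Q^l\hat\xi = \sum_{|\alpha|=l+1}\int_{\hat D}k_\alpha(\hat x,\hat z)\,D^\alpha\hat\xi(\hat z)\,d\hat z$ involving only the top-order derivatives of $\hat\xi$, with kernels $k_\alpha$ whose size is controlled by $\|D^\beta\phi\|_{L^\infty}$ and whose weak singularity along $\hat x=\hat z$ is mild enough that each $D^\gamma_{\hat x}k_\alpha(\hat x,\cdot)$ lies in $L^1(\hat D)$ uniformly whenever $|\gamma|\le l$.

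Differentiating the remainder representation $m$ times in $\hat x$ (legitimate since $m\le l$), bounding the resulting integral operator by Minkowski's integral inequality together with the uniform $L^1$ bound on the kernels, and using $|\hat x-\hat z|\le\operatorname{diam}\hat D = 1$, I obtain $|\hat\xi - Q^l\hat\xi|_{H^m(\hat D)}\apprle|\hat\xi|_{H^{l+1}(\hat D)}$ with constant depending only on $l\le k$ and $\rho$. Taking $\hat q = Q^l\hat\xi$ and rescaling as in the first paragraph finishes the proof. I expect the main obstacle to be the analysis of the remainder kernels $k_\alpha$: one must verify that star-shapedness with respect to the disc of radius $\rho h_D$ is precisely what keeps the Taylor segments inside $D$, extract the correct homogeneity of $k_\alpha$ so that differentiating up to $m\le l$ times preserves integrability, and track that every constant appearing there grows only with $1/\rho$ and $k$; once this is in place, the remaining scaling and the summation over $|\alpha|=l+1$ are routine.
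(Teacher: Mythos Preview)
Your proof is correct and is precisely the averaged Taylor polynomial argument of Dupont--Scott as presented in Brenner--Scott, Lemma~4.3.8, which is exactly the reference the paper cites in lieu of a proof. The paper gives no argument of its own beyond that citation, so your write-up simply fills in what the paper leaves to \cite{Brenner07}.
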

Details can be found in \cite{Brenner07}, Lemma 4.3.8.
\begin{lemma}\label{imbedding}\cite{Adams03} Sobolev Imbedding Theorem 4.12. From \eqref{assume1}-\eqref{assume2}, we have:
	\begin{equation}
		\|\xi\|_{C^j(D)} \apprle \sum\limits_{l=0}^{2+j}h_D^{l-1}
		|\xi|_{ H^{l}(D)}		
		, \ 
		\forall \xi\in H^{2+j}(D),\ j=1,2.
	\end{equation}
\end{lemma}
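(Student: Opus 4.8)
I would prove this by stripping away the geometry with a dilation and then quoting the classical Sobolev imbedding on a normalized domain. Let $x_0$ be the star center of $D$ from \eqref{assume2}, and introduce the rescaled domain $\widehat D:=h_D^{-1}(D-x_0)$ together with the rescaled function $\widehat\xi(\widehat x):=\xi(x_0+h_D\widehat x)$ for $\widehat x\in\widehat D$. Since \eqref{assume1}--\eqref{assume2} are invariant under this dilation, $\widehat D$ has diameter $1$, is star shaped with respect to a disc of radius $\rho$, and has every edge of length at least $\rho$; a bounded domain star shaped with respect to a disc of radius $\rho$ and contained in a disc of radius $1$ satisfies an interior cone condition (equivalently, a strong local Lipschitz condition) with cone aperture and height determined by $\rho$ alone.

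Granting this, the Sobolev imbedding $H^{2+j}(\widehat D)\hookrightarrow C^j(\overline{\widehat D})$, which is Theorem~4.12 of \cite{Adams03} under the strong local Lipschitz condition, would give
\[
\|\widehat\xi\|_{C^j(\widehat D)}\apprle\|\widehat\xi\|_{H^{2+j}(\widehat D)}\leq\sum_{l=0}^{2+j}|\widehat\xi|_{H^l(\widehat D)},\qquad j=1,2,
\]
with a constant depending only on $j$ and $\rho$. To conclude, I would undo the scaling using the elementary identities $|\widehat\xi|_{C^i(\widehat D)}=h_D^{\,i}|\xi|_{C^i(D)}$ (from $D^\alpha_{\widehat x}\widehat\xi=h_D^{|\alpha|}(D^\alpha_x\xi)$) and $|\widehat\xi|_{H^l(\widehat D)}=h_D^{\,l-1}|\xi|_{H^l(D)}$ (the same derivative scaling together with the Jacobian factor $d\widehat x=h_D^{-2}\,dx$); substituting these into the displayed inequality reproduces the stated estimate $\|\xi\|_{C^j(D)}\apprle\sum_{l=0}^{2+j}h_D^{\,l-1}|\xi|_{H^l(D)}$, once $\|\cdot\|_{C^j(D)}$ is understood to carry the natural $h_D$-weights, which is consistent with how the bound is used in the sequel.

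The genuinely delicate point is not this scaling bookkeeping but the claim that the imbedding constant for $\widehat D$ can be chosen uniformly over the entire family of admissible polygons — this is exactly where \eqref{assume1}--\eqref{assume2} are indispensable. I would secure it in one of two ways: either by a compactness argument over the (after normalization, precompact) class of such domains, or, more constructively, by producing a Stein extension operator $E:H^{2+j}(\widehat D)\to H^{2+j}(\mathbb{R}^2)$ whose operator norm depends only on the cone parameters, hence only on $\rho$, and then composing with the imbedding $H^{2+j}(\mathbb{R}^2)\hookrightarrow C^j(\mathbb{R}^2)$. Since the dependence on $j$ (and hence on $k$) enters only through this fixed reference imbedding, the final constant behaves exactly as announced in the conventions stated after Figure~\ref{fg1}, and everything else in the argument is routine.
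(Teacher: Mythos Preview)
The paper gives no proof of this lemma at all --- it simply cites Theorem~4.12 of \cite{Adams03} and moves on --- so your scaling argument is precisely the standard derivation the citation is meant to invoke: reduce to a unit-diameter domain, use star-shapedness with respect to a $\rho$-disc to get a uniform cone (Lipschitz) condition and hence a uniform imbedding constant, then undo the dilation. Your observation about the $h_D$-weighting of $\|\cdot\|_{C^j(D)}$ is apt; the lemma is only ever applied in the paper after the normalization $h_D=1$ (see the proof of Lemma~\ref{interpolation_error}), so the ambiguity is harmless there.
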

\begin{lemma}\label{general-poincare}\cite{Necas11} The Generalized Poincar$\acute{\rm e}$ Inequality. From \eqref{assume1}-\eqref{assume2}, suppose $h_D = 1$, we have:
	\begin{equation}
		\|\xi\|_{H^2(D)}^2 \apprle |\xi|_{H^2(D)}^2+\sum\limits_{i=1}^{2}
		\left( \int_{\partial D}\frac{\partial \xi}{\partial x_i}\  {\rm d}x
		\right)^2 +
		\left( \int_{\partial D}{\xi}\  {\rm d}x
		\right)^2 		
		, \quad 
		\forall \xi\in H^{2}(D).
	\end{equation}
\end{lemma}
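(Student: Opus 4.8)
The plan is to argue by contradiction with a Peetre--Tartar style compactness argument, exploiting that the $H^2$ seminorm together with the three boundary functionals on the right-hand side controls the full $H^2$ norm precisely because the only affine function annihilated by all of them is $0$.

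First, suppose the estimate fails on a fixed $D$ with $h_D=1$. Then there is a sequence $\{\xi_n\}\subset H^2(D)$ with $\|\xi_n\|_{H^2(D)}=1$ and
\[
|\xi_n|_{H^2(D)}^2+\sum_{i=1}^2\left(\int_{\partial D}\frac{\partial \xi_n}{\partial x_i}\,dx\right)^2+\left(\int_{\partial D}\xi_n\,dx\right)^2\longrightarrow 0 .
\]
Since $\{\xi_n\}$ is bounded in $H^2(D)$ and \eqref{assume1}--\eqref{assume2} make $D$ a bounded Lipschitz domain, the Rellich--Kondrachov theorem gives, after passing to a subsequence, $\xi_n\to\xi$ strongly in $H^1(D)$. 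Because $|\xi_n-\xi_m|_{H^2(D)}\le|\xi_n|_{H^2(D)}+|\xi_m|_{H^2(D)}\to 0$ while $\|\xi_n-\xi_m\|_{H^1(D)}\to 0$, the sequence is Cauchy in $H^2(D)$; hence $\xi_n\to\xi$ in $H^2(D)$, so $\|\xi\|_{H^2(D)}=1$ and $|\xi|_{H^2(D)}=0$.

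Next I would identify the limit. Since $D$ is connected and $|\xi|_{H^2(D)}=0$, $\xi$ is affine, $\xi(x)=a+b\cdot x$. The trace operator $H^1(D)\to L^2(\partial D)$ is bounded, so the maps $\xi\mapsto\int_{\partial D}\partial_i\xi\,dx$ and $\xi\mapsto\int_{\partial D}\xi\,dx$ are continuous on $H^2(D)$; passing to the limit in the display above yields $\int_{\partial D}\partial_i\xi\,dx=0$ for $i=1,2$ and $\int_{\partial D}\xi\,dx=0$. But $\partial_i\xi\equiv b_i$, so $b_i|\partial D|=0$, i.e. $b=0$, and then $\int_{\partial D}\xi\,dx=a|\partial D|=0$ forces $a=0$. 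Thus $\xi\equiv 0$, contradicting $\|\xi\|_{H^2(D)}=1$; this establishes the inequality with some constant depending on $D$.

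The remaining and genuinely delicate point is that the hidden constant must depend only on $\rho$ (the $\apprle$ convention), not on the particular polygon. I would get this from a second compactness argument over admissible domains: if the constant blew up along polygons $D_n$ with $h_{D_n}=1$ satisfying \eqref{assume1}--\eqref{assume2} with fixed $\rho$, then all $D_n$ lie in a ball of radius $1$ and \eqref{assume1} bounds the number of edges by $\apprle \rho^{-1}$, so a subsequence of vertex configurations converges to an admissible limit polygon $D_\ast$; after a uniform Sobolev extension to a fixed neighbourhood and a bi-Lipschitz change of variables matching $D_n$ to $D_\ast$, the fixed-domain argument on $D_\ast$ gives the contradiction. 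Alternatively one may simply invoke the generalized Poincar\'e inequality of \cite{Necas11} on a reference configuration and transport it via the bi-Lipschitz maps furnished by star-shapedness. Either way the stated scaling in $h_D$ is the one recovered after undoing the normalization $h_D=1$. I expect this uniformity step, rather than the fixed-domain inequality, to be the main obstacle.
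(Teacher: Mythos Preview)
Your compactness argument for a fixed domain is correct and is precisely the standard Peetre--Tartar proof of generalized Poincar\'e inequalities; this is essentially what \cite{Necas11}, Section~1.1.6 does, and the paper's own ``proof'' is nothing more than a pointer to that reference. So on the fixed-domain part you and the paper agree, except that you actually wrote it out.

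Where you go beyond the paper is in flagging the uniformity of the constant in $\rho$. The paper's one-line citation does not address this at all, and Ne\v{c}as' argument is for a fixed Lipschitz domain. Your diagnosis is right: the nontrivial content hidden in the $\apprle$ is exactly the compactness over the family of admissible polygons. Your sketch (bounded number of edges from \eqref{assume1}, compactness of vertex configurations, uniform bi-Lipschitz transport via star-shapedness) is the right strategy and is how such uniform estimates are obtained in the VEM literature (cf.\ \cite{Brenner17}). One technical point to tighten if you carry it out: the uniform Sobolev extension operator you invoke requires a uniform Lipschitz character for the $D_n$, which does follow from \eqref{assume1}--\eqref{assume2} but deserves a sentence; and the bi-Lipschitz map from $D_n$ to $D_\ast$ should be constructed so that its derivatives up to second order are controlled, since you are working in $H^2$. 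With those caveats, your proposal is both correct and more informative than what the paper supplies.
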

\begin{proof}
	The proof is similar as the one in \cite{Necas11}, Section 1.1.6.
\end{proof}
\subsection{The Projection $\Pi_{k,D}^\Delta $}
By the generalized poincar$\acute{\rm e}$ inequality from Lemma \ref{general-poincare}, the Sobolev space $H^2(D)$ is a Hilbert space with the inner product $(((\cdot)))$ denoted as:
\begin{eqnarray}\label{inner-p}
	(((\xi,v))) &=& ((\xi,v))_D +\sum\limits_{i=1}^{2}
	\left( \int_{\partial D}\frac{\partial \xi}{\partial x_i}\  {\rm d}x
	\right)\left( \int_{\partial D}\frac{\partial v}{\partial x_i}\  {\rm d}x
	\right) \nonumber\\
	&&+
	\left( \int_{\partial D}{\xi}\  {\rm d}x
	\right)\left( \int_{\partial D}{v}\  {\rm d}x
	\right)	,
\end{eqnarray}
where 
$$
((\xi,v))_D = \int_D 
\sum_{i,j}\frac{\partial^2 \xi}{\partial x_i\partial x_j}
\frac{\partial^2 v}{\partial x_i\partial x_j}\  {\rm d}x,\ i= 1,2; j=1,2,
$$
for any $\xi, v \in H^{2}(D).$

The discrete operator $\Pi_{k,D}^\Delta: H^2(D)\rightarrow \mathbb{P}_k(D)$  is denoted with respect to $(((\cdot)))$ as: 
\begin{equation}\label{ccc}
	(((\Pi_{k,D}^\Delta \xi, q)))_D
	= ((( \xi, q)))_D,\ \forall q\in \mathbb{P}_k(D), 
\end{equation}
in \eqref{ccc}, let $q=1,$ we have \eqref{pikd3}, let $q=x$ then $q=y$, we have \eqref{pikd2},
with the help of \eqref{pikd3} and \eqref{pikd2}, the definition of $\Pi_{k,D}^\Delta$ in \eqref{ccc} can be simplified as \eqref{pikd1}, 
\begin{eqnarray}
	\int_{\partial D} \Pi_{k,D}^\Delta \xi\ {\rm d}s
	&=& \int_{\partial D}   \xi\ {\rm d}s.
	\label{pikd3}\\
	\int_{\partial D} \nabla \Pi_{k,D}^\Delta \xi\  {\rm d}x
	&=& \int_{\partial D} \nabla  \xi\ {\rm d}s,
	\label{pikd2}\\
	((\Pi_{k,D}^\Delta \xi, q))_D
	=& (( &\xi, q))_D, \ \forall \xi\in H^2(D), \ \forall q\in \mathbb{P}_k(D), 
	\label{pikd1}
\end{eqnarray}	
from now on we will use \eqref{pikd3}-\eqref{pikd1} instead of \eqref{ccc}. 

On the domain $D$, with boundary $\partial D$,
we denote by ${\bf n} = (n_1,n_2)$ the outward unit normal vector to $\partial D$,
and by ${\bf t} = (t_1,t_2)$ the unit tangent vector in the counterclockwise
ordering of the boundary. For $u\in H^2(D)$, we define 
$${\bf D}^2 u = (u_{11},u_{22},u_{12},u_{21})=\left(\frac{\partial^2 u}{\partial x_1^2},\frac{\partial^2 u}{\partial x_2^2},\frac{\partial^2 u}{\partial x_1\partial x_2},\frac{\partial^2 u}{\partial x_2\partial x_1}\right).$$
We then denote by
$U_{nn}({\bf D}^2 u) := \sum\limits_{i,j}u_{ij} n_i n_j$ the normal bending moment, by $U_{nt}({\bf D}^2 u) := \sum\limits_{i,j}u_{ij} n_i t_j$
the twisting moment, and by $Q_{n}({\bf D}^2 u) := \sum\limits_{i,j}\frac{\partial u_{ij}}{\partial x_i} n_j$ the normal shear force, and $U_{\Delta}({\bf D}^2 u) = \Delta^2 u$.

After applying integration by parts twice, we have
\begin{eqnarray}\label{eq_bp1}
	((u,v))_D&=&\int_D U_{\Delta}({\bf D}^2 u) v \ {\rm d}x
	+\int_{\partial D} U_{nn}({\bf D}^2 u)\frac{\partial v}{\partial n}\ {\rm d}s\nonumber \\
	&&-\int_{\partial D} \left(Q_{n}({\bf D}^2 u)+\frac{\partial U_{nt}({\bf D}^2 u)}{\partial t}\right)v\ {\rm d}s.
\end{eqnarray}

\subsection{Local VEM Space $\mathcal{Q}_k(D)$}
Then, for $k\geq 2,$ the local VEM space $\mathcal{Q}^k(D)\in H^2(D)$ is defined as: $v\in H^2(D)$ belongs to $\mathcal{Q}^k(D)$ if and only if (i) $v|_{\partial D}$ and trace of $\frac{\partial v}{\partial n}$ on $\partial D$ belongs to $\mathbb{P}_{r,s}(\partial D),$ 
then (ii) there exists a polynomial $q_v(= \Delta^2v) \in \mathbb{P}_k(D)$ such that
\begin{equation}\label{var-form}
	((v,w))_D= (q_v, w),\quad \forall w\in H^2_0(D),
\end{equation}
and (iii)
\begin{equation}\label{equal-parts}
	\Pi_{k,D}^0v-\Pi_{k,D}^\Delta v \in \mathbb{P}_{k-4}(D),
\end{equation}
where $\Pi_{k,D}^0$ is the projection from $L^2(D)$ onto $\mathbb{P}_k(D)$.
\begin{remark}
	It's obvious that $\mathbb{P}_k(D)$ is a subspace of $\mathcal{Q}^k(D).$ From \eqref{equal-parts}, we have
	$\Pi_{k,D}^0v=\Pi_{k,D}^\Delta v,$ $k=2,3$. \\
	The choice (ii) can be replaced by $q_v(= \Delta^2v) \in \mathbb{P}_{k-2}(D)$ in \cite{Chinosi16},  also  Lemma \ref{equivalence_b}--Lemma \ref{interpolation_error}, and Corollary \ref{corollary_1}--Corollary \ref{corollary_2} are valid. The reason we chose $q_v\in \mathbb{P}_{k}(D)$ is that it helps to get the same error estimate with less smooth right hand side $f$.	For $k=2,3$, we only require $f\in L^2(\Omega)$, while in \cite{Chinosi16}, $f\in H^1(\Omega)$ for $k=2$, and $f\in H^2(\Omega)$ for $k=3$. 
\end{remark}

For completeness, we recall the definition for degrees of freedom in \cite{Brezzi13}, employ the following notation: for $i$ a nonnegative integer, $e$ an edge with midpoint $x_e$, length $h_e$, the set of $i+1$ normalized monomials is denoted by $\mathcal{M}_i^e$
$$
\mathcal{M}_i^e :=\left\{1,\frac{x-x_e}{h_e},\left(\frac{x-x_e}{h_e}\right)^2,\cdots, \left(\frac{x-x_e}{h_e}\right)^i\right\}.
$$
And for domain $D$ with diameter $h_D$ and barycenter ${\bf x}_D$, the set of $(i+1)(i+2)/2$ normalized monomials is defined by $\mathcal{M}_i^D$
$$
\mathcal{M}_i^D :=\left\{ \left(\frac{x-{\bf x}_D}{h_D}\right)^\alpha,\quad |\alpha|\leq i\right\},
$$
where $\alpha$ is a nonnegative multiindex $\alpha = (\alpha_1,\alpha_2)$, $|\alpha|=\alpha_1+\alpha_2$ and ${\bf x}^\alpha=x_1^{\alpha_1} x_2^{\alpha_2}$. In $D$ the degrees of freedom are denoted as:
\begin{equationate}
	\item{$\bullet$ The values $v$ at each vertex of $D$.} \label{bul1}
	\item{$\bullet$ The values $\nabla v$ at each vertex of $D$.}
	\item{$\bullet$ For $r>3$, the moments $\frac{1}{h_e}\int_e q(s)v\ {\rm d}s,\ \forall q\in \mathcal{M}_{r-4}^e,\ \forall e\in \partial D$.} 
	\item{$\bullet$ For $s>1$, the moments $\frac{1}{h_e}\int_e q(s)\frac{\partial v}{\partial n}\ {\rm d}s,\ \forall q\in \mathcal{M}_{s-2}^e,\ \forall e\in \partial D$.} 
	\item{$\bullet$ For $m\geq 0$, the moments $\frac{1}{|D|}\int_D q(x)v(x)\ {\rm d}x,\ \forall q\in \mathcal{M}_{m}^D$.} \label{bul5}
\end{equationate}
\begin{lemma}\label{lemma1}
	Given any $g\in \mathbb{P}_{r,s}({\partial{D}})$ and $f\in \mathbb{P}_k(D)$, there exists a unique function $v\in H^2(D)$ such that (i) $v=g,$ $\frac{\partial v}{\partial n}=\frac{\partial g}{\partial n}$ on $\partial D$ and (ii)
	$$
	\int_D{\bf D}^2 v \cdot {\bf D}^2 w\ {\rm d}x= \int_{D} fw\ {\rm d}x,\quad \forall w\in H^2_0(D). 
	$$
\end{lemma}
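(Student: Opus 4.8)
The plan is to recognize Lemma~\ref{lemma1} as a classical variational well-posedness statement for the biharmonic operator with inhomogeneous boundary data, and to prove it by reduction to a homogeneous problem followed by the Lax--Milgram theorem. First I would fix a function $\tilde g\in H^2(D)$ that realizes the boundary data, i.e. $\tilde g = g$ and $\partial\tilde g/\partial n = \partial g/\partial n$ on $\partial D$; such a lifting exists because $g\in\mathbb{P}_{r,s}(\partial D)$ is piecewise polynomial with the matching $C^1$ vertex values, so one can take $\tilde g$ to be, for instance, any $H^2$ extension of the trace pair (even a polynomial extension works, or one builds it edge by edge). Writing $v = \tilde g + v_0$ with $v_0\in H^2_0(D)$, the problem becomes: find $v_0\in H^2_0(D)$ with
\begin{equation}
\int_D {\bf D}^2 v_0\cdot{\bf D}^2 w\ dx = \int_D fw\ dx - \int_D {\bf D}^2\tilde g\cdot{\bf D}^2 w\ dx,\qquad\forall w\in H^2_0(D).
\end{equation}

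Next I would check the hypotheses of Lax--Milgram for the bilinear form $a(v_0,w) = \int_D {\bf D}^2 v_0\cdot{\bf D}^2 w\ dx = ((v_0,w))_D$ on $H^2_0(D)\times H^2_0(D)$. Boundedness is immediate from Cauchy--Schwarz since $|((v_0,w))_D|\le |v_0|_{H^2(D)}|w|_{H^2(D)}\le \|v_0\|_{H^2(D)}\|w\|_{H^2(D)}$. Coercivity is the one point needing an inequality: on $H^2_0(D)$ the seminorm $|\cdot|_{H^2(D)}$ controls the full norm, which follows from the Generalized Poincar\'e inequality of Lemma~\ref{general-poincare} (the boundary integrals of $\xi$ and of $\partial\xi/\partial x_i$ vanish for $\xi\in H^2_0(D)$), after rescaling to remove the $h_D=1$ normalization. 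The right-hand side functional $w\mapsto \int_D fw\ dx - ((\tilde g,w))_D$ is clearly linear and bounded on $H^2_0(D)$ since $f\in L_2(D)\subset (H^2_0(D))'$ and $\tilde g\in H^2(D)$. Lax--Milgram then yields a unique $v_0\in H^2_0(D)$, hence a unique $v = \tilde g + v_0\in H^2(D)$ satisfying (i) and (ii); uniqueness of $v$ follows because any two solutions differ by an element of $H^2_0(D)$ annihilating $a(\cdot,w)$ for all $w$, which coercivity forces to be $0$ (note the solution $v$ is unique even though the lifting $\tilde g$ is not).

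The main obstacle, such as it is, is purely bookkeeping rather than conceptual: one must be careful that a genuine $H^2(D)$ lifting of the pair $(g,\partial g/\partial n)$ exists and is compatible, i.e. that the prescribed Dirichlet and Neumann traces on adjacent edges are consistent at the vertices — but this is exactly built into the definition of $\mathbb{P}_{r,s}(\partial D)$, which requires $v,\nabla v\in C(\partial D)$ with matching vertex degrees of freedom, so the trace pair is admissible and standard trace-extension theory (or an explicit edgewise construction using the given polynomial data) provides $\tilde g$. A secondary routine point is tracking the scaling in Poincar\'e's inequality; since the $\apprle$ constants are allowed to depend on $\rho$, this causes no trouble. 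Everything else — bilinearity, symmetry, the elementary estimates — is immediate, so the proof is short once the reduction and the coercivity input from Lemma~\ref{general-poincare} are in place.
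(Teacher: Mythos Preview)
Your proposal is correct and matches the paper's approach exactly: lift the boundary data by some $\tilde g\in H^2(D)$, reduce to a homogeneous problem for $\phi=v-\tilde g\in H^2_0(D)$, and solve by Lax--Milgram (the paper leaves the coercivity/boundedness verification implicit). The only detail the paper pins down further is the choice of lifting: rather than invoking abstract trace extension, it takes $\tilde g$ to be the explicit $C^1$ piecewise-polynomial macroelement function constructed in Section~\ref{case1}, a construction it reuses later for the inverse inequality.
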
	
\begin{proof}
	Similar as in \cite{Brenner17}, let $\tilde{g} \in H^2(D)$ be a $C^1$, $\mathbb{P}_k$ piece-wise polynomial constructed in Section \ref{case1}, such that $v=g,$ $\frac{\partial v}{\partial n}=\frac{\partial g}{\partial n}$ on $\partial D$. Then the unique $v\in H^2(D)$ is given by $\phi+\tilde{g}$, where $\phi\in H^2_0(D)$ is defined by
	$$
	\int_D{\bf D}^2 \phi \cdot {\bf D}^2 w\ {\rm d}x= \int_{D} fw\  {\rm d}x
	-\int_D{\bf D}^2 \tilde{g} \cdot {\bf D}^2 w\ {\rm d}x,
	\quad \forall w\in H^2_0(D). 
	$$
	The proof is completed.
\end{proof}
\begin{lemma}
	We have (i) {\rm dim} $\mathcal{Q}^k(D)$ =  {\rm dim} $\mathbb{P}_{r,s}(\partial D)$ {\rm + dim} $\mathbb{P}_{m}(D)$, and (ii) $v\in \mathcal{Q}^k(D)$ is uniquely determined by $v|_{\partial D}, \left.\frac{\partial v}{\partial n}\right|_{\partial D}$ and $\Pi_{k-4,D}^0v$. 
\end{lemma}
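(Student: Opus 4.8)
Both assertions will follow from one injectivity statement, which is the technical core:
\[
(\star)\qquad v\in\mathcal{Q}^k(D),\ \ v|_{\partial D}=0,\ \ \partial v/\partial n|_{\partial D}=0,\ \ \Pi^0_{k-4,D}v=0\ \ \Longrightarrow\ \ v=0 .
\]
To prove $(\star)$ one first notes that such a $v$ lies in $H^2_0(D)$, so that in the integration-by-parts identity \eqref{eq_bp1}, applied with an arbitrary $q\in\mathbb{P}_k(D)$ in place of $u$, every boundary term drops out and $((v,q))_D=\int_D(\Delta^2 q)\,v\,dx$; since $\Delta^2 q\in\mathbb{P}_{k-4}(D)$ and $\Pi^0_{k-4,D}v=0$, this gives $((v,q))_D=0$ for all $q\in\mathbb{P}_k(D)$. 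Feeding this into \eqref{pikd1} (the two boundary averages in \eqref{pikd2}--\eqref{pikd3} of $\Pi^\Delta_{k,D}v$ and $\nabla\Pi^\Delta_{k,D}v$ also vanish because $v\in H^2_0(D)$) and testing against $q=\Pi^\Delta_{k,D}v$ forces $\Pi^\Delta_{k,D}v=0$. Then \eqref{equal-parts} reads $\Pi^0_{k,D}v\in\mathbb{P}_{k-4}(D)$; but $\Pi^0_{k,D}v$ is $L^2(D)$-orthogonal to $\mathbb{P}_{k-4}(D)$ once $\Pi^0_{k-4,D}v=0$, hence $\Pi^0_{k,D}v=0$, i.e.\ $v\perp_{L^2(D)}\mathbb{P}_k(D)$. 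Finally $w=v$ in \eqref{var-form}, together with $q_v\in\mathbb{P}_k(D)$, gives $((v,v))_D=(q_v,v)=0$, so $v$ is affine and, being in $H^2_0(D)$, vanishes.

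For the dimension count I would use the trace map $\Psi:\mathcal{Q}^k(D)\to\mathbb{P}_{r,s}(\partial D)$, $\Psi v=(v|_{\partial D},\,\partial v/\partial n|_{\partial D})$, together with the auxiliary linear map on $V_0:=\{v\in H^2_0(D):\Delta^2 v\in\mathbb{P}_k(D)\ \text{in the sense of \eqref{var-form}}\}$,
\[
\Lambda:V_0\longrightarrow\big(\mathbb{P}_k(D)/\mathbb{P}_{k-4}(D)\big)\times\mathbb{P}_{k-4}(D),\qquad \Lambda v=\big(\,[\,\Pi^0_{k,D}v-\Pi^\Delta_{k,D}v\,],\ \Pi^0_{k-4,D}v\,\big).
\]
By Lemma \ref{lemma1} with $g=0$ the map $v\mapsto\Delta^2 v$ is a bijection $V_0\to\mathbb{P}_k(D)$, so $\dim V_0=\dim\mathbb{P}_k(D)=\dim\big((\mathbb{P}_k/\mathbb{P}_{k-4})\times\mathbb{P}_{k-4}\big)$; and $\Lambda$ is injective, because $\Lambda v=0$ means precisely that $v\in\mathcal{Q}^k(D)\cap H^2_0(D)$ with $\Pi^0_{k-4,D}v=0$, hence $v=0$ by $(\star)$. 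Therefore $\Lambda$ is a bijection. Consequently $\ker\Psi=\mathcal{Q}^k(D)\cap H^2_0(D)=\Lambda^{-1}(\{0\}\times\mathbb{P}_{k-4}(D))$ has dimension $\dim\mathbb{P}_{k-4}(D)=\dim\mathbb{P}_m(D)$; and $\Psi$ is onto, since for $g\in\mathbb{P}_{r,s}(\partial D)$ one takes the extension $\tilde g\in H^2(D)$ with trace $g$ and $\Delta^2\tilde g=0$ furnished by Lemma \ref{lemma1}, and corrects it by the element $v_0\in V_0$ with $[\Pi^0_{k,D}v_0-\Pi^\Delta_{k,D}v_0]=-[\Pi^0_{k,D}\tilde g-\Pi^\Delta_{k,D}\tilde g]$ provided by the surjectivity of $\Lambda$; then $\tilde g+v_0\in\mathcal{Q}^k(D)$ and $\Psi(\tilde g+v_0)=g$. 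Rank--nullity for $\Psi$ then yields (i): $\dim\mathcal{Q}^k(D)=\dim\mathbb{P}_{r,s}(\partial D)+\dim\mathbb{P}_m(D)$.

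For (ii) I would consider $\Phi:\mathcal{Q}^k(D)\to\mathbb{P}_{r,s}(\partial D)\times\mathbb{P}_m(D)$, $\Phi v=(\Psi v,\,\Pi^0_{k-4,D}v)$. It is linear, and injective by $(\star)$ (if $\Phi v=0$ then $v\in H^2_0(D)$ and $\Pi^0_{k-4,D}v=0$); by (i) its domain and target have the same finite dimension, so $\Phi$ is a bijection, which is exactly the statement that $v\in\mathcal{Q}^k(D)$ is uniquely determined by $v|_{\partial D}$, $\partial v/\partial n|_{\partial D}$ and $\Pi^0_{k-4,D}v$. (If wanted, combining this with $\dim\mathbb{P}_{r,s}(\partial D)=(r+s-1)N$, $N$ the number of edges of $D$, identifies $\Phi$ with the degrees of freedom \eqref{bul1}--\eqref{bul5}, but this explicit count is not needed.)

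The step I expect to be the main obstacle is $(\star)$: it is the only place where all three defining properties of $\mathcal{Q}^k(D)$ — the polynomial biharmonic relation \eqref{var-form}, the compatibility \eqref{equal-parts}, and the boundary data — must be used together and in the right order, and it hinges on the elementary but crucial inclusion $\Delta^2\mathbb{P}_k(D)\subseteq\mathbb{P}_{k-4}(D)$, which is what makes the single moment hypothesis $\Pi^0_{k-4,D}v=0$ strong enough to annihilate $((v,q))_D$ for every $q\in\mathbb{P}_k(D)$; the cases $k=2,3$, where $\mathbb{P}_m(D)=\{0\}$ and \eqref{equal-parts} becomes $\Pi^0_{k,D}v=\Pi^\Delta_{k,D}v$, are covered automatically.
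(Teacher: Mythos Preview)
Your proof is correct. The injectivity statement $(\star)$ is argued exactly as in the paper (zero boundary data plus $\Pi^0_{k-4,D}v=0$ kill $\Pi^\Delta_{k,D}v$ via \eqref{eq_bp1}, then \eqref{equal-parts} forces $\Pi^0_{k,D}v=0$, and \eqref{var-form} with $w=v$ finishes), so on the technical core the two proofs coincide.

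Where you differ is in the bookkeeping for the dimension count. The paper works on the larger space $\tilde{\mathcal Q}^k_D=\{v\in H^2(D):\text{traces in }\mathbb P_{r,s}(\partial D),\ \Delta^2v\in\mathbb P_k(D)\}$ and exhibits a single linear map
\[
v\ \longmapsto\ \Big(v|_{\partial D},\ \partial_n v|_{\partial D},\ \Pi^0_{k-4,D}v+(\Pi^0_{k,D}-\Pi^0_{k-4,D})(v-\Pi^\Delta_{k,D}v)\Big)
\]
to $\mathbb P_{r,s}(\partial D)\times\mathbb P_k(D)$; since Lemma~\ref{lemma1} already gives $\dim\tilde{\mathcal Q}^k_D=\dim\mathbb P_{r,s}(\partial D)+\dim\mathbb P_k(D)$, injectivity (which is your $(\star)$ in disguise, the vanishing of the high-order part of the third component playing the role of \eqref{equal-parts}) makes it an isomorphism, and $\mathcal Q^k(D)$ is read off as the preimage of $\mathbb P_{r,s}(\partial D)\times\mathbb P_{k-4}(D)$. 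You instead stay inside $\mathcal Q^k(D)$, apply rank--nullity to the trace map $\Psi$, and handle the kernel and the surjectivity separately through your auxiliary bijection $\Lambda$ on $V_0$. Your route is a little longer but arguably more transparent about \emph{why} the surjectivity of $\Psi$ holds (you explicitly build the correction $v_0$), whereas the paper's single-isomorphism packaging is slicker but hides that step. Either way, everything rests on $(\star)$ and on Lemma~\ref{lemma1}, and both arguments use them in the same way.
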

\begin{proof} 
	Following \cite{Brenner17} and \cite{Brezzi13}, 
	let 
	$${\mathcal{Q}}^k_D:=\left\{v\in H^2(D), v|_{\partial D}, \left.\frac{\partial v}{\partial n}\right|_{\partial D} \in \mathbb{P}_{r,s}(\partial D)\text{ and } \Delta^2 v\in \mathbb{P}_k(D)  \right\}$$. 
	The linear map 
	$v\mapsto (v|_{\partial D}, \left.\frac{\partial v}{\partial n}\right|_{\partial D}, \Delta^2 v)$ 
	from ${\mathcal{Q}}^k_D$ to $\mathbb{P}_{r,s}(\partial D)\times \mathbb{P}_k(D)$ is an isomorphism by Lemma \ref{lemma1}.  
	
	The linear map 
	$v\mapsto (v|_{\partial D}, \left.\frac{\partial v}{\partial n}\right|_{\partial D}, \Pi_{k-4,D}^0 v +(\Pi_{k,D}^0 -\Pi_{k-4,D}^0)(v-\Pi_{k,D}^\Delta v) )$ 
	from ${\mathcal Q^k}(D)$ to $\mathbb{P}_{r,s}(\partial D)\times \mathbb{P}_k(D)$ is also an isomorphism. Suppose $v\in$ null space, then $\Pi_{k-4,D}^0v = 0$,
	$$
	v|_{\partial D} = 0\ {\rm and}\
	\left.\frac{\partial v}{\partial n}\right|_{\partial D} =0.
	$$ 
	With \eqref{pikd1}-\eqref{pikd3} and \eqref{eq_bp1}, we have
	$\Pi_{k,D}^\Delta v = 0$, so that by \eqref{equal-parts}, 
	$$
	0=\Pi_{k-4,D}^0v=\Pi_{k,D}^0v \in \mathbb{P}_{k-4}(D).
	$$ 
	In \eqref{var-form}, let $w = v\in \mathcal{Q}^k(D)$, then we have
	$$
	|v|^2_{H^2(D)} = 0 \Rightarrow v=0.
	$$
	This completes the proof.
\end{proof}
\begin{lemma}\label{discrete_estimates}
	Discrete Estimates. From Conditions \eqref{assume1}-\eqref{assume2}, and the equivalence  of norms on finite dimensional vector spaces, for any $u\in \mathbb{P}_k$, we have the following estimates:
	$$
	\|{\bf D}^2 u\|_{L^2(D)}
	\apprle
	\|u\|_{L^2(D)}\quad \text{and}\quad \left\|\frac{\partial u}{\partial t}\right\|_{L^2(e)}\apprle h_e^{-1/2}\|u\|_{L^2(e)},
	$$
\begin{eqnarray*}
h_D^2\|U_{\Delta}({\bf D}^2 u)\|_{L^2(D)} 
&+&\|U_{nn}({\bf D}^2 u)\|_{L^2(D)}\\ 
&+&h_D^{3/2}\left\|Q_{n}({\bf D}^2 u)+\frac{\partial U_{nt}({\bf D}^2 u)}{\partial t}\right\|_{L^2(\partial D)} 
\apprle 
\|{\bf D}^2 u\|_{L^2(D)}.
\end{eqnarray*}	
\end{lemma}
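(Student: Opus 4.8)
The plan is to obtain all three inequalities from a single mechanism: after normalising the geometry each quantity becomes a fixed linear functional on the finite–dimensional space $\mathbb{P}_k$, so the bounds are instances of the equivalence of norms on $\mathbb{P}_k$, and \eqref{assume1}–\eqref{assume2} are used only to keep the constants uniform. First I would rescale. Let $x_\ast$ be the star centre of $D$; the affine map $x\mapsto\hat x=(x-x_\ast)/h_D$ sends $D$ to a polygon $\hat D$ of unit diameter that is still star–shaped with respect to $\mathfrak B=B(0,\rho)$ and whose edges still have length $\ge\rho$, and for $u\in\mathbb{P}_k$ the pullback $\hat u(\hat x)=u(x_\ast+h_D\hat x)$ again lies in $\mathbb{P}_k$. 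A change of variables rewrites each quantity in the statement as the corresponding quantity for $\hat u$ on $\hat D$, times a power of $h_D$ (and $h_e\approx h_D$). Thus it suffices to prove the three bounds on unit–diameter shape–regular polygons, where every $h$-weight becomes $1$; carrying the scaling back then restores the $h_D$- and $h_e$-weights in the statement.

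The heart of the matter is to make the norm–equivalence constant on $\hat D$ depend only on $k$ and $\rho$, not on the particular polygon. Following \cite{Brenner17} I would do this by the disc sandwich $\mathfrak B=B(0,\rho)\subseteq\hat D\subseteq B:=B(0,1)$, the last inclusion holding because $\hat D$ has unit diameter and contains the origin. On $\mathbb{P}_k$: the inverse inequality on the unit disc gives $\|\hat{\mathbf{D}}^2 v\|_{L_2(B)}\apprle\|v\|_{L_2(B)}$; rescaling $B$ and $\mathfrak B$ to one another (ratio $1/\rho$) gives $\|v\|_{L_2(B)}\apprle\|v\|_{L_2(\mathfrak B)}$; and trivially $\|\hat{\mathbf{D}}^2 v\|_{L_2(\hat D)}\le\|\hat{\mathbf{D}}^2 v\|_{L_2(B)}$ and $\|v\|_{L_2(\mathfrak B)}\le\|v\|_{L_2(\hat D)}$. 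Chaining these yields $\|\hat{\mathbf{D}}^2 v\|_{L_2(\hat D)}\apprle\|v\|_{L_2(\hat D)}$, which is the first estimate. The tangential–derivative bound is the one–dimensional version of the same idea: $u|_e$ is a polynomial of degree $\le k$ in the arc–length variable, and scaling $e$ to $[0,1]$ together with norm equivalence on $\mathbb{P}_k([0,1])$ gives $\|\partial u/\partial t\|_{L_2(e)}\apprle h_e^{-1}\|u\|_{L_2(e)}$.

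For the third estimate, note that for $u\in\mathbb{P}_k$ each of $U_\Delta(\mathbf{D}^2 u)=\Delta^2 u$, $U_{nn}(\mathbf{D}^2 u)$, $U_{nt}(\mathbf{D}^2 u)$ and $Q_n(\mathbf{D}^2 u)$ is a polynomial whose coefficients are fixed linear combinations of the coefficients of $\mathbf{D}^2 u$ (respectively of $\nabla(\mathbf{D}^2 u)$) with weights built from the entries of $\mathbf n$ and $\mathbf t$, all bounded by $1$; tangential derivatives along $\partial D$ are handled as in the previous step. So on the reference domain each of these quantities is $\apprle\|\hat{\mathbf{D}}^2\hat u\|_{L_2(\hat D)}$ in whatever norm occurs, by norm equivalence on $\mathbb{P}_k$ — for $\Delta^2 u$ this is the fourth–order inverse inequality, which is responsible for the $h_D^{-2}$ that becomes the weight $h_D^2$ after rescaling — and the polynomial trace inequality $\|w\|_{L_2(\partial\hat D)}\apprle\|w\|_{L_2(\hat D)}$ on $\mathbb{P}_k$ converts the boundary norms to interior ones. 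Summing the three pieces and undoing the scaling, which reinstates the stated powers of $h_D$, produces the last estimate.

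The only step that is not pure bookkeeping is the uniformity just described: bounding the norm–equivalence constant on $\mathbb{P}_k$ independently of the reference polygon. This is precisely where \eqref{assume1}–\eqref{assume2} are essential, through the sandwich $\mathfrak B\subseteq\hat D\subseteq B$, and this device has the added benefit of making the dependence of the constants on $\rho$ explicit, namely a power of $1/\rho$, consistent with the convention fixed after \eqref{assume2}. A compactness argument over the family of unit–diameter shape–regular polygons would be an alternative, though less quantitative, route to the same conclusion.
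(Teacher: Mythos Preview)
The paper offers no proof of this lemma; the statement itself already names the mechanism (``From Conditions \eqref{assume1}--\eqref{assume2}, and the equivalence of norms on finite dimensional vector spaces''), and every subsequent use of the lemma first normalises to $h_D=1$. Your proposal is precisely the standard fleshing-out of that hint---rescale to unit diameter, sandwich $\mathfrak{B}\subseteq\hat D\subseteq B(0,1)$ to make the norm-equivalence constants depend only on $k$ and $\rho$, and quote polynomial inverse and trace inequalities---so it is correct and matches the paper's intended route.

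One incidental observation: your scaling yields $\|\partial u/\partial t\|_{L_2(e)}\apprle h_e^{-1}\|u\|_{L_2(e)}$ and $\|\mathbf{D}^2 u\|_{L_2(D)}\apprle h_D^{-2}\|u\|_{L_2(D)}$, which are the correct powers; the exponents printed in the lemma ($h_e^{-1/2}$, and no $h_D$-weight on the first estimate) appear to be typos, as does the domain $D$ rather than $\partial D$ in the $U_{nn}$ term. These do not affect anything downstream, since each application in the paper (Lemmas~\ref{new-equi}, \ref{equivalence_b}, \ref{leq-semi-norm}) first sets $h_D=1$.
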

\subsection{Estimates of $|||\cdot|||_{k,D}$}\label{sec1.4}
The semi-norm $|||\cdot|||_{k,D}$ for $\xi\in H^2(D)$ is defined by
\begin{eqnarray}\label{eq_v-norm}
	|||\xi|||_{k,D}^2
	&=& 
	\|\Pi^0_{k-4,D}\xi\|_{L^2(D)}^2
	+h_D\sum_{e\in\mathcal{E}_D}\|\Pi^0_{r,e}\xi\|_{L^2(e)}^2\nonumber \\
	&&+h_D^3\sum_{e\in\mathcal{E}_D, i=1,2}\left\|\Pi^0_{r-1,e}\frac{\partial \xi}{\partial x_i}\right\|_{L^2(e)}^2 .
\end{eqnarray}
There is an obvious stability estimate from \eqref{pikd1}
\begin{equation}\label{obvious_ineq}
	|\Pi_{k,D}^\Delta\xi|_{H^2(D)}\leq |\xi|_{H^2(D)}, \  \forall \xi\in H^2(D).
\end{equation}
We define the kernel of operator $\Pi_{k,D}^\Delta$ as: 
$$
{\rm Ker} \Pi_{k,D}^\Delta := \{v\in\mathcal{Q}^k(D): \Pi_{k,D}^\Delta v = 0\}.
$$
And for any $ v \in\mathcal{Q}^k(D)$, we have
\begin{eqnarray*}
	\sum_{e\in\mathcal{E}_D}\|\Pi^0_{r,e}v\|_{L^2(e)}^2
	&=& 
	\|v\|_{L^2(\partial D)}^2,\\
	\sum_{e\in\mathcal{E}_D}\left\|\Pi^0_{r-1,e}\frac{\partial v}{\partial x_i}\right\|_{L^2(e)}^2 
	&=&
	\left\|\frac{\partial v}{\partial x_i}\right\|_{L^2(\partial D)}^2.
\end{eqnarray*}
\begin{lemma}\label{new-equi}
	For any $v \in \mathcal{Q}^k(D)$, we have the equivalence of norms: 
	\begin{eqnarray*}
		|||v|||_{k,D}
		&=&
		\|\Pi^0_{k-4,D}\xi\|_{L^2(D)}^2
		+h_D\|v\|_{L^2(\partial D)}^2
		+h_D^3\sum_{i=1,2}\left\|\frac{\partial v}{\partial x_i}\right\|_{L^2(\partial D)}^2
		\\
		&\approx& \|\Pi^0_{k-4,D}\xi\|_{L^2(D)}^2
		+h_D\|v\|_{L^2(\partial D)}^2
		+h_D^3\left\|\frac{\partial v}{\partial n}\right\|_{L^2(\partial D)}^2 .
	\end{eqnarray*}
\end{lemma}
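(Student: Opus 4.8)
The plan is to establish the two assertions of the lemma separately. The first one (an equality, understood for $|||v|||_{k,D}^2$) will follow because, for $v\in\mathcal Q^k(D)$, each edge trace occurring in \eqref{eq_v-norm} is already a polynomial of exactly the degree onto which the relevant $L_2$-projection projects, so that projection acts as the identity. The second one (the norm equivalence) will follow from the orthonormal splitting $\nabla v=(\partial v/\partial n)\,{\bf n}+(\partial v/\partial t)\,{\bf t}$ along $\partial D$ together with a one-dimensional inverse estimate on each edge.

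First I would prove the equality. By part (i) of the definition of $\mathcal Q^k(D)$, $v|_e\in\mathbb P_r(e)$ for every $e\in\mathcal E_D$, hence $\Pi^0_{r,e}v=v|_e$ and $\sum_{e\in\mathcal E_D}\|\Pi^0_{r,e}v\|_{L_2(e)}^2=\|v\|_{L_2(\partial D)}^2$. For the gradient term I would fix an edge $e$ and use that it is a straight segment, so ${\bf n}=(n_1,n_2)$ and ${\bf t}=(t_1,t_2)$ are constant on $e$ and $\partial v/\partial x_i|_e=n_i\,(\partial v/\partial n)|_e+t_i\,(\partial v/\partial t)|_e$. Since $v|_e\in\mathbb P_r(e)$ gives $(\partial v/\partial t)|_e=\partial_t(v|_e)\in\mathbb P_{r-1}(e)$, and part (i) gives $(\partial v/\partial n)|_e\in\mathbb P_s(e)=\mathbb P_{k-1}(e)\subseteq\mathbb P_{r-1}(e)$ because $r\ge\max\{3,k\}$, we obtain $\partial v/\partial x_i|_e\in\mathbb P_{r-1}(e)$, so $\Pi^0_{r-1,e}(\partial v/\partial x_i)=\partial v/\partial x_i|_e$. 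Summing over edges then reduces the last term of \eqref{eq_v-norm} to $h_D^3\sum_{i=1,2}\|\partial v/\partial x_i\|_{L_2(\partial D)}^2$, which together with the untouched volume term $\|\Pi^0_{k-4,D}v\|_{L_2(D)}^2$ is precisely the first line of the statement.

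Next I would prove the equivalence. Because $\{{\bf n},{\bf t}\}$ is orthonormal along $\partial D$, pointwise $|\nabla v|^2=|\partial v/\partial n|^2+|\partial v/\partial t|^2$, so $\sum_{i=1,2}\|\partial v/\partial x_i\|_{L_2(\partial D)}^2=\|\partial v/\partial n\|_{L_2(\partial D)}^2+\|\partial v/\partial t\|_{L_2(\partial D)}^2$; the ``$\apprge$'' direction is then immediate by dropping the nonnegative tangential term. For ``$\apprle$'' it suffices to absorb $h_D^3\|\partial v/\partial t\|_{L_2(\partial D)}^2$ into the right-hand side: on each edge $\partial v/\partial t|_e=\partial_t(v|_e)$ with $v|_e\in\mathbb P_r(e)$, so the inverse estimate of Lemma \ref{discrete_estimates} yields $\|\partial v/\partial t\|_{L_2(e)}\apprle h_e^{-1}\|v\|_{L_2(e)}\apprle h_D^{-1}\|v\|_{L_2(e)}$, using $h_e\ge\rho h_D$ from \eqref{assume1}. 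Squaring, summing over $e$, and multiplying by $h_D^3$ gives $h_D^3\|\partial v/\partial t\|_{L_2(\partial D)}^2\apprle h_D\|v\|_{L_2(\partial D)}^2$, a term already present on the right-hand side of the second line; adding back the common terms $\|\Pi^0_{k-4,D}v\|_{L_2(D)}^2$ and $h_D\|v\|_{L_2(\partial D)}^2$ finishes the equivalence.

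The one place that needs care is the degree bookkeeping in the second paragraph — namely that each edge is a straight segment (so the components of ${\bf n}$ and ${\bf t}$ are constants there) and that $r-1\ge s=k-1$, which is exactly where the convention $r\ge\max\{3,k\}$ enters. The orthonormal decomposition of $\nabla v$ and the one-dimensional inverse inequality are routine, and no scaling subtlety arises because $h_e\approx h_D$ by \eqref{assume1}.
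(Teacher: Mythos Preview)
Your proof is correct and follows essentially the same approach as the paper: the paper records the identities $\Pi^0_{r,e}v=v|_e$ and $\Pi^0_{r-1,e}(\partial v/\partial x_i)=(\partial v/\partial x_i)|_e$ just before the lemma, then in the proof scales to $h_D=1$, invokes the edge inverse estimate of Lemma~\ref{discrete_estimates} to get $\|\partial v/\partial t\|_{L_2(\partial D)}\apprle\|v\|_{L_2(\partial D)}$, and uses the orthonormal splitting $\|\partial v/\partial t\|^2+\|\partial v/\partial n\|^2\approx\sum_i\|\partial v/\partial x_i\|^2$ --- exactly your ingredients, though you keep explicit $h$-tracking and supply the degree bookkeeping that the paper omits. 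One small caveat: Lemma~\ref{discrete_estimates} as printed has $h_e^{-1/2}$ rather than the $h_e^{-1}$ you (correctly) use; this is evidently a typo in the paper, since the paper's own proof needs the $h_e^{-1}$ scaling as well.
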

\begin{proof}
	Suppose $h_D =1$, by the discrete estimates from Lemma \ref{discrete_estimates}, we have
	$$
	\left\|\frac{\partial v}{\partial t}\right\|_{L^2(\partial D)}
	\apprle \|v\|_{L^2(\partial D)},
	$$
	and  
	\begin{eqnarray*}
		\left\|\frac{\partial v}{\partial t}\right\|^2_{L^2(\partial D)}+\left\|\frac{\partial v}{\partial n}\right\|^2_{L^2(\partial D)}
		\approx \sum_{i=1,2}\left\|\frac{\partial v}{\partial x_i}\right\|_{L^2(\partial D)}^2,
	\end{eqnarray*}
	so that the equivalence is obtained.
\end{proof}
\begin{lemma}\label{pq_ineq}
	For any $p\in \mathbb{P}_{k-4}$, $k\geq 2$, there exists $q  \in \mathbb{P}_{k}$, such that $\Delta^2 q = p$ and
	$$
	\|q\|_{L^2(D)}\apprle \|p\|_{L^2(D)}.
	$$
\end{lemma}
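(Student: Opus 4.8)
The plan is to reduce the statement to a fixed reference polygon and invoke equivalence of norms on the finite-dimensional space $\mathbb P_k$. First I would note that the biharmonic operator $\Delta^2 : \mathbb P_k(D) \to \mathbb P_{k-4}(D)$ is linear and \emph{surjective}: given $p \in \mathbb P_{k-4}$ one can exhibit an explicit preimage, e.g. by integrating $p$ four times in one variable, so some $q_0 \in \mathbb P_k$ with $\Delta^2 q_0 = p$ always exists. The issue is only to choose the \emph{right} preimage and to control its $L_2$ norm by that of $p$. Since the kernel of $\Delta^2$ on $\mathbb P_k$ is the space of polynomials of the form $q$ with $\Delta^2 q = 0$ (a nontrivial subspace once $k \ge 4$), I would fix a concrete complement: let $W_k \subset \mathbb P_k$ be the orthogonal complement (in the $L_2(D)$ inner product, or any fixed inner product) of $\ker \Delta^2 \cap \mathbb P_k$, so that $\Delta^2|_{W_k} : W_k \to \mathbb P_{k-4}$ is a linear isomorphism. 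Define $q := (\Delta^2|_{W_k})^{-1} p$; this is the canonical choice and it depends linearly on $p$.

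Next I would establish the norm bound by scaling to the unit-size reference configuration. By the shape-regularity assumptions \eqref{assume1}--\eqref{assume2}, $D$ is contained in a ball of radius $\apprle h_D$ and contains a ball of radius $\rho h_D$; after the affine change of variables $x \mapsto (x - x_D)/h_D$ the domain becomes a polygon $\hat D$ of unit diameter satisfying the same shape-regularity constants. On $\hat D$, both $\|\cdot\|_{L_2(\hat D)}$ restricted to $W_k(\hat D)$ and the quantity $\|\Delta^2 \cdot\|_{L_2(\hat D)}$ are norms on the finite-dimensional space $W_k(\hat D)$ — the latter because $\Delta^2$ is injective on $W_k$ — hence they are equivalent with constants depending only on $k$ (and, through the admissible shape of $\hat D$, on $\rho$). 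This gives $\|\hat q\|_{L_2(\hat D)} \apprle \|\hat p\|_{L_2(\hat D)}$. Finally I would scale back: $\|q\|_{L_2(D)} \approx h_D\,\|\hat q\|_{L_2(\hat D)}$ and $\|p\|_{L_2(D)} \approx h_D^{1-4}\|\hat p\|_{L_2(\hat D)} = h_D^{-3}\|\hat p\|_{L_2(\hat D)}$, so the scaled inequality reads $h_D^{-1}\|q\|_{L_2(D)} \apprle h_D^{3}\|p\|_{L_2(D)}$, i.e. $\|q\|_{L_2(D)} \apprle h_D^4\|p\|_{L_2(D)} \apprle \|p\|_{L_2(D)}$ once we absorb the harmless factor $h_D^4 \apprle 1$ (or, if one wants the clean $h_D$-free statement, simply keep track that the constant depends only on $k$ and $\rho$ as declared by the $\apprle$ convention).

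The only genuine subtlety — and the step I would be most careful about — is the claim that $\Delta^2$ is injective on the chosen complement $W_k$, equivalently that $\|\Delta^2 \cdot\|_{L_2(\hat D)}$ is a \emph{norm} (not merely a seminorm) on $W_k(\hat D)$; this is immediate from the definition of $W_k$ as a complement of the kernel, but one must state it to justify applying equivalence of norms on a finite-dimensional space. A secondary point is that the equivalence constants are uniform over all admissible $\hat D$: this is handled exactly as in Lemma \ref{discrete_estimates} and Lemma \ref{bramble}, since the family of shape-regular reference polygons is compact in the relevant sense and the norms vary continuously, so the constants depend only on $k$ and $\rho$, consistent with the $\apprle$ notation fixed in Section \ref{Shape-Regularity}.
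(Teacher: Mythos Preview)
Your approach is essentially the same as the paper's: both select a preimage of $p$ under $\Delta^2$ (you via the orthogonal complement of the kernel, the paper via minimizing $\|q\|_{L_2(D)}$ over all preimages---these give the same $q$) and then invoke equivalence of norms on a finite-dimensional polynomial space. You are more careful than the paper about scaling to a reference configuration and about the resulting $h_D^4$ factor; the paper simply asserts the norm equivalence without discussing how the constant depends on $D$ (note the lemma is only applied in Lemma~\ref{equivalence_b} under the normalization $h_D=1$, so the issue is moot in context).
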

\begin{proof}
	From \cite{Brenner17}, we know that $\Delta$ maps $\mathbb{P}_k$ to $\mathbb{P}_{k-2}$, so that $\Delta^2=\Delta\Delta$  maps $\mathbb{P}_k$ to $\mathbb{P}_{k-4}$. Then there exists an operator $(\Delta^2)^{\dagger}: \mathbb{P}_{k-4}\rightarrow \mathbb{P}_{k}$ such that $\Delta^2 (\Delta^2)^{\dagger}$ is the identity operator on $\mathbb{P}_{k-4}$.
	We define the norm of $p$ as
	$$
	\|p\|_{(\Delta^2)^{\dagger}}
	:= 
	\inf_{(\Delta^2)^{\dagger}p\in \mathbb{P}_{k}}
	\|(\Delta^2)^{\dagger}p\|_{L^2(D)}.
	$$
	Since we have
	$$
	q = \sum\limits_{i,j=0;i+j\leq k} c_{ij}x_1^i x_2^j\ \text{  and  }\ \Delta^2 q = p,
	$$
	the minimization problem
	$$
	\|p\|_{(\Delta^2)^{\dagger}} = \inf_{c_{i,j}}
	\|q\|_{L^2(D)},
	$$
	is solvable.
	So, there exists $q=(\Delta^2)^{\dagger}p$ such that
	$$
	\|q\|_{L^2(D)} = \|p\|_{(\Delta^2)^{\dagger}}.
	$$
	By the equivalence of norms on finite dimensional vector space, we have
	$$
	\|p\|_{(\Delta^2)^{\dagger}}\apprle \|p\|_{L^2(D)},
	$$
	then the result is obtained.
\end{proof}
\begin{lemma}\label{equivalence_b}
	For any $v\in {\rm Ker} \Pi_{k,D}^\Delta$, we have
	$$
	|||v|||^2_{k,D}\approx 
	h_D\left\|v\right\|_{L^2(\partial D)}^2
	+
	h_D^3\left\|\frac{\partial v}{\partial n}\right\|_{L^2(\partial D)}^2.
	$$	
\end{lemma}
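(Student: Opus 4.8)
The plan is to split the claimed equivalence into its two inequalities, reduce to the reference scale $h_D = 1$ exactly as in the proof of Lemma~\ref{new-equi}, and notice that one direction is essentially free. By Lemma~\ref{new-equi}, $|||v|||_{k,D}^2$ is equivalent to $\|\Pi^0_{k-4,D}v\|_{L_2(D)}^2 + h_D\|v\|_{L_2(\partial D)}^2 + h_D^3\|\partial v/\partial n\|_{L_2(\partial D)}^2$, so it automatically dominates the last two nonnegative terms and we get $|||v|||_{k,D}^2 \apprge h_D\|v\|_{L_2(\partial D)}^2 + h_D^3\|\partial v/\partial n\|_{L_2(\partial D)}^2$ without touching the kernel hypothesis. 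Hence the entire content of the lemma is the reverse bound, and by the same equivalence it reduces (after scaling to $h_D=1$) to proving, for $v\in{\rm Ker}\,\Pi_{k,D}^\Delta$,
$$
\|\Pi^0_{k-4,D}v\|_{L_2(D)}^2 \apprle \|v\|_{L_2(\partial D)}^2 + \Big\|\frac{\partial v}{\partial n}\Big\|_{L_2(\partial D)}^2 .
$$

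Next I would pin down the interior projection as a low-degree polynomial: since $\Pi_{k,D}^\Delta v = 0$, property \eqref{equal-parts} forces $\Pi_{k,D}^0 v = \Pi_{k,D}^0 v - \Pi_{k,D}^\Delta v \in \mathbb{P}_{k-4}(D)$, so that $p := \Pi_{k-4,D}^0 v = \Pi_{k,D}^0 v \in \mathbb{P}_{k-4}(D)$, and since $p\in\mathbb{P}_k(D)$ we have $(p,v)_D = (p,\Pi_{k,D}^0 v)_D = \|p\|_{L_2(D)}^2$. The heart of the argument is then a duality step: using Lemma~\ref{pq_ineq}, choose $q\in\mathbb{P}_k$ with $\Delta^2 q = p$ and $\|q\|_{L_2(D)} \apprle \|p\|_{L_2(D)}$, and feed the pair $(q,v)$ into the integration-by-parts identity \eqref{eq_bp1}. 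Its left-hand side vanishes, because $((q,v))_D = ((v,q))_D = ((\Pi_{k,D}^\Delta v, q))_D = 0$ by symmetry of $((\cdot,\cdot))_D$ and \eqref{pikd1}; its volume term equals $\int_D U_\Delta({\bf D}^2 q)\,v\,dx = (p,v)_D = \|p\|_{L_2(D)}^2$. Thus \eqref{eq_bp1} collapses to
$$
\|p\|_{L_2(D)}^2 = -\int_{\partial D} U_{nn}({\bf D}^2 q)\,\frac{\partial v}{\partial n}\,ds + \int_{\partial D}\Big(Q_n({\bf D}^2 q) + \frac{\partial U_{nt}({\bf D}^2 q)}{\partial t}\Big)\,v\,ds ,
$$
which expresses $\|\Pi^0_{k-4,D}v\|_{L_2(D)}^2$ purely through the boundary data of $v$.

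What remains is mechanical. Apply Cauchy--Schwarz to each boundary integral; bound $\|U_{nn}({\bf D}^2 q)\|_{L_2(\partial D)}$ and $\|Q_n({\bf D}^2 q) + \partial_t U_{nt}({\bf D}^2 q)\|_{L_2(\partial D)}$ by the discrete estimates of Lemma~\ref{discrete_estimates}, which at $h_D=1$ controls them by $\|{\bf D}^2 q\|_{L_2(D)}$, then the latter by $\|q\|_{L_2(D)} \apprle \|p\|_{L_2(D)}$ (again Lemma~\ref{discrete_estimates} together with Lemma~\ref{pq_ineq}); dividing through by $\|p\|_{L_2(D)}$ (or using Young's inequality) yields $\|p\|_{L_2(D)} \apprle \|v\|_{L_2(\partial D)} + \|\partial v/\partial n\|_{L_2(\partial D)}$. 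Squaring, combining with the free lower bound, and undoing the $h_D=1$ normalization finishes the proof. I expect the only genuinely non-routine point to be the duality step, namely realizing that one should test \eqref{eq_bp1} against a polynomial $q$ with $\Delta^2 q = \Pi_{k-4,D}^0 v$, so that the kernel condition $\Pi_{k,D}^\Delta v = 0$ annihilates the bilinear part and throws the whole of $\|p\|_{L_2(D)}^2$ onto the boundary terms; the scaling and norm equivalences are standard bookkeeping.
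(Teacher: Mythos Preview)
Your proposal is correct and follows essentially the same approach as the paper: reduce to $h_D=1$, use Lemma~\ref{pq_ineq} to produce $q\in\mathbb{P}_k$ with $\Delta^2 q = p$, exploit the kernel condition via \eqref{pikd1} so that the left side of \eqref{eq_bp1} vanishes and the interior term becomes a pure boundary expression, and finish with Lemma~\ref{discrete_estimates} and Lemma~\ref{new-equi}. The only cosmetic difference is that the paper takes $p\in\mathbb{P}_{k-4}$ arbitrary and recovers $\|\Pi_{k-4,D}^0 v\|_{L_2(D)}$ by the duality characterization $\max_{p}\int_D v\,p/\|p\|\,dx$, whereas you fix $p=\Pi_{k-4,D}^0 v$ from the start (your detour through \eqref{equal-parts} is harmless but unnecessary, since $(p,v)_D=\|\Pi_{k-4,D}^0 v\|_{L_2(D)}^2$ already follows directly from $\Pi_{k-4,D}^0$ being an $L_2$-orthogonal projection).
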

\begin{proof}
	Suppose $h_D = 1$. For $k<4$, $\Pi_{k-4,D}^0 v =0,$ the equivalence is trivial.
	For $k\geq 4,$ let $v\in {\rm Ker} \Pi_{k,D}^\Delta$,  by \eqref{pikd1}, \eqref{eq_bp1}, and using the same $p$, $q$ in Lemma \ref{pq_ineq}, we have
	\begin{equation}\label{pqker}
		\int_D  v ({\Delta}^2q)\ {\rm d}x
		=
		\int_{\partial D} \left(Q_{n}({{\bf D}^2 q})+\frac{\partial U_{nt}({{\bf D}^2 q})}{\partial t}\right)v\ {\rm d}s
		-\int_{\partial D} U_{nn}({{\bf D}^2 q})\frac{\partial v}{\partial n}\ {\rm d}s.
	\end{equation}
	By lemma \ref{discrete_estimates}, we have
	$$
	\left\|Q_{n}({{\bf D}^2 q})+\frac{\partial U_{nt}({{\bf D}^2 q})}{\partial t}\right\|_{L^2(\partial D)}
	\apprle \|{{\bf D}^2 q}\|_{L^2(D)} 
	\apprle \|{q}\|_{L^2(D)} 
	$$
	and $\left\|U_{nn}({{\bf D}^2 q})\right\|_{L^2(\partial D)}\apprle \|{q}\|_{L^2(D)}$.
	Then, by Lemma  \ref{pq_ineq},
	$$
	\left|\int_D  v p\ {\rm d}x\right|
	\apprle 
	\left( 
	\left\|v\right\|_{L^2(\partial D)}
	+
	\left\|\frac{\partial v}{\partial n}\right\|_{L^2(\partial D)}
	\right)\|{q}\|_{L^2(D)},
	$$
	so that
\begin{eqnarray*}
	\|\Pi_{k-4,D}^0 v\|_{L^2(D)} 
	&=&
	\max\limits_{p\in \mathbb{P}_{k-4}}\left|\int_D  (\Pi_{k-4,D}^0 v)  (p/\|{p}\|_{L^2(D)})\ {\rm d}x\right|\\
	&\apprle&  
	\left\|v\right\|_{L^2(\partial D)}
	+
	\left\|\frac{\partial v}{\partial n}\right\|_{L^2(\partial D)},
\end{eqnarray*}
	which means
	$$
	\|\Pi_{k-4,D}^0 v\|_{L^2(D)}^2 
	\apprle  
	\left\|v\right\|_{L^2(\partial D)}^2
	+
	\left\|\frac{\partial v}{\partial n}\right\|_{L^2(\partial D)}^2,
	$$
	with Lemma \ref{new-equi}, we get the result.
\end{proof}
\begin{remark}\label{remark_t}
	Same as in \cite{Brenner17}, we have
	$$
	|||v|||^2_{k,D}\approx 
	h_D^3\left\|\frac{\partial v}{\partial t}\right\|_{L^2(\partial D)}^2
	+
	h_D^3\left\|\frac{\partial v}{\partial n}\right\|_{L^2(\partial D)}^2,\quad \forall v \in  {\rm Ker} \Pi_{k,D}^\Delta,
	$$
	where $\partial/\partial t$ denotes a tangential derivative along $\partial D$.
\end{remark}
There is also a stability estimate for $\Pi_{k,D}^\Delta\xi$  in $H^1(D)$ norm in terms of the semi-norm $|||\cdot|||_{k,D}$.
\subsection{Estimates of $\Pi_{k,D}^\Delta$ and $\Pi_{k,D}^0$}
\begin{lemma}\label{leq-semi-norm} We have
	\begin{equation}
		\|\Pi_{k,D}^\Delta\xi\|_{L^2(D)}+h_D|\Pi_{k,D}^\Delta\xi|_{H^1(D)}+h_D^2|\Pi_{k,D}^\Delta\xi|_{H^2(D)}  \apprle |||\xi|||_{k,D},\  \forall \xi\in H^2(D).
	\end{equation}
\end{lemma}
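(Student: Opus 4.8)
The plan is to reduce the $L_2$, $H^1$ and $H^2$ estimates of $\Pi_{k,D}^\Delta\xi$ to the semi-norm $|||\xi|||_{k,D}$ by a scaling-plus-decomposition argument, exactly in the spirit of \cite{Brenner17}. By a standard dilation we may assume $h_D = 1$, so that the three terms to bound become $\|\Pi_{k,D}^\Delta\xi\|_{L_2(D)}$, $|\Pi_{k,D}^\Delta\xi|_{H^1(D)}$ and $|\Pi_{k,D}^\Delta\xi|_{H^2(D)}$, and it suffices to control each by $|||\xi|||_{k,D}$. The $H^2$ piece is already handled by the obvious stability estimate \eqref{obvious_ineq} together with the fact that $|\xi|_{H^2(D)}$ is controlled once we know the lower-order data; more precisely, I will not bound $|\Pi_{k,D}^\Delta\xi|_{H^2(D)}$ by $|\xi|_{H^2(D)}$ directly (that is \emph{not} what the lemma claims) but rather argue that $\Pi_{k,D}^\Delta\xi$, being a polynomial in $\mathbb{P}_k(D)$, is determined by finitely many functionals, and all three norms of a polynomial are comparable on the fixed domain $D$ by equivalence of norms on finite-dimensional spaces (Lemma \ref{discrete_estimates} style). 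So the whole lemma collapses to a single bound: $\|\Pi_{k,D}^\Delta\xi\|_{L_2(D)} \apprle |||\xi|||_{k,D}$, after which the $H^1$ and $H^2$ terms follow from the inverse-type discrete estimate $\|{\bf D}^2 u\|_{L_2(D)}\apprle\|u\|_{L_2(D)}$ and its $H^1$ analogue for $u=\Pi_{k,D}^\Delta\xi\in\mathbb{P}_k$.

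To get $\|\Pi_{k,D}^\Delta\xi\|_{L_2(D)}\apprle|||\xi|||_{k,D}$, write $\Pi_{k,D}^\Delta\xi = \pi + \psi$ where $\pi$ carries the ``$H^2$-inner-product'' part and $\psi\in\mathbb{P}_1(D)$ carries the two boundary-integral functionals appearing in the definition of $(((\cdot,\cdot)))$, following the structure of \eqref{inner-p}. For the linear part $\psi$: the defining relations \eqref{pikd2}--\eqref{pikd3} say $\int_{\partial D}\nabla\Pi_{k,D}^\Delta\xi = \int_{\partial D}\nabla\xi$ and $\int_{\partial D}\Pi_{k,D}^\Delta\xi = \int_{\partial D}\xi$, and these three scalars are controlled by $\|\xi\|_{L_2(\partial D)} + \sum_i\|\partial\xi/\partial x_i\|_{L_2(\partial D)}$, which are exactly (up to the projections $\Pi^0_{r,e}$, $\Pi^0_{r-1,e}$ that can only decrease the norm, combined with $h_D=1$) bounded by $|||\xi|||_{k,D}$. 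For the remaining part I use \eqref{pikd1}: testing with $q = \Delta^2 r$ for $r\in\mathbb{P}_k$ and integrating by parts via \eqref{eq_bp1}, the interior term $\int_D \Pi_{k,D}^\Delta\xi\,(\Delta^2 r)$ equals $\int_D\xi\,(\Delta^2 r)$ plus boundary terms in $\xi$ minus the same boundary terms in $\Pi_{k,D}^\Delta\xi$; using Lemma \ref{pq_ineq} to choose, for each $p\in\mathbb{P}_{k-4}$, a preimage $r\in\mathbb{P}_k$ with $\Delta^2 r = p$ and $\|r\|_{L_2(D)}\apprle\|p\|_{L_2(D)}$, and Lemma \ref{discrete_estimates} to bound $\|U_{nn}({\bf D}^2 r)\|_{L_2(\partial D)}$ and $\|Q_n({\bf D}^2 r)+\partial_t U_{nt}({\bf D}^2 r)\|_{L_2(\partial D)}$ by $\|r\|_{L_2(D)}$, we obtain $|\int_D (\Pi^0_{k-4,D}\Pi_{k,D}^\Delta\xi)\,p|\apprle (\|\Pi^0_{k-4,D}\xi\|_{L_2(D)} + \|\xi\|_{L_2(\partial D)} + \sum_i\|\partial\xi/\partial x_i\|_{L_2(\partial D)} + \text{lower-order terms in }\Pi_{k,D}^\Delta\xi)\|p\|_{L_2(D)}$. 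Dividing by $\|p\|_{L_2(D)}$ and taking the sup over $p$ gives $\|\Pi^0_{k-4,D}\Pi_{k,D}^\Delta\xi\|_{L_2(D)}$ in terms of $|||\xi|||_{k,D}$ plus a small multiple of $\|\Pi_{k,D}^\Delta\xi\|_{L_2(D)}$.

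Finally I combine the pieces. Since $\Pi_{k,D}^\Delta\xi\in\mathbb{P}_k(D)$ and $D$ is fixed with $h_D=1$, equivalence of norms on the finite-dimensional space $\mathbb{P}_k(D)$ gives $\|\Pi_{k,D}^\Delta\xi\|_{L_2(D)}\approx\|\Pi^0_{k-4,D}\Pi_{k,D}^\Delta\xi\|_{L_2(D)} + \|\Pi_{k,D}^\Delta\xi\|_{L_2(\partial D)} + \sum_i\|\partial\Pi_{k,D}^\Delta\xi/\partial x_i\|_{L_2(\partial D)}$ — here I am using that the trace and gradient-trace functionals, plus the $L_2$-projection onto $\mathbb{P}_{k-4}(D)$, together form a norm on $\mathbb{P}_k(D)$ (injectivity follows as in the dimension-counting lemma above: a polynomial in $\mathbb{P}_k$ with zero trace, zero normal and tangential derivative on $\partial D$, and zero $\Pi^0_{k-4,D}$-projection must vanish). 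The boundary traces of $\Pi_{k,D}^\Delta\xi$ are handled by \eqref{pikd2}--\eqref{pikd3} as above, and $\|\Pi^0_{k-4,D}\Pi_{k,D}^\Delta\xi\|_{L_2(D)}$ was just bounded. Absorbing the small $\|\Pi_{k,D}^\Delta\xi\|_{L_2(D)}$ term from the previous paragraph into the left-hand side closes the estimate. Rescaling restores the powers of $h_D$, yielding $\|\Pi_{k,D}^\Delta\xi\|_{L_2(D)} + h_D|\Pi_{k,D}^\Delta\xi|_{H^1(D)} + h_D^2|\Pi_{k,D}^\Delta\xi|_{H^2(D)}\apprle|||\xi|||_{k,D}$.

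\textbf{Main obstacle.} The delicate point is the trace step: controlling $\|\Pi_{k,D}^\Delta\xi\|_{L_2(\partial D)}$ and $\|\nabla\Pi_{k,D}^\Delta\xi\|_{L_2(\partial D)}$, since \eqref{pikd2}--\eqref{pikd3} only pin down the \emph{integrals} of the trace and gradient-trace, not their full $L_2(\partial D)$ norms. Unlike the second-order case in \cite{Brenner17}, where the projection essentially matches the full boundary trace, here one must genuinely exploit that $\Pi_{k,D}^\Delta\xi$ is a polynomial of bounded degree: its restriction to $\partial D$ lies in a fixed finite-dimensional space, so by norm equivalence the full $L_2(\partial D)$ norm is comparable to the norm built from the finitely many matched functionals together with the $\Pi^0_{k-4,D}$-data — but making this precise requires verifying the injectivity claim carefully, and tracking that the constants depend only on $k$ and $\rho$. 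This is where the argument is least ``routine'' and where I would spend the most care.
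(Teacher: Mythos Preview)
Your approach has a genuine circularity that you cannot close, and you have essentially already identified it in your ``Main obstacle'' paragraph. After your step~5 you arrive at
\[
\|\Pi^0_{k-4,D}\Pi_{k,D}^\Delta\xi\|_{L_2(D)}\ \apprle\ |||\xi|||_{k,D} + C\,\|\Pi_{k,D}^\Delta\xi\|_{L_2(D)},
\]
because the integration-by-parts identity you use (putting derivatives onto the test polynomial $r$) leaves boundary terms in \emph{both} $\xi$ and $\Pi_{k,D}^\Delta\xi$, and the latter you can only control by discrete trace as $\apprle\|\Pi_{k,D}^\Delta\xi\|_{L_2(D)}$. In step~6 you then need $\|\Pi_{k,D}^\Delta\xi\|_{L_2(\partial D)}$ and $\|\nabla\Pi_{k,D}^\Delta\xi\|_{L_2(\partial D)}$, and again the only route is discrete trace back to $\|\Pi_{k,D}^\Delta\xi\|_{L_2(D)}$. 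There is no smallness parameter here (the constants depend only on $k$ and $\rho$), so ``absorbing'' is impossible. Your proposed fix---a norm equivalence on $\mathbb{P}_k(D)$ built from the three scalars $\int_{\partial D}\Pi_{k,D}^\Delta\xi$, $\int_{\partial D}\nabla\Pi_{k,D}^\Delta\xi$ together with $\Pi^0_{k-4,D}$---fails for dimensional reasons: for $k=2$ that is $3$ functionals against $\dim\mathbb{P}_2=6$, so injectivity cannot hold.

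The idea you are missing is to bound $|\Pi_{k,D}^\Delta\xi|_{H^2(D)}$ \emph{first}, using the orthogonality of the projection in the opposite direction of integration by parts. From \eqref{pikd1} with $q=\Pi_{k,D}^\Delta\xi$ one has $|\Pi_{k,D}^\Delta\xi|_{H^2(D)}^2 = ((\Pi_{k,D}^\Delta\xi,\xi))_D$; now apply \eqref{eq_bp1} with $u=\Pi_{k,D}^\Delta\xi$ (the polynomial) and $v=\xi$. All differential operators land on the polynomial, so by Lemma~\ref{discrete_estimates} every coefficient is $\apprle|\Pi_{k,D}^\Delta\xi|_{H^2(D)}$, while $\xi$ appears only through $\Pi^0_{k-4,D}\xi$ (since $\Delta^2\Pi_{k,D}^\Delta\xi\in\mathbb{P}_{k-4}$) and through its edge projections---exactly the ingredients of $|||\xi|||_{k,D}$. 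Dividing gives $|\Pi_{k,D}^\Delta\xi|_{H^2(D)}\apprle|||\xi|||_{k,D}$. The $L_2$ and $H^1$ pieces then follow not from inverse estimates (which would go the wrong way) but from the generalized Poincar\'e inequality (Lemma~\ref{general-poincare}) together with \eqref{pikd2}--\eqref{pikd3}, which bound the missing low-order functionals of $\Pi_{k,D}^\Delta\xi$ by the corresponding ones of $\xi$, hence by $|||\xi|||_{k,D}$.
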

\begin{proof}
	Suppose $u=\Pi_{k,D}^\Delta\xi$ and $h_D = 1$, by \eqref{pikd1}, \eqref{eq_bp1}, we have 
	\begin{eqnarray}\label{eq_in}
		|\Pi_{k,D}^\Delta\xi|_{H^2(D)}^2 
		&=& ((\Pi_{k,D}^\Delta\xi,\xi))_D\nonumber \\
		&=&
		\int_D U_{\Delta}({\bf D}^2 (\Pi_{k,D}^\Delta\xi)) \xi \ {\rm d}x
		+\int_{\partial D} U_{nn}({\bf D}^2 (\Pi_{k,D}^\Delta\xi))\frac{\partial \xi}{\partial n}\ {\rm d}s\nonumber \\
		&&-\int_{\partial D} \left(Q_{n}({\bf D}^2 u(\Pi_{k,D}^\Delta\xi))+\frac{\partial U_{nt}({\bf D}^2 (\Pi_{k,D}^\Delta\xi))}{\partial t}\right)\xi\ {\rm d}s, 
	\end{eqnarray}
	then, by Cauchy-Schwarz inequality, Lemma \ref{discrete_estimates} and \eqref{eq_v-norm}, we have
	\begin{eqnarray}\label{eq_in2}
		|\Pi_{k,D}^\Delta\xi|_{H^2(D)}^2 
		\apprle
		|\Pi_{k,D}^\Delta\xi|_{H^2(D)} |||\xi|||_{k,D}
		\apprle
		|||\xi|||_{k,D}^2.
	\end{eqnarray}
	By \eqref{pikd3}-\eqref{pikd2}, and Lemma \ref{general-poincare}, we have  
	\begin{equation}\label{norm_P}
		\|\Pi_{k,D}^\Delta\xi\|^2_{H^2(D)} \apprle |\Pi_{k,D}^\Delta\xi|_{H^2(D)}^2+\sum_{i=1}^{2}\left(\int_{\partial D}\frac{\partial \Pi_{k,D}^\Delta\xi }{\partial x_i}\ {\rm d}s\right)^2+\left(\int_{\partial D} \Pi_{k,D}^\Delta \xi\ {\rm d}s\right)^2
	\end{equation}
	also we have 
	$$
	\int_{\partial D}\frac{\partial \Pi_{k,D}^\Delta\xi }{\partial x_i}\ {\rm d}s
	=\int_{\partial D}\frac{\partial \xi }{\partial x_i}\ {\rm d}s
	$$
	\begin{equation}\label{bound_xi}
		\left|\int_{\partial D}\frac{\partial \xi }{\partial x_i}\ {\rm d}s\right|
		\leq
		\sum_{e\in\mathcal{E}_D}\left|\int_{e}{\Pi_{0,e}^0\frac{\partial \xi }{\partial x_i} }\ {\rm d}s\right|
		\apprle
		\left(\sum_{e\in\mathcal{E}_D}
		\left\|\Pi^0_{r-1,e}\frac{\partial \xi }{\partial x_i}
		\right\|_{L^2(e)}^2\right)^{1/2}
	\end{equation}
	Similarly, 
	\begin{equation}\label{bound_xi1}
		\left(\int_{\partial D} \Pi_{k,D}^\Delta \xi\ {\rm d}s\right)^2
		\apprle
		\sum_{e\in\mathcal{E}_D}\|\Pi^0_{r,e}\xi\|_{L^2(e)}^2.
	\end{equation}
	From \eqref{eq_in}-\eqref{bound_xi1}, the following inequality is valid
	\begin{equation}
		\|\Pi_{k,D}^\Delta\xi\|_{H^2(D)} \apprle|||\xi|||.
	\end{equation}
The proof is completed.
\end{proof}
\begin{lemma}\label{friedrichs}
	For any $\xi\in H^2(D)$, we have
	\begin{equation}\label{upper_bound1}
		|||\xi |||_{k,D}
		\apprle 
		\|\xi\|_{L^2(D)}
		+
		h_D|\xi|_{H^1(D)}
		+
		h_D^2|\xi|_{H^2(D)},
	\end{equation}
	and there exists $\bar{\xi}\in \mathbb{P}_1$, such that
	\begin{equation}\label{upper_bound2}
		|||\xi - \bar{\xi} |||_{k,D}
		\apprle 
		h_D^2|\xi|_{H^2(D)},
	\end{equation}	
	where 
	$$
	\int_{\partial D}\nabla\bar{\xi} \ {\rm d}x= \int_{\partial D}\nabla\xi \ {\rm d}x,
	$$
	$$
	\int_{\partial D}\bar{\xi} \ {\rm d}x= \int_{\partial D}\xi \ {\rm d}x.
	$$
\end{lemma}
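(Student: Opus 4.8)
The plan is to prove the two estimates separately, in both cases working on the reference scaling $h_D = 1$ and reinstating powers of $h_D$ at the end by homogeneity. The key tools are the definition \eqref{eq_v-norm} of $|||\cdot|||_{k,D}$, the fact that each $\Pi^0$ is an $L_2$-orthogonal projection (hence $L_2$-stable), and trace inequalities on $D$ and its edges.

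For \eqref{upper_bound1}, I would bound the three terms of \eqref{eq_v-norm} one at a time. The volume term is immediate: $\|\Pi^0_{k-4,D}\xi\|_{L_2(D)} \le \|\xi\|_{L_2(D)}$ since $\Pi^0_{k-4,D}$ is an orthogonal projection in $L_2(D)$. For the two boundary terms, I would again drop the edge projections $\Pi^0_{r,e}$ and $\Pi^0_{r-1,e}$ by $L_2(e)$-stability, reducing to $\|\xi\|_{L_2(\partial D)}$ and $\|\partial\xi/\partial x_i\|_{L_2(\partial D)}$. Then a standard trace theorem on the shape-regular polygon $D$ — using \eqref{assume1}--\eqref{assume2}, e.g. via the triangulation $\mathcal{T}_D$ and the usual trace inequality $\|\zeta\|_{L_2(\partial D)} \apprle \|\zeta\|_{L_2(D)} + |\zeta|_{H^1(D)}$ applied with $\zeta = \xi$ and then $\zeta = \partial\xi/\partial x_i \in H^1(D)$ — gives $\|\xi\|_{L_2(\partial D)} \apprle \|\xi\|_{L_2(D)} + |\xi|_{H^1(D)}$ and $\|\nabla\xi\|_{L_2(\partial D)} \apprle |\xi|_{H^1(D)} + |\xi|_{H^2(D)}$. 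Combining these three bounds (at $h_D = 1$) and rescaling recovers \eqref{upper_bound1}; the weights $h_D$, $h_D^3$ in \eqref{eq_v-norm} are exactly what makes the rescaled inequality homogeneous with right-hand side $\|\xi\|_{L_2(D)} + h_D|\xi|_{H^1(D)} + h_D^2|\xi|_{H^2(D)}$.

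For \eqref{upper_bound2}, the idea is to subtract a linear polynomial $\bar\xi\in\mathbb{P}_1$ so that the lower-order terms on the right of \eqref{upper_bound1} are killed. Since $|||\cdot|||_{k,D}$ is a semi-norm that annihilates... almost nothing, I cannot simply say $|||\xi-\bar\xi||| = |||\xi|||$; instead I apply \eqref{upper_bound1} to $\xi - \bar\xi$:
\[
|||\xi-\bar\xi|||_{k,D} \apprle \|\xi-\bar\xi\|_{L_2(D)} + h_D|\xi-\bar\xi|_{H^1(D)} + h_D^2|\xi-\bar\xi|_{H^2(D)}.
\]
Because $\bar\xi$ is affine, $|\xi-\bar\xi|_{H^2(D)} = |\xi|_{H^2(D)}$, so the last term is already of the desired form. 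It remains to choose $\bar\xi$ so that $\|\xi-\bar\xi\|_{L_2(D)} + h_D|\xi-\bar\xi|_{H^1(D)} \apprle h_D^2|\xi|_{H^2(D)}$. This is precisely a Bramble--Hilbert / Poincaré-type estimate: taking $\bar\xi$ to be (a suitable realization of) the $\mathbb{P}_1$-interpolant determined by matching $\int_{\partial D}\nabla\xi$ and $\int_{\partial D}\xi$ (or $\int_D\xi$), the generalized Poincaré inequality of Lemma \ref{general-poincare}, applied to $\xi - \bar\xi$ whose relevant boundary/volume averages vanish by construction, yields $\|\xi-\bar\xi\|_{H^2(D)} \apprle |\xi-\bar\xi|_{H^2(D)} = |\xi|_{H^2(D)}$ at $h_D = 1$, which is more than enough. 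I would note that the gradient of an affine function is constant, so $\int_{\partial D}\nabla\bar\xi\,dx = |\partial D|\,\nabla\bar\xi$ determines that constant, and then one free additive constant in $\bar\xi$ is fixed by the second condition; this shows such a $\bar\xi$ exists and is unique.

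The main obstacle is the bookkeeping in \eqref{upper_bound2}: one must verify that the $\mathbb{P}_1$ function picked out by the stated average conditions really is the one for which Lemma \ref{general-poincare} (or an equivalent Bramble--Hilbert argument) applies to $\xi-\bar\xi$ with the $|\cdot|_{H^2(D)}$ seminorm alone on the right-hand side, and to track the scaling carefully so that exactly $h_D^2|\xi|_{H^2(D)}$ — and not a larger power — appears. The alternative normalization $\int_{\partial D}\bar\xi\,dx = \int_D\xi\,dx$ needs the same argument with the corresponding variant of the Poincaré inequality; both are handled identically. Everything else is a routine application of $L_2$-stability of orthogonal projections and trace inequalities on the shape-regular polygon.
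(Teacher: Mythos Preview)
Your proposal is correct and follows essentially the same approach as the paper: for \eqref{upper_bound1} you both drop the projections by $L_2$-stability and invoke the trace theorem on the shape-regular polygon, and for \eqref{upper_bound2} you both apply the first bound to $\xi-\bar\xi$ and then use the generalized Poincar\'e inequality (Lemma~\ref{general-poincare}) together with the averaging conditions on $\bar\xi$ to collapse the right-hand side to $|\xi|_{H^2(D)}$. The paper's write-up is slightly terser (it passes directly through $\|\xi-\bar\xi\|_{H^2(D)}$ rather than separating the three scaled terms), but the logic is identical.
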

\begin{proof}
	Assume $h_D$ = 1, then by trace theorem
	$$|||\xi |||_{k,D}
	\apprle 
	\|\xi\|_{L^2(D)}
	+\|\xi\|_{L^2(\partial D)}
	+\sum_{i=1,2}\left\|\frac{\partial \xi}{\partial x_i}\right\|_{L^2(\partial D)}
	\apprle 
	\|\xi\|_{L^2(D)}
	+
	|\xi|_{H^1(D)}
	+
	|\xi|_{H^2(D)}.
	$$
	So that we have \eqref{upper_bound1}, and by Lemma \ref{general-poincare}
	$$|||\xi -\bar{\xi} |||_{k,D}^2
	\apprle 
	\|\xi-\bar{\xi}\|_{H^2(D)}^2
	\apprle
	|\xi|_{H^2(D)}^2+\sum\limits_{i=1}^{2}
	\left( \int_{\partial D}\frac{\partial (\xi -\bar{\xi})}{\partial x_i}\ {\rm d}x
	\right)^2 +
	\left( \int_{\partial D}{\xi -\bar{\xi}}\ {\rm d}x
	\right)^2 		
	$$
	with the definition of $\bar{\xi}$, we arrived at
	$$|||\xi -\bar\xi |||_{k,D}
	\apprle 
	|\xi|_{H^2(D)}.
	$$
The proof is completed.
\end{proof}

\begin{corollary}\label{corollary_1}
	We have
	\begin{eqnarray}
		\|\xi - \Pi_{k,D}^\Delta\xi\|_{L^2(D)}
		&\apprle&
		h_D^{l+1}|\xi|_{H^{l+1}(D)},\quad \forall \xi\in H^{l+1}(D),\ 1\leq l\leq k,\label{l2_error}
		\\
		|\xi - \Pi_{k,D}^\Delta\xi|_{H^{1}(D)}
		&\apprle&
		h_D^{l}|\xi|_{H^{l+1}(D)},\quad \forall \xi\in H^{l+1}(D),\ 1\leq l\leq k,\label{h1_error}
		\\
		|\xi - \Pi_{k,D}^\Delta\xi|_{H^{2}(D)}
		&\apprle&
		h_D^{l-1}|\xi|_{H^{l+1}(D)},\quad \forall \xi\in H^{l+1}(D),\ 1\leq l\leq k.\label{h2_error}
	\end{eqnarray}
\end{corollary}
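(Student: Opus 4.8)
The plan is to combine the polynomial reproduction of $\Pi_{k,D}^\Delta$ with the stability bound of Lemma \ref{leq-semi-norm}, the upper bound \eqref{upper_bound1} of Lemma \ref{friedrichs}, and the Bramble--Hilbert estimate of Lemma \ref{bramble}. First I would record that $\Pi_{k,D}^\Delta$ acts as the identity on $\mathbb{P}_k(D)$: since $(((\cdot,\cdot)))$ is an inner product on $H^2(D)$ (Lemma \ref{general-poincare}) and hence on the subspace $\mathbb{P}_k(D)$, the defining relation $(((\Pi_{k,D}^\Delta p, q)))_D=(((p,q)))_D$ for all $q\in\mathbb{P}_k(D)$ forces $\Pi_{k,D}^\Delta p=p$ whenever $p\in\mathbb{P}_k(D)$. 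Therefore, for any $q\in\mathbb{P}_l$ with $l\le k$ we may write $\xi-\Pi_{k,D}^\Delta\xi=(\xi-q)-\Pi_{k,D}^\Delta(\xi-q)$, which reduces all three estimates to controlling $\xi-q$ and its image under $\Pi_{k,D}^\Delta$.

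Next I would fix $q\in\mathbb{P}_l$, $1\le l\le k$, to be the averaged Taylor (Bramble--Hilbert) approximant from Lemma \ref{bramble}, so that $|\xi-q|_{H^m(D)}\apprle h_D^{l+1-m}|\xi|_{H^{l+1}(D)}$ for $0\le m\le\min\{l,2\}$; note that when $l=1$ one has $|\xi-q|_{H^2(D)}=|\xi|_{H^2(D)}$ trivially, and $h_D^2|\xi|_{H^2(D)}=h_D^{l+1}|\xi|_{H^{l+1}(D)}$ still carries the correct power of $h_D$. Inserting these bounds into \eqref{upper_bound1} applied to $\xi-q$ (valid for arbitrary $H^2(D)$ functions) gives
\[
|||\xi-q|||_{k,D}\apprle\|\xi-q\|_{L_2(D)}+h_D|\xi-q|_{H^1(D)}+h_D^2|\xi-q|_{H^2(D)}\apprle h_D^{l+1}|\xi|_{H^{l+1}(D)},\qquad 1\le l\le k.
\]
Then I would apply Lemma \ref{leq-semi-norm} to $\xi-q$, obtaining
\[
\|\Pi_{k,D}^\Delta(\xi-q)\|_{L_2(D)}+h_D|\Pi_{k,D}^\Delta(\xi-q)|_{H^1(D)}+h_D^2|\Pi_{k,D}^\Delta(\xi-q)|_{H^2(D)}\apprle|||\xi-q|||_{k,D}\apprle h_D^{l+1}|\xi|_{H^{l+1}(D)}.
\]

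Finally I would close each estimate with the triangle inequality applied to $\xi-\Pi_{k,D}^\Delta\xi=(\xi-q)-\Pi_{k,D}^\Delta(\xi-q)$. For \eqref{l2_error}, $\|\xi-\Pi_{k,D}^\Delta\xi\|_{L_2(D)}\le\|\xi-q\|_{L_2(D)}+\|\Pi_{k,D}^\Delta(\xi-q)\|_{L_2(D)}\apprle h_D^{l+1}|\xi|_{H^{l+1}(D)}$ for $1\le l\le k$. For \eqref{h1_error}, dividing the stability bound by $h_D$ gives $|\Pi_{k,D}^\Delta(\xi-q)|_{H^1(D)}\apprle h_D^{l}|\xi|_{H^{l+1}(D)}$, which I add to $|\xi-q|_{H^1(D)}\apprle h_D^{l}|\xi|_{H^{l+1}(D)}$. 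For \eqref{h2_error}, dividing by $h_D^2$ gives $|\Pi_{k,D}^\Delta(\xi-q)|_{H^2(D)}\apprle h_D^{l-1}|\xi|_{H^{l+1}(D)}$, and I add $|\xi-q|_{H^2(D)}\apprle h_D^{l-1}|\xi|_{H^{l+1}(D)}$ — this last Bramble--Hilbert estimate needs $m=2\le l$, which is exactly why the range is $2\le l\le k$ (alternatively, the $H^2$-seminorm term of $\Pi_{k,D}^\Delta(\xi-q)$ can instead be bounded directly by \eqref{obvious_ineq}). There is no deep obstacle here; the one point requiring care is the homogeneity bookkeeping across the three norms and at the endpoint $l=1$, where the Bramble--Hilbert bound for the $H^2$-seminorm degenerates to an identity but still carries the right power of $h_D$, while for \eqref{h2_error} the value $l=1$ is genuinely excluded since then $h_D^{l-1}=1$.
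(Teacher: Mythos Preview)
Your proposal is correct and follows essentially the same approach as the paper: the paper's proof also uses the decomposition $\xi-\Pi_{k,D}^\Delta\xi=(\xi-q)+\Pi_{k,D}^\Delta(q-\xi)$ together with Lemma~\ref{bramble}, Lemma~\ref{leq-semi-norm}, and Lemma~\ref{friedrichs}, and then invokes the triangle inequality. Your write-up is simply more explicit about the polynomial reproduction of $\Pi_{k,D}^\Delta$ and the homogeneity bookkeeping at the endpoint $l=1$, but the underlying argument is identical.
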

\begin{proof}
	From Lemma \ref{bramble}, Lemma \ref{leq-semi-norm}, and Lemma \ref{friedrichs}, for any $q\in \mathbb{P}_l, \ \xi\in H^{l+1}(D)$ we have
	$$
	\|\xi - \Pi_{k,D}^\Delta\xi\|_{L^2(D)}\apprle\|\xi - q\|_{L^2(D)}+\|\Pi_{k,D}^\Delta (q - \xi)\|_{L^2(D)}\apprle h_D^{l+1}|\xi|_{H^{l+1}(D)},
	$$
	so as the $H^1$ and $H^2$ error estimates.
\end{proof}
For the $L^2$ operator $\Pi_{k,D}^0$,  from \cite{Brenner17}, we have
\begin{equation}\label{pikd0l2}
	\|\Pi_{k,D}^0\xi\|_{L^2(D)}\leq \|\xi\|_{L^2(D)},
	\quad 
	|\Pi_{k,D}^0\xi|_{H^1(D)}\apprle |\xi|_{H^1(D)},\quad 
	\forall \xi\in H^1(D),
\end{equation}
and
\begin{equation}\label{pikd0h1}
	|\xi - \Pi_{k,D}^0\xi|_{H^1(D)}\apprle h_D^{l}|\xi|_{H^{l+1}(D)},\quad \forall \xi\in H^{l+1}(D),\ 1\leq l\leq k.
\end{equation}
\begin{lemma}\label{pikd0-bound}
	The estimates of $\Pi_{k,D}^0$ satisfy
	\begin{eqnarray}
		|\Pi_{k,D}^0\xi|_{H^{2}(D)}
		&\apprle&
		|\xi|_{H^{2}(D)},\quad \forall \xi\in H^{2}(D),\label{pikd0h2}
		\\
		|\xi - \Pi_{k,D}^0\xi|_{H^{2}(D)}
		&\apprle&
		h_D^{l-1}|\xi|_{H^{l+1}(D)},\quad \forall \xi\in H^{l+1}(D),\ 1\leq l\leq k.\label{pikd0h2_error}
	\end{eqnarray}
\end{lemma}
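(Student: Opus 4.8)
The plan is to combine the polynomial inverse estimate from Lemma~\ref{discrete_estimates} with the Bramble--Hilbert estimate of Lemma~\ref{bramble}, exactly in the spirit of the argument used for Corollary~\ref{corollary_1}. First I would prove the stability bound \eqref{pikd0h2}. Fix $\xi\in H^2(D)$ and let $q\in\mathbb{P}_1$ be arbitrary; since $\mathbb{P}_1\subset\mathbb{P}_k$ and $\Pi_{k,D}^0$ reproduces polynomials of degree $\le k$, we have $\Pi_{k,D}^0\xi=\Pi_{k,D}^0(\xi-q)+q$, and because ${\bf D}^2 q=0$ this gives $|\Pi_{k,D}^0\xi|_{H^2(D)}=|\Pi_{k,D}^0(\xi-q)|_{H^2(D)}$. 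Now $\Pi_{k,D}^0(\xi-q)\in\mathbb{P}_k(D)$, so the inverse estimate $\|{\bf D}^2 u\|_{L_2(D)}\apprle h_D^{-2}\|u\|_{L_2(D)}$ — the scaled version of the first bound in Lemma~\ref{discrete_estimates} — together with the $L_2$-stability $\|\Pi_{k,D}^0(\xi-q)\|_{L_2(D)}\le\|\xi-q\|_{L_2(D)}$ from \eqref{pikd0l2} yields $|\Pi_{k,D}^0\xi|_{H^2(D)}\apprle h_D^{-2}\|\xi-q\|_{L_2(D)}$. Taking the infimum over $q\in\mathbb{P}_1$ and applying Lemma~\ref{bramble} with $l=1$, $m=0$, we get $|\Pi_{k,D}^0\xi|_{H^2(D)}\apprle h_D^{-2}\cdot h_D^{2}|\xi|_{H^2(D)}=|\xi|_{H^2(D)}$, which is \eqref{pikd0h2}.

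For the approximation bound \eqref{pikd0h2_error} I would run the standard projection-error argument. Fix $\xi\in H^{l+1}(D)$ with $2\le l\le k$ and let $q\in\mathbb{P}_l$ be arbitrary. Since $\Pi_{k,D}^0 q=q$, we can write $\xi-\Pi_{k,D}^0\xi=(\xi-q)-\Pi_{k,D}^0(\xi-q)$, so that by the triangle inequality and the stability estimate \eqref{pikd0h2} just established,
\[
|\xi-\Pi_{k,D}^0\xi|_{H^2(D)}\le|\xi-q|_{H^2(D)}+|\Pi_{k,D}^0(\xi-q)|_{H^2(D)}\apprle|\xi-q|_{H^2(D)}.
\]
Taking the infimum over $q\in\mathbb{P}_l$ and invoking Lemma~\ref{bramble} with $m=2$ gives $|\xi-\Pi_{k,D}^0\xi|_{H^2(D)}\apprle h_D^{l-1}|\xi|_{H^{l+1}(D)}$, as claimed.

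The only point that needs a little care is the $h_D$-dependence of the inverse estimate: Lemma~\ref{discrete_estimates} is stated for $h_D=1$, and one must run the usual dilation argument on $\widehat{D}=h_D^{-1}D$ to recover the factor $h_D^{-2}$ (the $L_2$-norm scales like $h_D$, each second derivative like $h_D^{-2}$). Everything else is a routine combination of polynomial reproduction, the $L_2$-stability of $\Pi_{k,D}^0$ from \eqref{pikd0l2}, and Bramble--Hilbert, and the two estimates are entirely parallel to \eqref{pikd0h1} and to Corollary~\ref{corollary_1} for $\Pi_{k,D}^\Delta$. I do not anticipate a genuine obstacle: the only subtlety is bookkeeping of the scaling constants, which under the $\apprle$ convention are harmless since they depend only on $k$ and $\rho$.
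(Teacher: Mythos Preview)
Your argument is correct and in fact more self-contained than the paper's. The paper proves \eqref{pikd0h2} by routing through $\Pi_{k,D}^\Delta$: it writes
\[
|\Pi_{k,D}^0\xi|_{H^2(D)}\le |\Pi_{k,D}^0\xi-\Pi_{k,D}^\Delta\xi|_{H^2(D)}+|\Pi_{k,D}^\Delta\xi|_{H^2(D)},
\]
bounds the second term by \eqref{obvious_ineq}, and handles the first (a polynomial) by an inverse inequality from $H^2$ to $H^1$ followed by the already-established $H^1$ error bounds \eqref{h1_error} and \eqref{pikd0h1} for the two projections. Your route---subtract a linear polynomial, apply the $H^2\to L_2$ inverse estimate to the resulting polynomial, use the $L_2$-stability of $\Pi_{k,D}^0$, and close with Bramble--Hilbert at $l=1$---avoids any appeal to $\Pi_{k,D}^\Delta$ or to Corollary~\ref{corollary_1}, so it would work for the $L_2$-projection on any shape-regular domain independently of the virtual-element machinery. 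The paper's approach has the advantage of reusing results already in place in the flow of the section; yours has the advantage of being a direct, stand-alone argument. The paper does not spell out \eqref{pikd0h2_error} at all; your derivation of it from \eqref{pikd0h2} via polynomial reproduction and Lemma~\ref{bramble} is the intended standard step.
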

\begin{proof}
	Here, suppose $\xi\in H^2(D)$, then by \eqref{obvious_ineq}, \eqref{h1_error}, \eqref{pikd0h1} and the inverse inequality, we have
	\begin{eqnarray*}
		|\Pi_{k,D}^0\xi|_{H^2(D)} 
		&\apprle &
		|\Pi_{k,D}^0\xi-\Pi_{k,D}^\Delta\xi|_{H^2(D)}
		+|\Pi_{k,D}^\Delta \xi|_{H^2(D)}
		\\
		&\apprle &
		h_D^{-1}|\Pi_{k,D}^0\xi-\xi+\xi-\Pi_{k,D}^\Delta\xi|_{H^1(D)}
		+|\xi|_{H^2(D)}
		\\
		&\apprle &
		|\xi|_{H^2(D)}
	\end{eqnarray*}
which completes the proof.
\end{proof}
\subsection{An Inverse Inequality} 
\begin{lemma}\label{minimum-energy}
	The following inequality is valid
	\begin{equation}
		|v|_{H^2(D)} \leq |\xi|_{H^2(D)},
	\end{equation}
	for any $v\in \mathcal{Q}^k(D)$ and $\xi\in H^2(D)$ such that $\xi=v$ on $\partial D$, $\frac{\partial \xi}{\partial n}=\frac{\partial v}{\partial n}$ on $\partial D$, and
	$\Pi_{k,D}^0(\xi -v) = 0.$ 
\end{lemma}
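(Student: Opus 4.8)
The plan is to reduce the statement to a Pythagorean identity for the $H^2$-seminorm, using the variational characterization \eqref{var-form} of elements of $\mathcal{Q}^k(D)$ together with the $L_2$-orthogonality built into the hypothesis $\Pi_{k,D}^0(\xi-v)=0$. Recall that $((u,u))_D=|u|_{H^2(D)}^2$, so once I show that the cross term $((v,\xi-v))_D$ vanishes, the inequality is immediate.

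First I would set $\eta:=\xi-v\in H^2(D)$. The boundary data $\xi=v$ and $\partial\xi/\partial n=\partial v/\partial n$ on $\partial D$ give $\eta|_{\partial D}=0$ and $(\partial\eta/\partial n)|_{\partial D}=0$, so $\eta\in H^2_0(D)$. Since $v\in\mathcal{Q}^k(D)$, there is a polynomial $q_v=\Delta^2 v\in\mathbb{P}_k(D)$ with $((v,w))_D=(q_v,w)$ for all $w\in H^2_0(D)$ by \eqref{var-form}; taking $w=\eta$ yields $((v,\eta))_D=\int_D q_v\,\eta\,dx$. Now the remaining hypothesis $\Pi_{k,D}^0\eta=0$ says precisely that $\int_D p\,\eta\,dx=0$ for every $p\in\mathbb{P}_k(D)$, and in particular for $p=q_v$; hence $((v,\eta))_D=0$. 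Expanding
$$
|\xi|_{H^2(D)}^2=((v+\eta,\,v+\eta))_D=((v,v))_D+2((v,\eta))_D+((\eta,\eta))_D=|v|_{H^2(D)}^2+|\eta|_{H^2(D)}^2,
$$
and discarding the nonnegative term $|\eta|_{H^2(D)}^2$ gives $|v|_{H^2(D)}\le|\xi|_{H^2(D)}$.

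There is no analytic obstacle in this argument; the only thing requiring a little care is the bookkeeping that makes both parts of the hypothesis act together — the matching Dirichlet and Neumann boundary data are exactly what places $\eta$ in $H^2_0(D)$ so that \eqref{var-form} can be invoked, while the $L_2$-projection condition is exactly what annihilates the resulting pairing of $\eta$ against $q_v\in\mathbb{P}_k(D)$. This is also where the enlarged choice $q_v\in\mathbb{P}_k(D)$ (rather than $\mathbb{P}_{k-4}(D)$) is convenient: with the smaller space one would only need $\Pi_{k-4,D}^0\eta=0$, but the computation is otherwise verbatim the same.
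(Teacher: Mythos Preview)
Your proof is correct and follows essentially the same route as the paper: show $((v,\xi-v))_D=0$, deduce the Pythagorean identity $|\xi|_{H^2(D)}^2=|v|_{H^2(D)}^2+|\xi-v|_{H^2(D)}^2$, and conclude. The paper's version is terser and cites the integration-by-parts identity \eqref{eq_bp1} rather than invoking \eqref{var-form} directly, but since \eqref{var-form} is exactly \eqref{eq_bp1} specialized to $w\in H^2_0(D)$, the two arguments are the same in substance; your write-up is in fact more explicit about why each hypothesis is used.
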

\begin{proof}
	In (11), let $u$ be $v$, $v$ be $\xi-v$, the second and third terms at right hand side can be eliminated.  Also $\xi$ satisfies $\Pi_{k,D}^0(\xi-v)=0$,  and
	$U_{\Delta}({\bf D}^2v) \in \mathbb{P}_k(D)$ so that $((v,\xi-v))_D = 0$.
	Then by \eqref{pikd1} and \eqref{eq_bp1}, we have
	$$
	((v,\xi-v))_D = ((v,\xi))_D-|v|^2_{H^2(D)}=0
	$$
	And hence,
	$$
	|\xi|^2_{H^2(D)} = |\xi-v|^2_{H^2(D)} +|v|^2_{H^2(D)},
	$$	
	which means  
	\begin{equation*}
		|v|_{H^2(D)} \leq |\xi|_{H^2(D)}. 
	\end{equation*}
So the proof is completed.
\end{proof}
Next, we will consider the relation between
$|v|_{H^2(D)}$ and $|||v|||_{k,D}$, $\forall v \in \mathcal{Q}^k(D)$.
\subsubsection{Construction of $w$}\label{case1}
The degree of freedom of $v\in \mathcal{Q}^k(D)$ is defined in \cite{Brezzi13}, from (4.7)--(4.11).
For $k\geq 2,$ we will employ the triangulation $\mathcal{T}_D$ to define a piecewise polynomial $w$ which has the same boundary conditions as $v$. On each internal triangle, we employ a $\mathbb{P}_{r}$ macroelement, which is defined in \cite{Douglas79}, Section 1. Suppose $k=2,3$, in Figure \ref{fg-k2k3},  on each internal triangle, the function is defined by $\mathbb{P}_3$ macroelement as in Figure \ref{fg-p3}. All degrees of freedom within $D$ are 0.
\begin{figure}[H]
	\begin{center}
		\begin{tikzpicture}[scale = 1.2]
			\coordinate (A) at (0,0);
			\coordinate (B) at (3,0);
			\coordinate (C) at (3.5,1.5);
			\coordinate (D) at (2,3);
			\coordinate (E) at (0,1);
			\coordinate (O) at ($1/5*(A)+1/5*(B)+1/5*(C)+1/5*(D)+1/5*(E)$);
			\draw (A)
			--(B)
			--(C)
			--(D)
			--(E)
			--(A);
			\draw[style=dashed](O)--(A);
			\draw[style=dashed](O)--(B);
			\draw[style=dashed](O)--(C);
			\draw[style=dashed](O)--(D);
			\draw[style=dashed](O)--(E);
			
			\fill [black] (A) circle (1pt);
			\draw [black] (A) circle (2.5pt);
			\fill [black] (B) circle (1pt);
			\draw [black] (B) circle (2.5pt);
			\fill [black] (C) circle (1pt);
			\draw [black] (C) circle (2.5pt);
			\fill [black] (D) circle (1pt);
			\draw [black] (D) circle (2.5pt);
			\fill [black] (E) circle (1pt);
			\draw [black] (E) circle (2.5pt);			
			\coordinate (A) at (0+5,0);
			\coordinate (B) at (3+5,0);
			\coordinate (C) at (3.5+5,1.5);
			\coordinate (D) at (2+5,3);
			\coordinate (E) at (0+5,1);
			\coordinate (O) at ($1/5*(A)+1/5*(B)+1/5*(C)+1/5*(D)+1/5*(E)$);
			\coordinate (AB) at ($(A)!1/2!(B)$);
			\coordinate (BC) at ($(B)!1/2!(C)$);
			\coordinate (CD) at ($(C)!1/2!(D)$);
			\coordinate (DE) at ($(D)!1/2!(E)$);
			\coordinate (EA) at ($(E)!1/2!(A)$);
			\draw (A)
			--(B)
			--(C)
			--(D)
			--(E)
			--(A);
			\draw[style=dashed](O)--(A);
			\draw[style=dashed](O)--(B);
			\draw[style=dashed](O)--(C);
			\draw[style=dashed](O)--(D);
			\draw[style=dashed](O)--(E);
			\fill [black] (A) circle (1pt);
			\draw [black] (A) circle (2.5pt);
			
			\fill [black] (B) circle (1pt);
			\draw [black] (B) circle (2.5pt);
			
			\fill [black] (C) circle (1pt);
			\draw [black] (C) circle (2.5pt);
			
			\fill [black] (D) circle (1pt);
			\draw [black] (D) circle (2.5pt);
			
			\fill [black] (E) circle (1pt);
			\draw [black] (E) circle (2.5pt);
			\draw  ($(AB)$) -- ($(AB)!0.2cm!90:(A)$);
			\draw  ($(AB)$) -- ($(AB)!0.2cm!-90:(A)$);	
			
			\draw  ($(BC)$) -- ($(BC)!0.2cm!90:(B)$);
			\draw  ($(BC)$) -- ($(BC)!0.2cm!-90:(B)$);	
			
			\draw  ($(CD)$) -- ($(CD)!0.2cm!90:(C)$);
			\draw  ($(CD)$) -- ($(CD)!0.2cm!-90:(C)$);	
			
			\draw  ($(DE)$) -- ($(DE)!0.2cm!90:(D)$);
			\draw  ($(DE)$) -- ($(DE)!0.2cm!-90:(D)$);	
			
			\draw  ($(EA)$) -- ($(EA)!0.2cm!90:(E)$);
			\draw ($(EA)$)--($(EA)!0.2cm!-90:(E)$);					
		\end{tikzpicture}		
	\end{center}\caption{Local d.o.f. for the lowest-order element: $k = 2,\ (r,s,m)=(3,1,-2)$ (left), and next to the lowest element:
		$k = 3,\ (r,s,m)=(3,2,-1)$ (right).}\label{fg-k2k3}
\end{figure}
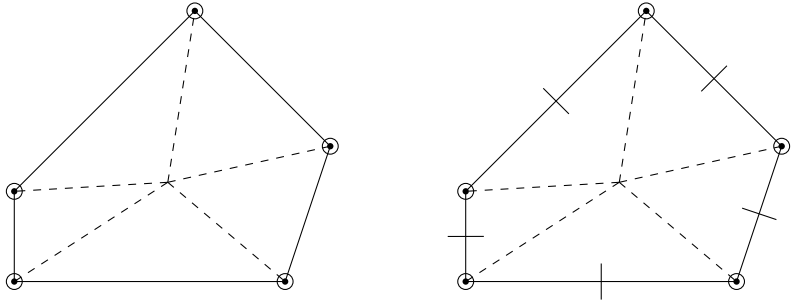
\begin{figure}[H]
	\begin{center}
		\begin{tikzpicture}[scale = 1.3]
			\coordinate (D) at (0,0);
			\coordinate (E) at (3,0);
			\coordinate (O) at (1.5,2.5);
			\coordinate (DE) at ($(D)!1/2!(E)$);
			\coordinate (OD) at ($1/2*(D)+1/2*(O)$);
			\coordinate (OE) at ($(O)!1/2!(E)$);
			\coordinate (center) at ($1/3*(D)+1/3*(O)+1/3*(E)$);
			\draw (D)--(E);
			\draw (E)--(O);
			\draw (O)--(D);
			\draw(center)--(D);
			\draw(center)--(E);
			\draw(center)--(O);
			\fill [black] (D) circle (1pt);
			\draw [black] (D) circle (2.5pt);
			
			\fill [black] (E) circle (1pt);
			\draw [black] (E) circle (2.5pt);
			
			\fill [black] (O) circle (1pt);
			\draw [black] (O) circle (2.5pt);
			
			\draw  ($(OD)$) -- ($(OD)!0.2cm!90:(O)$);
			\draw  ($(OD)$) -- ($(OD)!0.2cm!-90:(O)$);
			
			\draw  ($(DE)$) -- ($(DE)!0.2cm!90:(D)$);
			\draw  ($(DE)$) -- ($(DE)!0.2cm!-90:(D)$);
			
			\draw  ($(OE)$) -- ($(OE)!0.2cm!90:(O)$);
			\draw  ($(OE)$) -- ($(OE)!0.2cm!-90:(O)$);
		\end{tikzpicture}
	\end{center}\caption{$\mathbb{P}_3$ macroelement}\label{fg-p3}
\end{figure}
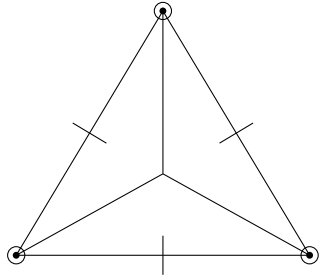
From the definition of $w$, we have
\begin{eqnarray}\label{w-norm}
	\|w\|_{L^2(D)}^2
	\approx
	h_D^2|w|_{H^2(D)}^2
	&\approx& 
	h_D\|w\|_{L^2(\partial D)}^2
	+h_D^3\left\|\frac{\partial w}{\partial n}\right\|_{L^2(\partial D)}^2 \nonumber \\
	&&= 
	h_D\|v\|_{L^2(\partial D)}^2
	+h_D^3\left\|\frac{\partial v}{\partial n}\right\|_{L^2(\partial D)}^2.
\end{eqnarray}
\begin{lemma}\label{inverse-inequality}
	$$
	|v|_{H^2(D)} \apprle h_D^{-2}|||v|||_{k,D},\ \forall v \in \mathcal{Q}^k(D).
	$$
\end{lemma}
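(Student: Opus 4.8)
The plan is to use the splitting $v = \Pi_{k,D}^\Delta v + (v - \Pi_{k,D}^\Delta v)$ and treat the two pieces separately. The projection part is already handled by Lemma \ref{leq-semi-norm}, which gives $h_D^2|\Pi_{k,D}^\Delta v|_{H^2(D)} \apprle |||v|||_{k,D}$, hence $|\Pi_{k,D}^\Delta v|_{H^2(D)} \apprle h_D^{-2}|||v|||_{k,D}$. So the whole problem reduces to estimating $|v - \Pi_{k,D}^\Delta v|_{H^2(D)}$, and since $w := v - \Pi_{k,D}^\Delta v$ lies in $\mathrm{Ker}\,\Pi_{k,D}^\Delta \subset \mathcal{Q}^k(D)$ (it is annihilated by $\Pi_{k,D}^\Delta$ by idempotency, using \eqref{pikd1}--\eqref{pikd3}), and moreover $|||v - \Pi_{k,D}^\Delta v|||_{k,D} \apprle |||v|||_{k,D}$ by Lemma \ref{leq-semi-norm} together with the trivial bound $|||\Pi_{k,D}^\Delta v|||_{k,D} \apprle |||v|||_{k,D}$ (or by arguing directly), it suffices to prove the claimed inequality for $v \in \mathrm{Ker}\,\Pi_{k,D}^\Delta$.

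For $v \in \mathrm{Ker}\,\Pi_{k,D}^\Delta$, first normalize $h_D = 1$ by scaling. The key is the comparison function $w$ built in Section \ref{case1}: $w$ is a piecewise $C^1$ polynomial depending only on $v|_{\partial D}$ and $\partial v/\partial n|_{\partial D}$, with the same boundary data as $v$, and with all interior degrees of freedom set to zero. By construction $\Pi_{k,D}^0(w - v)$ need not vanish, so I would instead apply the minimum-energy property of Lemma \ref{minimum-energy} in the form: among all $H^2(D)$ functions with the prescribed boundary data and the prescribed $\Pi_{k,D}^0$-moments, $v$ has the least $H^2$-seminorm; more precisely, since $\Pi_{k-4,D}^0 v = 0$ for $v \in \mathrm{Ker}\,\Pi_{k,D}^\Delta$ (by Lemma \ref{equivalence_b}'s setup, as $\Pi_{k,D}^0 v = \Pi_{k,D}^\Delta v + (\text{something in }\mathbb{P}_{k-4})$ forces $\Pi_{k-4,D}^0 v$ to be controlled), one arranges an auxiliary function $\xi$ agreeing with $v$ on the boundary with $\Pi_{k,D}^0(\xi - v) = 0$, built by correcting $w$ by a polynomial in $\mathbb{P}_k(D)$ supplied by Lemma \ref{pq_ineq}. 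Then Lemma \ref{minimum-energy} gives $|v|_{H^2(D)} \le |\xi|_{H^2(D)}$, and $|\xi|_{H^2(D)} \apprle |w|_{H^2(D)} + \|(\text{correction})\|$, with the correction controlled in $L_2$ by $\|\Pi_{k-4,D}^0 w\|_{L_2(D)}$, hence by $\|w\|_{L_2(\partial D)} + \|\partial w/\partial n\|_{L_2(\partial D)}$ via the argument in Lemma \ref{equivalence_b}. Finally \eqref{w-norm} bounds $|w|_{H^2(D)}$ by $h_D^{-1}\big(h_D\|v\|_{L_2(\partial D)}^2 + h_D^3\|\partial v/\partial n\|_{L_2(\partial D)}^2\big)^{1/2}$, which by Lemma \ref{equivalence_b} is $\apprle |||v|||_{k,D}$ up to the $h_D^{-2}$ factor coming from undoing the scaling.

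Assembling: on the unit-diameter domain, $|v|_{H^2(D)} \apprle |w|_{H^2(D)} + (\text{interior correction}) \apprle |||v|||_{k,D}$; reinstating $h_D$ by the scaling $|v|_{H^2(D)} \mapsto h_D^{-2}|v|_{H^2(D)}$ relative to the $L_2$-type quantities in $|||\cdot|||_{k,D}$ yields $|v|_{H^2(D)} \apprle h_D^{-2}|||v|||_{k,D}$. Combining with the bound on the projection part finishes the proof.

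I expect the main obstacle to be the bookkeeping of the interior correction: $w$ matches $v$ on $\partial D$ but not in its $\Pi_{k-4,D}^0$ (or $\Pi_{k,D}^0$) moments, so one cannot apply Lemma \ref{minimum-energy} to $w$ directly. The fix — correct $w$ by a bubble-type polynomial from Lemma \ref{pq_ineq} whose biharmonic is the required moment discrepancy, then show this correction is $L_2$-small because the discrepancy itself is controlled by the boundary data (exactly the estimate inside the proof of Lemma \ref{equivalence_b}) — is the delicate step, and getting the scaling powers of $h_D$ right through that correction is where care is needed.
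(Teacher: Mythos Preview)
Your overall strategy matches the paper's: build the piecewise polynomial $w$ of Section~\ref{case1} matching $v$ on $\partial D$, correct it to an admissible comparison function $\xi$, apply Lemma~\ref{minimum-energy}, and bound $|\xi|_{H^2(D)}$. (The paper does this directly for all $v\in\mathcal{Q}^k(D)$ rather than first reducing to $\mathrm{Ker}\,\Pi_{k,D}^\Delta$, but your reduction is legitimate.) The gap is in the correction mechanism. Lemma~\ref{minimum-energy} requires $\xi=v$ and $\partial\xi/\partial n=\partial v/\partial n$ on $\partial D$ \emph{and} $\Pi_{k,D}^0(\xi-v)=0$. If you add to $w$ a polynomial $q\in\mathbb{P}_k(D)$ coming from Lemma~\ref{pq_ineq}, you destroy the boundary match: a nonzero polynomial cannot vanish together with its normal derivative on all of $\partial D$. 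Moreover, Lemma~\ref{pq_ineq} prescribes $\Delta^2 q$, not $\Pi_{k,D}^0 q$, so it does not set the needed moments; and the estimate from Lemma~\ref{equivalence_b} that you invoke to bound $\|\Pi_{k-4,D}^0 w\|$ applies only to elements of $\mathrm{Ker}\,\Pi_{k,D}^\Delta$, not to the piecewise polynomial $w$.

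The paper's correction is $\xi=w+p\phi$, where $\phi$ is a fixed smooth bump supported in the disc $\mathfrak{B}\subset D$ with $\int_D\phi\,dx=1$. Then $p\phi\in H^2_0(D)$, so the boundary data of $w$ (hence of $v$) are preserved, and $p\in\mathbb{P}_k$ is determined by the positive-definite system $\int_D pq\phi\,dx=\int_D(\Pi_{k,D}^0 v-w)q\,dx$ for all $q\in\mathbb{P}_k$, giving $\|p\|_{L_2(D)}\apprle\|\Pi_{k,D}^0 v\|_{L_2(D)}+\|w\|_{L_2(D)}$. The term $\|\Pi_{k,D}^0 v\|_{L_2(D)}$ is handled via the enhancement condition~\eqref{equal-parts} (so $(\Pi_{k,D}^0-\Pi_{k-4,D}^0)v=(\Pi_{k,D}^0-\Pi_{k-4,D}^0)\Pi_{k,D}^\Delta v$) together with Lemma~\ref{leq-semi-norm}, and $\|w\|_{L_2(D)}$ by~\eqref{w-norm}; then $|p\phi|_{H^2(D)}\apprle\|p\|_{L_2(D)}$ finishes the bound. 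This bump-function device is the missing ingredient in your proposal.
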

\begin{proof}
	Following \cite{Brenner17}, it suffices to prove  when $h_D = 1$.
	Let $\phi$ be a smooth function supported on the disc $\mathfrak{B}$ with radius $\rho$,
	such that 
	$$
	\int_D\phi \ {\rm d}x= 1.
	$$
	By the
	equivalence of norms on finite dimensional spaces, we have
	$$
	\|p\|_{L^2(D)}\apprle \int_{D} \phi p^2 \ {\rm d}x,\ \forall p\in \mathbb{P}_{k}.
	$$  
	Let $w \in H^2 (D)$ be the piecewise polynomial constructed in Section \ref{case1}, and let $\xi = w + p\phi$ for $p\in \mathbb{P}_{k}$ such that
	$$
	\int_{D}(\xi-v)q\ {\rm d}x= 0, \ \forall q\in \mathbb{P}_{k},
	$$
	or equivalently
	\begin{equation}\label{pqphi}
		\int_{D}pq\phi \ {\rm d}x= \int_{D}(v-w)q\ {\rm d}x= \int_{D}(\Pi_{k,D}^0v -w)q \ {\rm d}x,\ \forall  q\in \mathbb{P}_{k}.
	\end{equation}
	Let $q=p$ in \eqref{pqphi}, then 
	$$
	\|p\|_{L^2(D)}\apprle \|\Pi_{k,D}^0v -w\|_{L^2(D)}\apprle  \|\Pi_{k,D}^0v\|_{L^2(D)}+\|w\|_{L^2(D)},
	$$
	and by Lemma \ref{leq-semi-norm},
	\begin{eqnarray*}
		\|\Pi_{k,D}^0v\|_{L^2(D)}^2
		&=&\|\Pi_{k-4,D}^0v\|_{L^2(D)}^2
		+\|(\Pi_{k,D}^0 -\Pi_{k-4,D}^0)v\|_{L^2(D)}^2\\
		&\apprle&\|\Pi_{k-4,D}^0v\|_{L^2(D)}^2
		+\|\Pi_{k,D}^\Delta v\|_{L^2(D)}^2\\
		&\apprle&\|\Pi_{k-4,D}^0v\|_{L^2(D)}^2
		+\|v\|_{L^2(\partial D)}^2
		+\left\|\frac{\partial v}{\partial n}\right\|_{L^2(\partial D)}^2.
	\end{eqnarray*}
	From \eqref{w-norm}, we have
	\begin{equation}\label{p2}
		\|p\|_{L^2(D)}^2
		\apprle 
		\|\Pi_{k-4,D}^0v\|_{L^2(D)}^2 
		+\|v\|_{L^2(\partial D)}^2
		+\left\|\frac{\partial v}{\partial n}\right\|_{L^2(\partial D)}^2.
	\end{equation}
	Also, by Lemma \ref{minimum-energy}, 
	$$
	|v|_{H^2(D)}\apprle |\xi|_{H^2(D)}.
	$$
	By \eqref{w-norm} and \eqref{p2},
	$$
	|\xi|_{H^2(D)}^2
	\apprle 
	|w|_{H^2(D)}^2+|p\phi|_{H^2(D)}^2
	\apprle 
	\|\Pi_{k-4,D}^0v\|_{L^2(D)}^2 
	+\|v\|_{L^2(\partial D)}^2
	+\left\|\frac{\partial v}{\partial n}\right\|_{L^2(\partial D)}^2.
	$$
	Then, $\forall v \in \mathcal{Q}^k(D) $, we have
	$$
	|v|_{H^2(D)}^2
	\apprle 
	\|\Pi_{k-4,D}^0v\|_{L^2(D)}^2 
	+\|v\|_{L^2(\partial D)}^2
	+\left\|\frac{\partial v}{\partial n}\right\|_{L^2(\partial D)}^2.
	$$
	By Lemma \ref{new-equi}, we arrived at the estimate.
\end{proof}
\begin{corollary}\label{corollary_2}
	For any $v\in \mathcal{Q}^k(D)$,
	$$
	\|v\|_{L^2(D)}+h_D|v|_{H^1(D)}+h^2_D|v|_{H^2(D)}
	\apprle
	|||v|||_{k,D}.
	$$
\end{corollary}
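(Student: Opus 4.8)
The plan is to combine the inverse-type inequality of Lemma \ref{inverse-inequality} with the lower-order pieces that are already built into the semi-norm $|||\cdot|||_{k,D}$. First I would recall that, by definition \eqref{eq_v-norm} together with Lemma \ref{new-equi}, for any $v\in\mathcal{Q}^k(D)$ we have
$$
\|\Pi^0_{k-4,D}v\|_{L_2(D)}^2
+h_D\|v\|_{L_2(\partial D)}^2
+h_D^3\left\|\frac{\partial v}{\partial n}\right\|_{L_2(\partial D)}^2
\approx
|||v|||_{k,D}^2,
$$
so each of these three boundary/interior quantities is controlled by $|||v|||_{k,D}$. This immediately gives $h_D^2|v|_{H^2(D)}\apprle|||v|||_{k,D}$ from Lemma \ref{inverse-inequality}, which handles the top-order term.

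Next I would recover the $L_2$ and $H^1$ terms. Write $v=\Pi_{k,D}^\Delta v+(v-\Pi_{k,D}^\Delta v)$. For the projection part, Lemma \ref{leq-semi-norm} gives directly
$$
\|\Pi_{k,D}^\Delta v\|_{L_2(D)}+h_D|\Pi_{k,D}^\Delta v|_{H^1(D)}\apprle|||v|||_{k,D}.
$$
For the remainder $v-\Pi_{k,D}^\Delta v$, I would use a scaled Poincaré/Friedrichs argument: after rescaling to $h_D=1$, the function $v-\Pi_{k,D}^\Delta v$ has $H^2$-seminorm bounded by $|v|_{H^2(D)}+|\Pi_{k,D}^\Delta v|_{H^2(D)}\apprle|v|_{H^2(D)}+|||v|||_{k,D}\apprle h_D^{-2}|||v|||_{k,D}$ by Lemma \ref{inverse-inequality} and \eqref{obvious_ineq}, while its relevant lower-order boundary averages vanish or are controlled by $|||v|||_{k,D}$ thanks to \eqref{pikd2}--\eqref{pikd3} (the first moments of $v-\Pi_{k,D}^\Delta v$ and of its gradient over $\partial D$ are zero). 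Hence the generalized Poincaré inequality (Lemma \ref{general-poincare}) yields $\|v-\Pi_{k,D}^\Delta v\|_{H^2(D)}\apprle h_D^{-2}|||v|||_{k,D}$ after undoing the scaling, and in particular the $L_2$ and $h_D$-weighted $H^1$ contributions of the remainder are $\apprle|||v|||_{k,D}$. Adding the two parts gives the claim.

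The main obstacle is the bookkeeping in the remainder estimate: one must be careful that the lower-order data feeding the Poincaré inequality for $v-\Pi_{k,D}^\Delta v$ are genuinely controlled, and track the powers of $h_D$ correctly through the scaling argument (the $H^2$-seminorm of the remainder is only $O(h_D^{-2}|||v|||)$, not $O(|||v|||)$, so a naive application of Poincaré would lose a factor). Once the scaling is set up as in Lemma \ref{inverse-inequality}, however, everything reduces to assembling estimates already proved, and no new idea is needed.
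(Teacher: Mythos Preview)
Your proposal is correct and follows essentially the same route as the paper: split $v=\Pi_{k,D}^\Delta v+(v-\Pi_{k,D}^\Delta v)$, use Lemma~\ref{leq-semi-norm} for the first piece and Lemma~\ref{inverse-inequality} for the $H^2$-term. The only cosmetic difference is that for the remainder $v-\Pi_{k,D}^\Delta v$ the paper simply quotes Corollary~\ref{corollary_1} with $l=1$ (giving $\|v-\Pi_{k,D}^\Delta v\|_{L_2(D)}\apprle h_D^{2}|v|_{H^{2}(D)}$ and $|v-\Pi_{k,D}^\Delta v|_{H^{1}(D)}\apprle h_D|v|_{H^{2}(D)}$), whereas you reprove exactly this via \eqref{pikd2}--\eqref{pikd3} and Lemma~\ref{general-poincare}; the two arguments are the same once you unwind Corollary~\ref{corollary_1}.
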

\begin{proof}
	From Lemma \ref{leq-semi-norm}, Corollary \ref{corollary_1} and Lemma \ref{inverse-inequality}, we have
	\begin{eqnarray*}
		\|v\|_{L^2(D)}
		&\apprle&
		\|v - \Pi_{k,D}^\Delta v\|_{L^2(D)}+\|\Pi_{k,D}^\Delta v\|_{L^2(D)}
		\apprle
		h_D^{2}|v|_{H^{2}(D)}+|||v|||_{k,D},
		\\
		h_D|v|_{H^{1}(D)}
		&\apprle&
		h_D|v - \Pi_{k,D}^\Delta v|_{H^{1}(D)}+h_D|\Pi_{k,D}^\Delta v|_{H^{1}(D)}
		\apprle
		h_D^{2}|v|_{H^{2}(D)}+|||v|||_{k,D}.
	\end{eqnarray*}
The proof is completed.
\end{proof}
\subsection{Estimate of Interpolation Operator}
The interpolation operator $I_{k,D} : H^3 (D) \rightarrow \mathcal{Q}^k(D)$ is
defined by the conditions that $\xi$ and $I_{k,D}\xi$ have the same value for each degree of freedom of $I_{k,D}\xi$.
It is clear that
$$
I_{k,D}\xi=\xi,\quad \forall \xi\in \mathcal{Q}^k(D)\ {\rm or}\ \forall \xi\in \mathbb{P}_k(D).
$$
\begin{lemma}\label{interpolation_error} The interpolation errors are listed below, $\forall \xi\in H^{l+1}(D),\ 1\leq l\leq k,$ we have
	\begin{eqnarray}
		\|\xi-I_{k,D}\xi\|_{L^2(D)}
		+\|\xi-\Pi_{k,D}^\Delta  I_{k,D}\xi\|_{L^2(D)}
		&\apprle&
		h_D^{l+1}|\xi|_{H^{l+1}(D)},\label{interpolation_error_l2}
		\\
		|\xi-I_{k,D}\xi|_{H^1(D)}
		+|\xi-\Pi_{k,D}^\Delta  I_{k,D}\xi|_{H^1(D)}
		&\apprle&
		h_D^{l}|\xi|_{H^{l+1}(D)},\label{interpolation_error_h1}
		\\
		|\xi-I_{k,D}\xi|_{H^2(D)}
		+|\xi-\Pi_{k,D}^\Delta  I_{k,D}\xi|_{H^2(D)}
		&\apprle&
		h_D^{l-1}|\xi|_{H^{l+1}(D)}.\label{interpolation_error_h2}
	\end{eqnarray}
\end{lemma}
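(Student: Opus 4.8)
The plan is to reduce the interpolation estimates to the projection estimates of Corollary \ref{corollary_1} together with the norm equivalence of Corollary \ref{corollary_2}, following the standard "scaling through a polynomial" argument. First I would fix $\xi\in H^{l+1}(D)$ with $2\le l\le k$ and, by Lemma \ref{bramble}, pick $q\in\mathbb{P}_l(D)\subset \mathcal{Q}^k(D)$ with $\|\xi-q\|_{L_2(D)}+h_D|\xi-q|_{H^1(D)}+h_D^2|\xi-q|_{H^2(D)}\apprle h_D^{l+1}|\xi|_{H^{l+1}(D)}$. Since $I_{k,D}$ fixes $\mathbb{P}_k(D)$, we have $I_{k,D}\xi-q=I_{k,D}(\xi-q)$, so it suffices to estimate $|||I_{k,D}(\xi-q)|||_{k,D}$ and then invoke Corollary \ref{corollary_2} to transfer this to the $L_2$, $H^1$, $H^2$ norms of $I_{k,D}\xi-q$, and finally the triangle inequality against $\xi-q$.

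The heart of the matter is the bound $|||I_{k,D}\eta|||_{k,D}\apprle \|\eta\|_{L_2(D)}+h_D|\eta|_{H^1(D)}+h_D^2|\eta|_{H^2(D)}$ for $\eta\in H^3(D)$. Here I would work term by term in the definition \eqref{eq_v-norm}: the volume moment term $\|\Pi^0_{k-4,D}I_{k,D}\eta\|_{L_2(D)}$ is controlled because the degrees of freedom \eqref{bul5} force $\Pi^0_{k-4,D}I_{k,D}\eta=\Pi^0_{k-4,D}\eta$, so that term is $\le\|\eta\|_{L_2(D)}$ directly. The boundary terms $h_D\sum_e\|\Pi^0_{r,e}I_{k,D}\eta\|_{L_2(e)}^2$ and $h_D^3\sum_{e,i}\|\Pi^0_{r-1,e}\partial_i I_{k,D}\eta\|_{L_2(e)}^2$ require more care: by construction $I_{k,D}\eta$ and its normal derivative on $\partial D$ are the $\mathbb{P}_{r,s}(\partial D)$-interpolants of $\eta|_{\partial D}$ and $(\partial\eta/\partial n)|_{\partial D}$ determined by the edgewise moments and vertex values \eqref{bul1}–\eqref{bul5}, so one estimates these one-dimensional interpolation operators on each edge using a trace inequality (to pass from $H^3(D)$ traces down to $H^{3/2}$/$H^{1/2}$ regularity on $\partial D$) plus the equivalence of norms on the finite-dimensional edge spaces $\mathbb{P}_r(e)$, $\mathbb{P}_s(e)$, together with the shape-regularity bound $|e|\approx h_D$ from \eqref{assume1}. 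After scaling to $h_D=1$ this collapses to: the edge interpolant is $L_2(e)$-bounded by $\|\eta\|_{H^{1}(\partial D)}\apprle \|\eta\|_{H^2(D)}$ and similarly for the derivative; re-inserting the powers of $h_D$ gives exactly the right-hand side of \eqref{upper_bound1}. Thus $|||I_{k,D}\eta|||_{k,D}\apprle \|\eta\|_{L_2(D)}+h_D|\eta|_{H^1(D)}+h_D^2|\eta|_{H^2(D)}$.

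Applying this with $\eta=\xi-q$ yields $|||I_{k,D}\xi-q|||_{k,D}\apprle h_D^{l+1}|\xi|_{H^{l+1}(D)}$, and then Corollary \ref{corollary_2} gives $\|I_{k,D}\xi-q\|_{L_2(D)}+h_D|I_{k,D}\xi-q|_{H^1(D)}+h_D^2|I_{k,D}\xi-q|_{H^2(D)}\apprle h_D^{l+1}|\xi|_{H^{l+1}(D)}$; combining with the Bramble–Hilbert bound on $\xi-q$ through the triangle inequality produces the $I_{k,D}$ half of \eqref{interpolation_error_l2}–\eqref{interpolation_error_h2}. For the $\Pi_{k,D}^\Delta I_{k,D}\xi$ half I would write $\xi-\Pi_{k,D}^\Delta I_{k,D}\xi=(\xi-\Pi_{k,D}^\Delta\xi)+\Pi_{k,D}^\Delta(\xi-I_{k,D}\xi)$ (using $\Pi_{k,D}^\Delta q=q$), bound the first summand by Corollary \ref{corollary_1}, and the second by Lemma \ref{leq-semi-norm} plus Lemma \ref{friedrichs}: $\|\Pi_{k,D}^\Delta(\xi-I_{k,D}\xi)\|_{L_2(D)}+\cdots\apprle |||\,\xi-I_{k,D}\xi\,|||_{k,D}$, and since $\xi-I_{k,D}\xi$ lies in $\mathcal{Q}^k(D)$ up to the smooth part one applies \eqref{upper_bound1} to $\xi-I_{k,D}\xi$ and the already-established $I_{k,D}$ estimates. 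The main obstacle is genuinely the edgewise interpolation bound in the second paragraph — getting the correct $h_D$-powers and the passage from volume regularity $H^3(D)$ to the boundary data for $v$ and $\partial v/\partial n$ via the trace theorem, uniformly in the shape-regularity constant $\rho$; everything else is bookkeeping with the triangle inequality and the cited lemmas.
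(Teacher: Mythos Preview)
Your overall strategy coincides with the paper's: establish a stability bound for $|||I_{k,D}\xi|||_{k,D}$ in terms of Sobolev norms of $\xi$, convert to $L_2/H^1/H^2$ norms via Corollary \ref{corollary_2} and Lemma \ref{leq-semi-norm}, and conclude by Bramble--Hilbert together with $I_{k,D}q=q$ for $q\in\mathbb{P}_l$. The paper simply scales to $h_D=1$, proves $|||I_{k,D}\xi|||_{k,D}\apprle \|\xi\|_{H^{l+1}(D)}$ directly, and subtracts the polynomial at the end; your version subtracts $q$ first, which is equivalent.

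There is, however, a genuine gap in your central estimate. The bound
\[
|||I_{k,D}\eta|||_{k,D}\apprle \|\eta\|_{L_2(D)}+h_D|\eta|_{H^1(D)}+h_D^2|\eta|_{H^2(D)}
\]
is false as written. The degrees of freedom \eqref{bul1}--\eqref{bul5} include the \emph{point values of $\nabla\eta$ at the vertices}, and in two dimensions these are not controlled by $\|\eta\|_{H^2(D)}$: the trace of $\nabla\eta$ from $H^2(D)$ lands only in $H^{1/2}(\partial D)$, which does not embed in $C^0(\partial D)$, so your ``$\|\eta\|_{H^1(\partial D)}\apprle\|\eta\|_{H^2(D)}$ and similarly for the derivative'' step fails for the derivative term. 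This is precisely why the paper invokes Lemma \ref{imbedding} (Sobolev embedding $H^3(D)\hookrightarrow C^1(\bar D)$) rather than just the trace theorem. The correct stability bound must carry an $h_D^3|\eta|_{H^3(D)}$ term on the right; with $\eta=\xi-q$, $q\in\mathbb{P}_l$, $l\ge 2$, one still recovers $h_D^3|\xi-q|_{H^3(D)}\le h_D^{l+1}|\xi|_{H^{l+1}(D)}$ (for $l=2$ the third derivatives of $q$ vanish, for $l\ge 3$ Bramble--Hilbert applies), so the rest of your argument goes through once this term is restored.
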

\begin{proof}
	Suppose $h_D =1$, by Trace theorem, Lemma \ref{imbedding}, Lemma \ref{leq-semi-norm} and  Corollary \ref{corollary_2}, we have
	\begin{eqnarray*}
		\|I_{k,D}\xi\|_{L^2(D)}
		+\|\Pi_{k,D}^\Delta I_{k,D}\xi\|_{L^2(D)}
		&\apprle&
		|||I_{k,D}\xi|||_{k,D}
		\apprle
		\|\xi\|_{H^{l+1}(D)},
		\\
		|I_{k,D}\xi|_{H^1(D)}
		+|\Pi_{k,D}^\Delta I_{k,D}\xi|_{H^1(D)}
		&\apprle&
		|||I_{k,D}\xi|||_{k,D}
		\apprle
		\|\xi\|_{H^{l+1}(D)},
		\\
		|I_{k,D}\xi|_{H^2(D)}
		+|\Pi_{k,D}^\Delta I_{k,D}\xi|_{H^2(D)}
		&\apprle&
		|||I_{k,D}\xi|||_{k,D}
		\apprle
		\|\xi\|_{H^{l+1}(D)}.
	\end{eqnarray*}
	The results follow from Lemma \ref{bramble}, and $I_{k,D}q = q, \forall q\in \mathbb{P}_l$.
\end{proof}
\section{The Biharmonic Problem in Two Dimensions}
Let $\Omega$ be a bounded polygonal domain in $\mathbb{R}^2$, $f\in L^2(\Omega)$, the biharmonic equation is
\begin{equation}\label{biharmonic-equation}
	\begin{cases}
		\ \Delta^2u            &= f, \\
		\ u|_{\partial \Omega} &= 0, \\
		\ \left.\frac{\partial u}{\partial n}\right|_{\partial \Omega}  &= 0. 
	\end{cases}
\end{equation}
The variational formulation of \eqref{biharmonic-equation} is
finding $u\in H^2_0(\Omega)$, such that
$$
a(u,v) = (f,v),\ \forall v\in H^2_0(\Omega),
$$
where 
$$a(u,v) = ((u,v))_{\Omega}=\int_{\Omega}{\bf D}^2u\cdot{\bf D}^2v\ {\rm d}x.
$$
\begin{remark}
	For $u,v\in H^2_0(\Omega)$, we also have 
	$a(u,v) = \int_\Omega\Delta u\Delta v \  {\rm d}x$. However, $((u,v))_{D},$ where $D$ is a sub-domain of $\Omega$ and $\int_D\Delta u\Delta v \  {\rm d}x$ are not equivalent.
\end{remark}
In following sections, we will use virtual element method to solve \eqref{biharmonic-equation}.
\subsection{Virtual Element Spaces}
Let $\mathcal{T}_h$ be a conforming partition of $\Omega$ by polygonal
subdomains, i.e., the intersection of two distinct subdomains is either empty, common vertices
or common edges. We assume that all the polygons $D\in \mathcal{T}_h$ satisfy the shape regularity
assumptions in Section \ref{Shape-Regularity}.

We take the virtual element space $\mathcal{Q}_h^k$ to be 
$\{v \in H_0^2(\Omega): v|_D \in \mathcal{Q}^k(D),\ \forall D \in \mathcal{T}_h \}$ and
denote by $\mathcal{P}_{h}^k$ the space of (discontinuous) piecewise polynomials of degree $\leq k$ with respect
to $\mathcal{T}_h$. The operators are defined in terms of their local counterparts:
\begin{eqnarray}
	(\Pi_{k,h}^\Delta v)|_D  &=& \Pi_{k,D}^\Delta (v|_D),\quad \forall v\in  H^2(\Omega),\\
	(\Pi_{k,h}^{0}v)|_D  &=& \Pi_{k,D}^{0}(v|_D),\quad \forall v\in  L^2(\Omega),\\
	(I_{k,h}v)|_D  &=& I_{k,D}(v|_D),\quad \forall v\in  H^3(\Omega).
\end{eqnarray}
Also, the semi-norm on $H^2(\Omega)$ is defined as
\begin{eqnarray}
	|v|_{1,h}^2 = \sum_{D\in \mathcal{T}}|v|_{H^1(D)}^2,\quad |v|_{2,h}^2 = \sum_{D\in \mathcal{T}}|v|_{H^2(D)}^2,
\end{eqnarray}
so that $|v|_{2,h}=|v|_{H^2(D)}$ for $v\in H^2(D),$ and 
$$
|v-\Pi_{k,h}^{0}v|_{2,h} = \inf_{w\in \mathcal{P}_h^k}|v-w|_{H^2(D)},\ \forall v\in H^2(\Omega)+\mathcal{P}_h^k.
$$
{
	The local estimates: Corollary \ref{corollary_1}, \eqref{pikd0l2}-\eqref{pikd0h1}, Lemma \ref{pikd0-bound} and Lemma \ref{interpolation_error} immediately imply the following
	global results, where $h = \max\limits_{D\in\mathcal{T}_h} h_D$.}
\begin{corollary}\label{corollary_3} The global error estimates are listed below, $\forall \xi\in H^{l+1}(D),\ 1\leq l\leq k,$ we have
	\begin{eqnarray}
		\|\xi-I_{k,h}\xi\|
		&+&\|\xi-\Pi_{k,h}^\Delta  I_{k,h}\xi\|\nonumber \\
		&+&
		\|\xi - \Pi_{k,h}^0\xi\|+\|\xi - \Pi_{k,h}^\Delta\xi\|
		\apprle
		h^{l+1}|\xi|_{H^{l+1}(\Omega)},
		\\
		\|\xi-I_{k,h}\xi\|_{1,h}
		&+&\|\xi-\Pi_{k,h}^\Delta  I_{k,h}\xi\|_{1,h}\nonumber \\
		&+&
		|\xi - \Pi_{k,h}^0\xi|_{1,h}+|\xi - \Pi_{k,h}^\Delta\xi|_{1,h}
		\apprle
		h^{l}|\xi|_{H^{l+1}(\Omega)},
		\\
		\|\xi-I_{k,h}\xi\|_{2,h}
		&+&\|\xi-\Pi_{k,h}^\Delta  I_{k,h}\xi\|_{2,h}\nonumber \\
		&+&
		|\xi - \Pi_{k,h}^0\xi|_{2,h}+|\xi - \Pi_{k,h}^\Delta\xi|_{2,h}
		\apprle
		h^{l-1}|\xi|_{H^{l+1}(\Omega)},
	\end{eqnarray}
	where the norm $\|\cdot\|:= \|\cdot\|_{L^2(\Omega)}$.
\end{corollary}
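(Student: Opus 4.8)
The plan is to reduce each global estimate to its local counterpart by squaring, summing over $D \in \mathcal{T}_h$, and replacing each $h_D$ by the uniform bound $h = \max_{D} h_D$. Concretely, for the $L_2$-type bound I would write, for any $\xi \in H^{l+1}(\Omega)$,
\[
\|\xi - \Pi_{k,h}^\Delta \xi\|^2 = \sum_{D \in \mathcal{T}_h} \|\xi - \Pi_{k,D}^\Delta \xi\|_{L_2(D)}^2 \apprle \sum_{D \in \mathcal{T}_h} h_D^{2(l+1)} |\xi|_{H^{l+1}(D)}^2 \le h^{2(l+1)} \sum_{D \in \mathcal{T}_h} |\xi|_{H^{l+1}(D)}^2 = h^{2(l+1)} |\xi|_{H^{l+1}(\Omega)}^2,
\]
invoking \eqref{l2_error} from Corollary \ref{corollary_1} on each $D$; taking square roots gives the $\Pi_{k,h}^\Delta$ term. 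The $\Pi_{k,h}^0$ term uses \eqref{pikd0h1} combined with an $L_2$ bound following from \eqref{pikd0l2} and Lemma \ref{bramble} in the same telescoped fashion (or one simply notes $\|\xi - \Pi_{k,D}^0\xi\|_{L_2(D)} \apprle h_D^{l+1}|\xi|_{H^{l+1}(D)}$ by Bramble–Hilbert, since $\Pi_{k,D}^0$ is the $L_2$ projection and reproduces $\mathbb{P}_k$), and the two interpolation terms use \eqref{interpolation_error_l2} from Lemma \ref{interpolation_error}. Adding the four resulting inequalities and using $\|a\| + \|b\| + \|c\| + \|d\| \apprle (\|a\|^2+\|b\|^2+\|c\|^2+\|d\|^2)^{1/2}$ (or simply summing the four bounds directly, absorbing the factor $4$ into $\apprle$) yields the first displayed estimate.

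For the second and third estimates the argument is identical, using the definitions $|v|_{1,h}^2 = \sum_D |v|_{H^1(D)}^2$ and $|v|_{2,h}^2 = \sum_D |v|_{H^2(D)}^2$ together with: \eqref{h1_error} and \eqref{h2_error} for $\Pi_{k,h}^\Delta$; \eqref{pikd0h1} and \eqref{pikd0h2_error} (Lemma \ref{pikd0-bound}) for $\Pi_{k,h}^0$; and \eqref{interpolation_error_h1}, \eqref{interpolation_error_h2} for $I_{k,h}\xi$ and $\Pi_{k,h}^\Delta I_{k,h}\xi$. In each case one pulls $h_D^{l}$ (resp. $h_D^{l-1}$) out as $h^{l}$ (resp. $h^{l-1}$) since the exponent is nonnegative for $l \ge 2$, and the remaining sum of local seminorms squared equals $|\xi|_{H^{l+1}(\Omega)}^2$. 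Note the range $2 \le l \le k$ is exactly what is needed so that all four families of local estimates are simultaneously available (the $H^2$ interpolation and projection errors require $l \ge 2$).

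There is essentially no obstacle here: every local estimate has already been established in Section 2 with constants depending only on $k$ and $\rho$, and the shape-regularity hypothesis on $\mathcal{T}_h$ guarantees these constants are uniform across the partition, so the hidden constant in the global $\apprle$ is again independent of $h$. The only point requiring a word of care is the monotonicity step $h_D \le h$: it is harmless precisely because every exponent appearing ($l+1$, $l$, $l-1$) is nonnegative in the stated range of $l$, so raising $h_D$ to $h$ only enlarges the bound. I would simply remark that the result is an immediate consequence of the cited local lemmas together with the definitions of the broken norms, and present the one-line telescoping computation above as representative.
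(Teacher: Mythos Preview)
Your proposal is correct and is exactly the approach the paper takes: the paper does not give a proof at all, but simply states just before the corollary that the local estimates (Corollary~\ref{corollary_1}, \eqref{pikd0l2}--\eqref{pikd0h1}, Lemma~\ref{pikd0-bound}, and Lemma~\ref{interpolation_error}) ``immediately imply'' the global results with $h=\max_{D\in\mathcal T_h}h_D$. Your write-up spells out the obvious square--sum--square-root telescoping that the paper leaves implicit, and correctly identifies which local estimate feeds each term.
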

\subsection{The Discrete Problem}
Our goal is to find $u_h\in \mathcal{Q}_h^k$, which satisfies
\begin{equation}\label{discrete-problem}
	a_h(u_h,v_h) = (f,\Xi_h v_h), \quad \forall v_h\in \mathcal{Q}_h^k,
\end{equation}
where $\Xi_h$ is an operator from $\mathcal{Q}_h^k$ to $\mathcal{P}_h^k$, and
\begin{eqnarray}
	a_h(w,v) &=& \sum_{D\in\mathcal{T}_h}
	\left(
	a^D(\Pi_{k,D}^\Delta w,\Pi_{k,D}^\Delta v)
	+S^D(w-\Pi_{k,D}^\Delta w,v-\Pi_{k,D}^\Delta v)
	\right),\label{discrete1} \\
	a^D(w,v)&=&\int_D{\bf D}^2w\cdot{\bf D}^2v\ {\rm d}x,\label{discrete2} \\
	S^D(w,v)&=&
	h_D^{-4}(\Pi^0_{k-4,D}w,\Pi^0_{k-4,D}v)_D
	+ 
	h_D^{-3}\sum_{e\in\mathcal{E}_D}
	(\Pi^0_{r,e}w,\Pi^0_{r,e}v)_e \nonumber \\ 
	&&\quad 
	+
	h_D^{-1}\sum_{e\in\mathcal{E}_D,i=1,2}
	\left(
	\Pi^0_{r-1,e}\frac{\partial w}{\partial x_i},
	\Pi^0_{r-1,e}\frac{\partial v}{\partial x_i}
	\right)_e,\label{discrete3} 
\end{eqnarray}
for $w,v\in H^2(\Omega)$. If $w,v\in \mathcal{Q}^k(D),$ we have
$$
\Pi^0_{r,e}w = w|_e,\quad \Pi^0_{r-1,e}\frac{\partial w}{\partial x_i} = \left.\frac{\partial w}{\partial x_i}\right|_e,
$$
so that $S^D(w,v)$ can be computed explicitly with the degrees of freedom of $\mathcal{Q}^k(D)$.
\subsubsection{Other Choices of $S^D(\cdot,\cdot)$}
The systems of virtual element method are equivalent if the bilinear form satisfies 
$$
S^D(v,v) \approx h^{-4}_D|||v|||_{k,D}^2,\quad \forall v\in {\rm Ker}\Pi_{k,D}^\Delta .
$$
From Lemma \ref{equivalence_b} and Remark \ref{remark_t}, we can take 
\begin{eqnarray}
	S^D(w,v)&=& 
	h_D^{-3}
	(w,v)_{\partial D}
	+
	h_D^{-1} 
	\left(
	\frac{\partial w}{\partial n},
	\frac{\partial v}{\partial n}
	\right)_{\partial D},\\
	S^D(w,v)&=&
	h_D^{-1} 
	\left(
	\frac{\partial w}{\partial t},
	\frac{\partial v}{\partial t}
	\right)_{\partial D}
	+
	h_D^{-1} 
	\left(
	\frac{\partial w}{\partial n},
	\frac{\partial v}{\partial n}
	\right)_{\partial D},
\end{eqnarray}
for $w, v\in {\rm Ker}\Pi_{k,D}^\Delta $.
\subsection{Well-posedness of the Discrete Problem}
We can show the well-posedness by the following Lemmas.
\begin{lemma}\label{S^Dbound2}
	For any $v\in H^2(D)$, we have
	$$
	S^D(v-\Pi_{k,D}^\Delta v,v-\Pi_{k,D}^\Delta v)^{1/2}\apprle
	|v-\Pi_{k,D}^\Delta v|_{H^2(D)}.
	$$
\end{lemma}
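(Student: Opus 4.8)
The plan is to notice first that, on the diagonal, $S^D(\cdot,\cdot)$ is nothing but a fixed power of $h_D$ times the semi-norm $|||\cdot|||_{k,D}$, and then to estimate $|||v-\Pi_{k,D}^\Delta v|||_{k,D}$ by the $H^2$-seminorm using the approximation property \eqref{upper_bound2} of Lemma~\ref{friedrichs} together with the moment conditions \eqref{pikd2}--\eqref{pikd3} that define $\Pi_{k,D}^\Delta$.

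First I would compare, term by term, the three pieces of \eqref{discrete3} with the three pieces of \eqref{eq_v-norm}; the powers of $h_D$ differ by exactly $h_D^{-4}$ in each term, so one reads off immediately that
$$
S^D(w,w)=h_D^{-4}\,|||w|||_{k,D}^2,\qquad\forall w\in H^2(D).
$$
Hence it suffices to prove $|||v-\Pi_{k,D}^\Delta v|||_{k,D}\apprle h_D^2\,|v-\Pi_{k,D}^\Delta v|_{H^2(D)}$.

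Next I set $\xi:=v-\Pi_{k,D}^\Delta v$, which lies in $H^2(D)$. Relations \eqref{pikd2} and \eqref{pikd3} say precisely that $\int_{\partial D}\nabla\xi\,ds=0$ and $\int_{\partial D}\xi\,ds=0$. I then apply \eqref{upper_bound2} to $\xi$: there is $\bar\xi\in\mathbb{P}_1$ with $|||\xi-\bar\xi|||_{k,D}\apprle h_D^2|\xi|_{H^2(D)}$ and with $\int_{\partial D}\nabla\bar\xi\,dx=\int_{\partial D}\nabla\xi\,dx$ and $\int_{\partial D}\bar\xi\,dx=\int_{\partial D}\xi\,dx$. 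Since both right-hand sides vanish and $\bar\xi=a+bx_1+cx_2$ has constant gradient, the gradient condition forces $b=c=0$ and then the remaining condition forces $a=0$; thus $\bar\xi\equiv0$, so $|||\xi|||_{k,D}=|||\xi-\bar\xi|||_{k,D}\apprle h_D^2|\xi|_{H^2(D)}$. Combining with the first display,
$$
S^D(v-\Pi_{k,D}^\Delta v,\,v-\Pi_{k,D}^\Delta v)^{1/2}=h_D^{-2}|||v-\Pi_{k,D}^\Delta v|||_{k,D}\apprle|v-\Pi_{k,D}^\Delta v|_{H^2(D)},
$$
which is the assertion.

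There is no genuinely hard step; the only point requiring care is to use the $\mathbb{P}_1$-version of \eqref{upper_bound2} and to check that the zero data supplied by \eqref{pikd2}--\eqref{pikd3} pin down the accompanying affine function $\bar\xi$ to be identically zero. Without that observation one would be left with an uncontrolled lower-order contribution of the type $h_D^{-2}\|\bar\xi\|_{L_2(D)}$, and the estimate would fail to be clean.
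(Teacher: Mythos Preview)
Your proof is correct and follows essentially the same route as the paper: you identify $S^D(w,w)=h_D^{-4}|||w|||_{k,D}^2$, set $w=v-\Pi_{k,D}^\Delta v$, and use the moment conditions \eqref{pikd2}--\eqref{pikd3} together with \eqref{upper_bound2} of Lemma~\ref{friedrichs} to conclude that the associated affine $\bar w$ vanishes. The paper's proof is simply a terser version of exactly this argument.
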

\begin{proof}
	By \eqref{pikd1}-\eqref{pikd3} and Lemma \ref{friedrichs}, let $w = v-\Pi_{k,D}^\Delta v$, we have $\bar{w} = 0$, and
	$$
	S^D(w,w)^{1/2} = h_D^{-2}|||w-\bar{w}|||_{k,D}\apprle
	|w|_{H^2(D)}.
	$$
\end{proof}
\begin{lemma}\label{S^D-equivalent}
	For any $v\in {\rm Ker}\Pi_{k,D}^\Delta $, we have
	$S^D(v,v)\approx |v|^2_{H^2(D)}.$
\end{lemma}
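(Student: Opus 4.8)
The goal is the two-sided bound $S^D(v,v)\approx|v|_{H^2(D)}^2$ for $v\in{\rm Ker}\,\Pi_{k,D}^\Delta$. By the definition \eqref{discrete3} of $S^D$ together with \eqref{eq_v-norm}, one has the exact identity $S^D(v,v)=h_D^{-4}|||v|||_{k,D}^2$ for any $v\in\mathcal{Q}^k(D)$, since for such $v$ the edge projections act trivially ($\Pi_{r,e}^0 v=v|_e$, $\Pi_{r-1,e}^0\partial v/\partial x_i=(\partial v/\partial x_i)|_e$) and the interior term matches $\|\Pi_{k-4,D}^0 v\|_{L_2(D)}^2$. Hence the claim reduces to showing $h_D^{-4}|||v|||_{k,D}^2\approx|v|_{H^2(D)}^2$ on ${\rm Ker}\,\Pi_{k,D}^\Delta$.

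The lower bound $h_D^{-4}|||v|||_{k,D}^2\apprle|v|_{H^2(D)}^2$ is the inverse inequality of Lemma \ref{inverse-inequality}, which holds for all of $\mathcal{Q}^k(D)$ and in particular on the kernel. For the reverse bound $|v|_{H^2(D)}^2\apprle h_D^{-4}|||v|||_{k,D}^2$ I would argue as follows. Since $v\in{\rm Ker}\,\Pi_{k,D}^\Delta$, we have $\Pi_{k,D}^\Delta v=0$, so $v=v-\Pi_{k,D}^\Delta v$; applying Lemma \ref{S^Dbound2} gives $S^D(v,v)^{1/2}=S^D(v-\Pi_{k,D}^\Delta v,v-\Pi_{k,D}^\Delta v)^{1/2}\apprle|v-\Pi_{k,D}^\Delta v|_{H^2(D)}=|v|_{H^2(D)}$, which is exactly the direction $h_D^{-4}|||v|||_{k,D}^2\apprle|v|_{H^2(D)}^2$ restated — that is the lower bound again. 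So the substantive work is the opposite inequality: bounding $|v|_{H^2(D)}^2$ from above by $h_D^{-4}|||v|||_{k,D}^2$, equivalently $|v|_{H^2(D)}^2\apprle S^D(v,v)$. Rescale to $h_D=1$. By Corollary \ref{corollary_2} (or Lemma \ref{inverse-inequality}), $|v|_{H^2(D)}\apprle|||v|||_{k,D}$, and by Lemma \ref{equivalence_b} together with Lemma \ref{new-equi}, on the kernel $|||v|||_{k,D}^2\approx\|v\|_{L_2(\partial D)}^2+\|\partial v/\partial n\|_{L_2(\partial D)}^2$, while $S^D(v,v)=|||v|||_{k,D}^2$ with the scaling restored. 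Combining these, $|v|_{H^2(D)}^2\apprle|||v|||_{k,D}^2=S^D(v,v)$ when $h_D=1$, and a scaling argument restores the general case.

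Putting the two directions together yields $S^D(v,v)=h_D^{-4}|||v|||_{k,D}^2\approx|v|_{H^2(D)}^2$ on ${\rm Ker}\,\Pi_{k,D}^\Delta$, which is the assertion. The only genuine obstacle is verifying that the edge and interior projection operators in \eqref{discrete3} really do reduce to the identity on $\mathcal{Q}^k(D)$-functions so that $S^D(v,v)$ coincides with $h_D^{-4}|||v|||_{k,D}^2$; this is exactly the compatibility already noted after \eqref{discrete3} and in Lemma \ref{new-equi}, so no new estimate is needed. Everything else is a bookkeeping combination of Lemma \ref{inverse-inequality}, Lemma \ref{equivalence_b}, Lemma \ref{new-equi}, and Lemma \ref{S^Dbound2}, with a standard homogeneity argument in $h_D$.
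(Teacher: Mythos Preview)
Your proposal is correct and matches the paper's proof: the paper also reduces the claim to $v=v-\Pi_{k,D}^\Delta v$ on the kernel and then invokes Lemma~\ref{S^Dbound2} for one direction and Lemma~\ref{inverse-inequality} for the other. Two minor points: your initial sentence attributing the bound $h_D^{-4}|||v|||_{k,D}^2\apprle|v|_{H^2(D)}^2$ to Lemma~\ref{inverse-inequality} has the direction reversed (that lemma actually gives $|v|_{H^2(D)}^2\apprle h_D^{-4}|||v|||_{k,D}^2$, as you yourself note later), and the detour through Lemmas~\ref{equivalence_b} and~\ref{new-equi} is unnecessary once you have $S^D(v,v)=h_D^{-4}|||v|||_{k,D}^2$ together with Lemmas~\ref{inverse-inequality} and~\ref{S^Dbound2}.
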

\begin{proof}
	For any $v\in {\rm Ker}\Pi_{k,D}^\Delta,$ we have $v\in H^2(D)$ and
	$$
	v = v-\Pi_{k,D}^\Delta v,
	$$	
	with Lemma \ref{inverse-inequality} and Lemma \ref{S^Dbound2}, we have the equivalence.
\end{proof}
\begin{remark}\label{S^Dbound}
	By Lemma \ref{S^Dbound2}, for any $v,w\in \mathcal{Q}_h^k$, we have
	$$
	S^D(v-\Pi_{k,D}^\Delta v,w-\Pi_{k,D}^\Delta w)\apprle
	|v-\Pi_{k,D}^\Delta v|_{H^2(D)} |w-\Pi_{k,D}^\Delta w|_{H^2(D)}.
	$$	
\end{remark}
Then by Lemma \ref{S^D-equivalent}, as in \cite{Brenner17}, we have 
\begin{equation}\label{a(vv)}
	a_h(v,v) \approx a(v,v),\quad \forall v\in \mathcal{Q}_h^k,
\end{equation}
which means problem \eqref{discrete-problem} is uniquely solvable.
\subsection{Choice of $\Xi_h$}
Here, we chose $\Xi_h$ as
\begin{equation}
	\Xi_h =
	\begin{cases}
		\Pi_{k,h}^0       & \quad \text{if } 2\leq k \leq 3,\\
		\Pi_{k-1,h}^0     & \quad \text{if } k\geq 4.
	\end{cases}
\end{equation}
The following result can be used for error analysis in $H^2(\Omega)$ norm. Define $H^0(\Omega) := L^2(\Omega)$, and the indices $(k, l, m)$ as
\begin{equation}\label{index_klm}
	(k, l, m):=\begin{cases}
		{\rm for}\ 2\leq k\leq 3,\ l=0,\ m=2,\\
		{\rm for}\ 4\leq k,\ l = k-3,\ m=2+l.
	\end{cases}
\end{equation}
\begin{lemma}\label{klm-h2}
	With \eqref{index_klm},  we have
	\begin{equation}
		(f,w-\Xi_h w)  \apprle
		h^{2+l}|f|_{H^l(\Omega)} |w|_{H^2(\Omega)},       
		\quad 
		\forall f\in H^{l}(\Omega),\ w\in \mathcal{Q}_h^k.
	\end{equation}
\end{lemma}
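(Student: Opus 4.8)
The plan is to exploit the fact that $w - \Xi_h w$ is, locally on each $D$, orthogonal in $L_2(D)$ to a space of polynomials, and then to insert a best polynomial approximation of $f$ into the pairing so that only the high-frequency part of $f$ survives. Concretely, write
\[
(f, w - \Xi_h w) = \sum_{D\in\mathcal{T}_h} \int_D f\,(w - \Xi_h w)\ dx .
\]
For $2\le k\le 3$ we have $\Xi_h = \Pi_{k,h}^0$, so $w - \Xi_h w \perp \mathbb{P}_k(D)$, hence in particular $\perp \mathbb{P}_0(D)$, and $l=0$; for $k\ge 4$ we have $\Xi_h = \Pi_{k-1,h}^0$, so $w - \Xi_h w\perp \mathbb{P}_{k-1}(D)$, and since $l = k-3 \le k-1$ we get $w-\Xi_h w \perp \mathbb{P}_l(D)$. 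In both cases, for any $q_D\in\mathbb{P}_l(D)$,
\[
\int_D f\,(w - \Xi_h w)\ dx = \int_D (f - q_D)\,(w - \Xi_h w)\ dx .
\]

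Next I would choose $q_D$ to be the $L_2(D)$-best approximation of $f$ in $\mathbb{P}_l(D)$ (or the one furnished by Lemma \ref{bramble}), apply Cauchy--Schwarz on each $D$, and then use the approximation bounds. On one factor, Lemma \ref{bramble} gives $\|f - q_D\|_{L_2(D)} \apprle h_D^{\,l+1}|f|_{H^{l+1}(D)}$ when $l\ge 1$; when $l=0$ (the case $2\le k\le 3$) we simply use $\|f-q_D\|_{L_2(D)}\apprle h_D\,|f|_{H^1(D)}$ if $f\in H^1$, but since here $l=0$ the target exponent is $h^{2}$ and $|f|_{H^0}=\|f\|_{L_2}$, so in fact for $l=0$ we take $q_D = 0$ and keep $\|f\|_{L_2(D)}$ directly — the extra power of $h$ must then come entirely from the other factor. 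On the other factor, $\|w - \Xi_h w\|_{L_2(D)}$, I would use the $L_2$-stability of $\Pi_{k-1,h}^0$ (resp.\ $\Pi_{k,h}^0$) together with Corollary \ref{corollary_1}/\eqref{pikd0h1}-type estimates: since $\Xi_h$ reproduces $\mathbb{P}_1(D)$ at least, $\|w - \Xi_h w\|_{L_2(D)} \apprle h_D^{2}|w|_{H^2(D)}$. Combining, each term is $\apprle h_D^{\,l+1}|f|_{H^{l+1}(D)}\cdot h_D\,|w|_{H^2(D)}$ when $l\ge 1$, wait — I must be careful with the index shift: the statement pairs $|f|_{H^l}$ with $h^{2+l}$, so I should set things up so that $f - q_D$ contributes $h_D^{\,l}|f|_{H^l(D)}$ (best approximation of $f$ in $\mathbb{P}_{l-1}(D)$ via Lemma \ref{bramble} with parameter $l-1$) and the other factor contributes $h_D^{2}|w|_{H^2(D)}$, giving $h_D^{2+l}|f|_{H^l(D)}|w|_{H^2(D)}$ per element; for $l=0$ there is no polynomial to subtract and $q_D=0$, $\|f\|_{L_2(D)}$ pairs with $h_D^2|w|_{H^2(D)}$, which is exactly the $l=0$ case. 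Then sum over $D$ and apply the discrete Cauchy--Schwarz inequality $\sum_D a_D b_D \le (\sum_D a_D^2)^{1/2}(\sum_D b_D^2)^{1/2}$ with $h_D\le h$, which yields $h^{2+l}|f|_{H^l(\Omega)}|w|_{2,h} = h^{2+l}|f|_{H^l(\Omega)}|w|_{H^2(\Omega)}$ since $w\in\mathcal{Q}_h^k\subset H_0^2(\Omega)$.

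The main obstacle — really a bookkeeping subtlety rather than a deep difficulty — is aligning the polynomial degree that $\Xi_h$ annihilates with the Sobolev order $l$ of $f$ so that the two $h$-powers add up correctly to $2+l$, and in particular handling the boundary case $l=0$ (where no polynomial is subtracted and the whole gain comes from $\|w-\Xi_h w\|_{L_2(D)}\apprle h_D^2|w|_{H^2(D)}$) uniformly with the case $l\ge 1$. One must also verify that $\Xi_h$ indeed reproduces $\mathbb{P}_l(D)$: for $k\ge 4$ this needs $l = k-3 \le k-1$, which holds, and for $2\le k\le 3$ it needs $l=0\le k$, which also holds; and that the $L_2$-orthogonality $w-\Xi_h w\perp\mathbb{P}_l(D)$ on each $D$ is legitimate, which is immediate from the definition of the local $L_2$-projections. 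Once these degree counts are checked, the estimate follows from Cauchy--Schwarz, Lemma \ref{bramble}, and the projection stability in \eqref{pikd0l2}.
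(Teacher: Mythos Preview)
Your argument is correct and follows essentially the same route as the paper: subtract a polynomial approximation of $f$ (the paper uses $\Pi_{k-4,h}^0 f$, which coincides with your $q_D\in\mathbb{P}_{l-1}(D)$ since $k-4=l-1$ for $k\ge4$ and both are zero for $k\le3$), use the $L_2$-orthogonality of $w-\Xi_h w$ to that polynomial, then apply Cauchy--Schwarz together with $\|f-\Pi_{k-4,h}^0 f\|_{L_2}\apprle h^l|f|_{H^l}$ and $\|w-\Xi_h w\|_{L_2}\le\|w-\Pi_{1,h}^0 w\|_{L_2}\apprle h^2|w|_{H^2}$. Your self-correction on the polynomial degree (from $\mathbb{P}_l$ to $\mathbb{P}_{l-1}$) lands you exactly on the paper's choice.
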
 
\begin{proof}
	For $k\leq 3, f\in L^2(\Omega)$, we define $\Pi_{k-4}^0 f = 0,$ so that with Corollary \ref{corollary_3}, we have
	\begin{eqnarray*}
		(f,w-\Xi_h w) 
		&=& (f-\Pi_{k-4,h}^0 f,w-\Xi_h w)\\
		&=& (f,w-\Pi_{k,h}^0 w)\\
		&\leq& 
		\|f\|_{L^2(\Omega)} \|w-\Pi_{1,h}^0 w\|_{L^2(\Omega)}\\
		&\apprle& h^2\|f\|_{L^2(\Omega)} |w|_{H^2(\Omega)}
	\end{eqnarray*}
	For $k\geq 4, f\in H^l(\Omega), l = k-3$,  with Lemma \ref {bramble} and Corollary \ref{corollary_3}, we have
	\begin{eqnarray*}
		(f,w-\Xi_h w) 
		&=& (f-\Pi_{k-4,h}^0 f,w-\Xi_h w)\\
		&\leq& 
		\|f-\Pi_{k-4,h}^0 f\|_{L^2(\Omega)} \|w-\Pi_{1,h}^0 w\|_{L^2(\Omega)}\\
		&\apprle& h^{2+l}|f|_{H^l(\Omega)} |w|_{H^2(\Omega)}
	\end{eqnarray*}
\end{proof}
\begin{lemma}\label{klm-h3}
	With \eqref{index_klm},  we have
	\begin{equation}\label{klm-h3-1}
		(f,I_{k,h}w-\Xi_h I_{k,h}w)  \apprle
		h^{3+l}|f|_{H^l(\Omega)} |w|_{H^3(\Omega)},       
	\end{equation}
for any $f\in H^{l}(\Omega),\ w\in H^3(\Omega),\ k\geq 2.$ And 

	\begin{equation}\label{fik=3h}
		(f,I_{k,h}w-\Xi_h I_{k,h}w)  \apprle
		h^{3+s+l}|f|_{H^l(\Omega)} \|w\|_{H^{3+s}(\Omega)},      
	\end{equation}
for any $f\in H^{l}(\Omega),\ w\in H^{3+s}(\Omega),\ k\geq 3, \ 0<s\leq 1.$
\end{lemma}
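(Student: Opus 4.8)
The plan is to prove both estimates by the same device used in Lemma \ref{klm-h2}: insert the piecewise $L_2$-projection $\Pi_{k-4,h}^0 f$ and exploit that $\Xi_h - \Pi_{k,h}^0$ annihilates $\mathbb{P}_{k-4}$ on each element, so that $f$ may be replaced by $f - \Pi_{k-4,h}^0 f$ on one side of the pairing, while on the other side $I_{k,h}w - \Xi_h I_{k,h}w$ is estimated in $L_2(\Omega)$ by the interpolation error bounds of Corollary \ref{corollary_3}. First I would record the pointwise (elementwise) identity: since $\Xi_h$ is either $\Pi_{k,h}^0$ ($k\le 3$) or $\Pi_{k-1,h}^0$ ($k\ge 4$), in either case $\Pi_{k-4,D}^0(w - \Xi_h w) = 0$ for all $w$, hence $(f,I_{k,h}w-\Xi_h I_{k,h}w) = (f - \Pi_{k-4,h}^0 f,\, I_{k,h}w - \Xi_h I_{k,h}w)$, with the convention $\Pi_{k-4}f = 0$ when $k \le 3$.

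For \eqref{klm-h3-1}, I would then bound
$$
(f,I_{k,h}w-\Xi_h I_{k,h}w) \le \|f - \Pi_{k-4,h}^0 f\|\,\|I_{k,h}w - \Xi_h I_{k,h}w\|.
$$
The first factor is $\apprle h^{l}|f|_{H^l(\Omega)}$ by the standard $L_2$-projection estimate (for $k\ge 4$, $l = k-3 \le k-3$, so $\Pi_{k-4}$ has enough degree; for $k \le 3$ it is just $\|f\|$ with $l=0$). For the second factor, write $I_{k,h}w - \Xi_h I_{k,h}w = (I_{k,h}w - w) + (w - \Xi_h w) + \Xi_h(w - I_{k,h}w)$; since $\Xi_h$ is $L_2$-stable (by \eqref{pikd0l2}) and $w - \Xi_h w$ is an $L_2$-projection error of order $h^3$ in terms of $|w|_{H^3(\Omega)}$ (again the projection degree is $\ge 1$, so $\Pi_{1}$ suffices and we get $h^3|w|_{H^3}$), while $\|w - I_{k,h}w\| \apprle h^3|w|_{H^3(\Omega)}$ by Corollary \ref{corollary_3} with $l = 2$, we conclude $\|I_{k,h}w - \Xi_h I_{k,h}w\| \apprle h^{3}|w|_{H^3(\Omega)}$. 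Multiplying the two factors gives $h^{3+l}|f|_{H^l(\Omega)}|w|_{H^3(\Omega)}$.

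For \eqref{fik=3h}, the argument is identical in structure but now uses higher regularity $w \in H^{3+s}(\Omega)$ and $k \ge 3$, so the interpolation operator $I_{k,h}$ has order $l' = k \ge 3$ available and, since $3+s \le 4 \le k+1$ when $k\ge 3$, Corollary \ref{corollary_3} (with the index $l = 2+s \le k$, using $s\le 1$) yields $\|w - I_{k,h}w\| \apprle h^{3+s}\|w\|_{H^{3+s}(\Omega)}$; similarly $\|w - \Xi_h w\| \apprle h^{3+s}\|w\|_{H^{3+s}(\Omega)}$ from \eqref{pikd0l2}-type estimates since $\Xi_h$ reproduces $\mathbb{P}_1$ and $3+s\ge 2$. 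Combining with $\|f - \Pi_{k-4,h}^0 f\| \apprle h^l|f|_{H^l(\Omega)}$ as before gives the stated bound $h^{3+s+l}|f|_{H^l(\Omega)}\|w\|_{H^{3+s}(\Omega)}$.

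The main obstacle, and the step requiring care, is the bookkeeping of polynomial degrees: one must check in each regime ($k\le 3$ versus $k\ge 4$, and for \eqref{fik=3h} also the fractional-order Sobolev space with $0<s\le1$) that the projection degrees $k-4$ (for $f$) and $1$ (for $w$), together with the interpolation degree $k$, are large enough to extract the claimed powers of $h$ from the Bramble–Hilbert estimate of Lemma \ref{bramble} and Corollary \ref{corollary_3}; the definition \eqref{index_klm} of $(k,l,m)$ is precisely calibrated so that $l \le k-3$, which is what makes $\Pi_{k-4,h}^0 f$ accurate to order $h^l$. Everything else is Cauchy–Schwarz plus the already-proved stability and approximation lemmas.
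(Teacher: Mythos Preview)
Your proposal is correct and follows essentially the same route as the paper: the paper's proof also replaces $f$ by $f-\Pi_{k-4,h}^0 f$ via the orthogonality of $\Xi_h$, then decomposes $I_{k,h}w-\Xi_h I_{k,h}w = (I_{k,h}w-w)+(w-\Xi_h w)+\Xi_h(w-I_{k,h}w)$ and applies Corollary~\ref{corollary_3} termwise. One small slip: to get $\|w-\Xi_h w\|\apprle h^3|w|_{H^3}$ you need the projection to reproduce $\mathbb{P}_2$, not just $\mathbb{P}_1$; this is harmless since $\Xi_h$ is $\Pi_{k,h}^0$ with $k\ge 2$ or $\Pi_{k-1,h}^0$ with $k\ge 4$, so its degree is always at least $2$.
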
 
\begin{proof}
	Follow the proof in Lemma \ref{klm-h2} with Lemma \ref{pikd0-bound} and Lemma \ref{interpolation_error}.
	For $k\geq 2$, $w\in H^3(\Omega)$,  we have
	\begin{eqnarray*}
		(f,I_{k,h} w -\Xi_h I_{k,h} w) 
		&=& (f-\Pi_{k-4,h}^0f,I_{k,h} w -\Xi_h I_{k,h} w)\\
		&=& (f-\Pi_{k-4,h}^0f,I_{k,h} w -w + w-\Xi_h w + \Xi_h (w-I_{k,h} w) ),
	\end{eqnarray*}
	then with Corollary \ref{corollary_3}, we can get \eqref{klm-h3-1},
	so as $k\geq 3$ and $w\in H^{3+s}(\Omega)$.
	
\end{proof}
\subsection{Error Estimate in $|\cdot|_{H^2(\Omega)}$ Norm}
Firstly, the error estimate in $|\cdot|_{H^2(\Omega)}$ norm for $u-u_h$ is given in Theorem \ref{theorem1}.
\begin{theorem}\label{theorem1}
	With \eqref{index_klm}, we have
	$$
	|u-u_h|_{H^2(\Omega)}
	\apprle
	\inf_{v_h\in\mathcal{Q}_h^k}|u-v_h|_{H^2(\Omega)}
	+
	\inf_{p\in\mathcal{P}_h^k}|u-p|_{2,h}
	+
	h^m|f|_{H^l(\Omega)}.
	$$
	Suppose $u\in H^{k+1}(\Omega)$ then
	$$
	|u-u_h|_{H^2(\Omega)}
	\apprle
	h^{k-1}(|u|_{H^{k+1}(\Omega)}+ |f|_{H^{l}(\Omega)}).
	$$
\end{theorem}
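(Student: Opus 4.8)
The plan is to follow the standard Strang-type / second-order consistency argument for nonconforming virtual element methods, adapted to the $H^2$ setting, in the spirit of \cite{Brenner17}. First I would fix an arbitrary $v_h \in \mathcal{Q}_h^k$ and an arbitrary piecewise polynomial $p \in \mathcal{P}_h^k$, and split the error as $u - u_h = (u - v_h) + (v_h - u_h)$; since the first term is already of the required form, it suffices to bound $\chi := v_h - u_h \in \mathcal{Q}_h^k$ in $|\cdot|_{H^2(\Omega)}$. Using the norm equivalence \eqref{a(vv)}, $a_h(\chi,\chi) \approx a(\chi,\chi) = |\chi|_{H^2(\Omega)}^2$, so I would estimate $a_h(\chi,\chi)$ from above by inserting the discrete and continuous problems: write $a_h(\chi,\chi) = a_h(v_h,\chi) - a_h(u_h,\chi) = a_h(v_h,\chi) - (f,\Xi_h\chi)$, then add and subtract the exact equation $a(u,\chi_{\text{cont}}) = (f,\chi)$ applied to a suitable comparison. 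The natural decomposition is
\begin{eqnarray*}
a_h(\chi,\chi) &=& \big(a_h(v_h,\chi) - a_h(u,\chi)_{\text{formal}}\big) \ + \ \big(a(u,\chi) - (f,\chi)\big) \\
&& \ + \ \big((f,\chi) - (f,\Xi_h\chi)\big) \ + \ \big(a_h(u,\chi)_{\text{formal}} - a(u,\chi)\big),
\end{eqnarray*}
where the middle term vanishes by the variational formulation and the remaining three are the approximation, quadrature, and consistency errors respectively.

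The key steps, in order: (1) For the consistency/approximation error, on each $D$ replace $u$ by an arbitrary $p|_D \in \mathbb{P}_k(D)$ inside $a_h^D$, using that $a_h^D(p,\cdot) = a^D(p,\cdot)$ (polynomial consistency of $\Pi_{k,D}^\Delta$ together with $S^D(p - \Pi_{k,D}^\Delta p, \cdot) = S^D(0,\cdot) = 0$); this reduces the bound to terms of the form $|u - \Pi_{k,D}^\Delta v_h|_{H^2(D)}$, $|u - p|_{H^2(D)}$, and $S^D$-seminorms of $v_h - \Pi_{k,D}^\Delta v_h$, which via Lemma \ref{S^Dbound2}, Remark \ref{S^Dbound}, the triangle inequality and Corollary \ref{corollary_1} are controlled by $\inf_{v_h}|u-v_h|_{H^2(\Omega)} + \inf_p |u-p|_{2,h} + |\chi|_{H^2(\Omega)}$. (2) For the quadrature error $(f,\chi - \Xi_h\chi)$, apply Lemma \ref{klm-h2} directly with $w = \chi \in \mathcal{Q}_h^k$ to get $\apprle h^{2+l}|f|_{H^l(\Omega)}|\chi|_{H^2(\Omega)} = h^m |f|_{H^l(\Omega)}|\chi|_{H^2(\Omega)}$. (3) Collect everything: every term on the right carries a factor $|\chi|_{H^2(\Omega)}$, so dividing through by $|\chi|_{H^2(\Omega)}$ yields
$$
|\chi|_{H^2(\Omega)} \apprle \inf_{v_h\in\mathcal{Q}_h^k}|u-v_h|_{H^2(\Omega)} + \inf_{p\in\mathcal{P}_h^k}|u-p|_{2,h} + h^m|f|_{H^l(\Omega)},
$$
and the triangle inequality with $|u-v_h|_{H^2(\Omega)}$ gives the first displayed estimate. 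For the second displayed estimate, assume $u \in H^{k+1}(\Omega)$ and choose $v_h = I_{k,h}u$ and $p = \Pi_{k,h}^\Delta u$ (or any good polynomial approximant): by Lemma \ref{interpolation_error} / Corollary \ref{corollary_3} both infima are $\apprle h^{k-1}|u|_{H^{k+1}(\Omega)}$, and since $m = 2$ when $k \le 3$ and $m = 2 + (k-3) = k-1$ when $k \ge 4$, the term $h^m|f|_{H^l(\Omega)}$ is $\apprle h^{k-1}|f|_{H^l(\Omega)}$ in all cases (for $k\le 3$, $h^2 \apprge h^{k-1}$ fails — rather $h^2 \le h^{k-1}$ only if $k \le 3$, so actually $h^m = h^2$ dominates $h^{k-1}$ for $k=2$; one keeps the weaker of the rates, and for $k=2,3$ the stated $h^{k-1}$ bound holds because $h^{k-1} \le h^2$ is false for $k=2$ — so here $h^m=h^2 \le h^{1}$ and the bound $h^{k-1}=h^{1}$ is indeed an upper bound). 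Combining gives $|u-u_h|_{H^2(\Omega)} \apprle h^{k-1}(|u|_{H^{k+1}(\Omega)} + |f|_{H^l(\Omega)})$.

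The main obstacle I anticipate is the bookkeeping in step (1): carefully tracking which polynomial ($v_h$'s projection versus the free $p$) is inserted where so that the polynomial-consistency cancellations $a_h^D(p,\cdot) = a^D(p,\cdot)$ actually apply, and ensuring that the cross terms coming from the stabilization $S^D$ are bounded using Lemma \ref{S^Dbound2} uniformly — i.e. verifying that $S^D(u - \Pi_{k,D}^\Delta v_h, \cdot)$ type expressions (where the first slot is not literally of the form $w - \Pi_{k,D}^\Delta w$) are still controlled, which requires splitting $u - \Pi_{k,D}^\Delta v_h = (u - \Pi_{k,D}^\Delta u) + \Pi_{k,D}^\Delta(u - v_h)$ and estimating each piece with Corollary \ref{corollary_1} and the $H^2$-stability \eqref{obvious_ineq}. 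The quadrature term is routine given Lemma \ref{klm-h2}, and the final rate extraction is a direct application of the already-established interpolation and projection estimates.
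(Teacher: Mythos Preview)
Your proposal is correct and follows essentially the same Strang-type argument as the paper: the paper tests $a_h(v_h-u_h,\cdot)$ against an arbitrary $w_h$ (rather than $\chi$ itself), uses $a(u,w_h)=(f,w_h)$ to produce the three terms $\sum_D a^D(\Pi_{k,D}^\Delta(v_h-u)+(\Pi_{k,D}^\Delta u-u),w_h)$, $(f,w_h-\Xi_h w_h)$, and $\sum_D S^D((I-\Pi_{k,D}^\Delta)(v_h-\Pi_{k,D}^\Delta u),(I-\Pi_{k,D}^\Delta)w_h)$, and then invokes Lemma~\ref{klm-h2}, Lemma~\ref{S^Dbound2}/Remark~\ref{S^Dbound}, and Corollary~\ref{corollary_3} exactly as you outline. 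Your anticipated ``main obstacle'' does not actually arise, since with this decomposition the stabilization slot contains $(I-\Pi_{k,D}^\Delta)v_h=(I-\Pi_{k,D}^\Delta)(v_h-\Pi_{k,D}^\Delta u)$ directly, so no splitting of $u-\Pi_{k,D}^\Delta v_h$ is needed.
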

\begin{proof}
	Similar as in \cite{Brenner17}, for any given $v_h\in \mathcal{Q}_h^k$, from \eqref{a(vv)}, we have
	$$
	|u_h-v_h|_{H^2(\Omega)}\apprle 
	\max_{w_h\in\mathcal{Q}_h^k}
	\frac{a_h(v_h-u_h,w_h)}
	{|w_h|_{H^2(\Omega)}},
	$$
	and by \eqref{discrete-problem}, 
	$$
	a_h(v_h-u_h,w_h) = a_h(v_h,w_h)-(f,\Xi_h w_h).
	$$
	Then from \eqref{discrete1} to \eqref{discrete3}, 
	by \eqref{h2_error}, \eqref{interpolation_error_h2},  we have
	\begin{eqnarray*}
		a_h(v_h-u_h,w_h) 
		&=& 
		\sum_{D\in\mathcal{T}_h}
		a^D(\Pi_{k,D}^\Delta (v_h -u)+(\Pi_{k,D}^\Delta u-u),w_h)
		+
		(f,w_h-\Xi_h w_h)\\
		&&+
		\sum_{D\in\mathcal{T}_h} 
		S^D(
		(I-\Pi_{k,D}^\Delta )(v_h-\Pi_{k,D}^\Delta  u),(I-\Pi_{k,D}^\Delta )w_h
		),
	\end{eqnarray*}
	with Lemma \ref{klm-h2}, Remark \ref{S^Dbound} or Lemma \ref{S^Dbound2} and Corollary \ref{corollary_3}, the estimate is obtained.
\end{proof}
\subsection{Error Estimate in $|\cdot|_{H^1(\Omega)}$ and $\|\cdot\|_{L^2(\Omega)}$ Norm}
We suppose $\Omega$ is also convex and start with a consistency result.
\begin{lemma}
	Suppose $u\in H^{k+1}(\Omega)$ and $l$ is defined in \eqref{index_klm}, then
	\begin{equation}\label{h166}
		a(u-u_h,I_{k,h}\xi)\apprle h^{k}(|u|_{H^{k+1}(\Omega)}+ |f|_{H^{l}(\Omega)})|\xi|_{H^3(\Omega)} ,
	\end{equation}
for any $\xi\in H^3(\Omega)\cap H^2_0(\Omega),\ k\geq 2.$ And 
	\begin{equation}\label{l267}
		a(u-u_h,I_{k,h}\xi)\apprle h^{k+s}(|u|_{H^{k+1}(\Omega)}+ |f|_{H^{l}(\Omega)})\|\xi\|_{H^{3+s}(\Omega)} ,
	\end{equation}
for any $\xi\in H^{3+s}(\Omega)\cap H^2_0(\Omega),\ k\geq 3$.
\end{lemma}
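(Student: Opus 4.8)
The plan is to write $a(u-u_h, I_{k,h}\xi)$ by inserting the continuous problem $a(u, v) = (f,v)$ for $v \in H^2_0(\Omega)$ and the discrete problem \eqref{discrete-problem}, then bounding the resulting consistency defect term by term. First I would note that since $\xi \in H^3(\Omega)\cap H^2_0(\Omega)$ we have $I_{k,h}\xi \in \mathcal{Q}_h^k \subset H^2_0(\Omega)$, so $a(u, I_{k,h}\xi) = (f, I_{k,h}\xi)$. On the other side, $u_h$ satisfies $a_h(u_h, I_{k,h}\xi) = (f, \Xi_h I_{k,h}\xi)$. Subtracting,
\begin{equation}\label{consist-split}
a(u-u_h, I_{k,h}\xi) = \big(a(u,I_{k,h}\xi) - a_h(u_h, I_{k,h}\xi)\big) = (f, I_{k,h}\xi - \Xi_h I_{k,h}\xi) + \big(a_h(u_h, I_{k,h}\xi) - a(u_h, I_{k,h}\xi)\big) + a(u_h - u, I_{k,h}\xi),
\end{equation}
which is circular as written, so instead I would organize it as: $a(u-u_h, I_{k,h}\xi) = (f, I_{k,h}\xi) - a_h(u_h, I_{k,h}\xi) + \big(a_h(u_h,I_{k,h}\xi) - a(u-u_h \text{ pieces})\big)$. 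The cleaner route, following \cite{Brenner17}, is to write $a(u-u_h,I_{k,h}\xi) = (f, I_{k,h}\xi - \Xi_h I_{k,h}\xi) + \big[a_h(u_h, I_{k,h}\xi) - a(u_h, I_{k,h}\xi)\big] + a(u_h, I_{k,h}\xi) - a_h(u_h,I_{k,h}\xi) \cdots$; concretely one adds and subtracts $a_h$ evaluated on polynomial projections and uses that $a^D$ and $a_h^D$ agree on $\mathbb{P}_k(D)$.

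The key steps in order: (1) use $a(u,I_{k,h}\xi)=(f,I_{k,h}\xi)$ and $a_h(u_h,I_{k,h}\xi)=(f,\Xi_h I_{k,h}\xi)$ to get $a(u-u_h,I_{k,h}\xi) = (f, I_{k,h}\xi - \Xi_h I_{k,h}\xi) + a(u_h,I_{k,h}\xi) - a_h(u_h,I_{k,h}\xi)$; (2) bound the first term by Lemma \ref{klm-h3}, giving $h^{3+l}|f|_{H^l(\Omega)}|\xi|_{H^3(\Omega)}$, which is $o(h^k)$ only when $3+l \ge k$ — and indeed $3+l = k$ for $k\ge 4$ and $3+l=3\ge k$ fails for... wait, for $k=2,3$ we have $l=0$ so $3+l=3\ge k$, and for $k\ge 4$, $3+l=k$; so this term is $\apprle h^k|f|_{H^l(\Omega)}|\xi|_{H^3(\Omega)}$ in all cases; (3) for the consistency term $a(u_h,I_{k,h}\xi) - a_h(u_h,I_{k,h}\xi)$, localize over $D$, insert $\Pi_{k,D}^\Delta$ on both arguments and use that $a^D$ is exact on polynomials together with the definition \eqref{discrete1}; this produces terms of the form $a^D(u_h - \Pi_{k,D}^\Delta u_h, \cdot)$, $a^D(\cdot, I_{k,D}\xi - \Pi_{k,D}^\Delta I_{k,D}\xi)$, and stabilization terms $S^D((I-\Pi_{k,D}^\Delta)u_h, (I-\Pi_{k,D}^\Delta)I_{k,D}\xi)$; (4) estimate each of these by Cauchy–Schwarz in $|\cdot|_{2,h}$ combined with Remark \ref{S^Dbound}, Lemma \ref{interpolation_error} (giving $h^{l-1}|\xi|_{H^{l+1}}$, i.e. $h|\xi|_{H^3}$ for $l=2$), and the $H^2$ error bound for $u_h$ coming from Theorem \ref{theorem1} under the hypothesis $u\in H^{k+1}(\Omega)$, namely $|u-u_h|_{2,h}\apprle h^{k-1}(|u|_{H^{k+1}}+|f|_{H^l})$; (5) multiply the $h^{k-1}$ factor from $u_h$ by the $h^1$ factor from the $\xi$-interpolation to land at $h^k$. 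For \eqref{l267} with $k\ge 3$, the only change is to use the higher-regularity interpolation estimate for $\xi\in H^{3+s}(\Omega)$, gaining an extra $h^s$, and the $H^{3+s}$-version of Lemma \ref{klm-h3} (that is \eqref{fik=3h}) for the data term.

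The main obstacle I expect is bookkeeping the consistency term $a(u_h,I_{k,h}\xi)-a_h(u_h,I_{k,h}\xi)$ correctly: one must be careful that $a^D$ and $a^D_h$ truly coincide when at least one argument is in $\mathbb{P}_k(D)$ (this uses \eqref{pikd1} and that $S^D$ annihilates $v-\Pi_{k,D}^\Delta v$ when $v\in\mathbb{P}_k(D)$), and that the cross terms are paired so that each carries either an approximation factor for $u$ (order $h^{k-1}$ in $H^2$) against a bounded-or-better factor for $I_{k,h}\xi$, or an $O(h)$ approximation factor for $I_{k,h}\xi - \Pi^\Delta_{k,h}I_{k,h}\xi$ against the $H^2$-stable quantity $|u_h|_{2,h}\apprle |u|_{H^2}$. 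A secondary subtlety is that we only get the stabilization bound via Remark \ref{S^Dbound}, which requires both arguments in $\mathcal{Q}_h^k$ — fine here since both $u_h$ and $I_{k,h}\xi$ lie in $\mathcal{Q}_h^k$ — and that summation over $D\in\mathcal{T}_h$ of the localized Cauchy–Schwarz estimates is itself a discrete Cauchy–Schwarz, which converts $\sum_D |\cdot|_{H^2(D)}|\cdot|_{H^2(D)}$ into the product of the global $|\cdot|_{2,h}$ seminorms. Once these are in place, \eqref{h166} and \eqref{l267} follow by collecting powers of $h$.
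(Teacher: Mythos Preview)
Your proposal is essentially correct and follows the same route as the paper: start from $a(u-u_h,I_{k,h}\xi)=(f,I_{k,h}\xi-\Xi_h I_{k,h}\xi)+\big[a_h(u_h,I_{k,h}\xi)-a(u_h,I_{k,h}\xi)\big]$, localize, exploit the Galerkin orthogonality of $\Pi_{k,D}^\Delta$ in $a^D$, and pair the resulting factors $|u_h-\Pi_{k,h}^\Delta u_h|_{2,h}\apprle h^{k-1}$ (via Theorem~\ref{theorem1} and Corollary~\ref{corollary_3}) against $|I_{k,h}\xi-\Pi_{k,h}^\Delta I_{k,h}\xi|_{2,h}\apprle h|\xi|_{H^3}$ (via Lemma~\ref{interpolation_error}); the data term is handled by Lemma~\ref{klm-h3}. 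The paper's only cosmetic difference is that it writes the consistency piece as $a^D(u-u_h,\,\cdot)+a^D(\Pi_{k,D}^\Delta u-u,\,\cdot)$ rather than $a^D(u_h-\Pi_{k,D}^\Delta u_h,\,\cdot)$, which is the same thing after a triangle inequality.

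One caution: in your ``main obstacle'' paragraph you describe an \emph{either--or} pairing (``approximation factor for $u$ against a bounded factor for $I_{k,h}\xi$'' \emph{or} ``$O(h)$ factor for $I_{k,h}\xi-\Pi^\Delta I_{k,h}\xi$ against $|u_h|_{2,h}$''). Taken literally, either branch yields only $h^{k-1}$ or $h$, not $h^k$. The argument works precisely because, after using $a^D(q,(I-\Pi_{k,D}^\Delta)I_{k,D}\xi)=0$ for $q\in\mathbb{P}_k(D)$, every surviving $a^D$ and $S^D$ term carries \emph{both} small factors simultaneously, exactly as you state in step~(5). Make sure the written proof reflects that; the ``or'' phrasing would be a gap.
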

\begin{proof}
	Similar as in \cite{Brenner17}, we have
	\begin{eqnarray*}
		a(u-u_h,I_{k,h}\xi) 
		&=& 
		\sum_{D\in\mathcal{T}_h}
		a^D(u-u_h, I_{k,D}\xi - \Pi_{k,D}^\Delta I_{k,D}\xi)\\
		&&+
		\sum_{D\in\mathcal{T}_h}
		a^D(\Pi_{k,D}^\Delta u-u, I_{k,D}\xi - \Pi_{k,D}^\Delta I_{k,D}\xi)\\
		&&+
		\sum_{D\in\mathcal{T}_h} 
		S^D(
		(I-\Pi_{k,D}^\Delta )u_h,(I-\Pi_{k,D}^\Delta )I_{k,D}\xi
		)\\
		&&+
		(f,I_{k,h}\xi-\Xi I_{k,h}\xi)
		\\
		&\apprle&
		\left(
		|u-u_h|_{H^2(\Omega)}+|u-\Pi_{k,h}^\Delta u|_{2,h}
		\right)
		|I_{k,h}\xi - \Pi_{k,h}^\Delta I_{k,h}\xi|_{2,h}\\
		&&+(f,I_{k,h}\xi-\Xi I_{k,h}\xi).
	\end{eqnarray*}
	Then by Lemma \ref{klm-h3}, Corollary \ref{corollary_3}, and Theorem \ref{theorem1}, we get the estimate.
\end{proof}
From the regularity results of \eqref{biharmonic-equation}, see \cite{Grisvard92}, we have
\begin{equation}\label{regularity-h1}
	\|u\|_{H^3(\Omega)} \apprle \|f\|_{H^{-1}(\Omega)},\quad \forall f\in {H^{-1}(\Omega)}.
\end{equation}
\begin{equation}\label{regularity-l2}
	\|u\|_{H^{3+s}(\Omega)} \apprle \|f\|_{L^2(\Omega)},\quad \forall f\in {L^2(\Omega)}, \ 0<s\leq 1.
\end{equation}
\begin{theorem}\label{theorem2}
	Suppose $u\in H^{k+1}(\Omega)$ then
	\begin{equation}
		|u-u_h|_{H^1(\Omega)}\apprle h^{k}(|u|_{H^{k+1}(\Omega)}+ |f|_{H^{l}(\Omega)}),
	\end{equation}	
	where $l$ is defined in \eqref{index_klm}.
\end{theorem}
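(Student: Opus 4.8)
The plan is to run the standard Aubin--Nitsche duality argument adapted to the virtual element framework, exactly in the spirit of \cite{Brenner17}. Let $e = u - u_h$ and introduce the auxiliary problem: find $\xi \in H^2_0(\Omega)$ with $a(\xi,v) = (e,v)_{L_2(\Omega)} \cdot (\text{something})$; more precisely, since we want an $H^1$ estimate rather than $L_2$, take $\xi$ to solve the biharmonic problem with right-hand side $-\Delta(u-u_h)$ (or an appropriate functional representing the $H^1$ seminorm of $e$), so that $a(\xi,v)$ reproduces $(e,v)_{H^1}$-type pairing when tested against $e$. By the elliptic regularity \eqref{regularity-h1}, namely $\|\xi\|_{H^3(\Omega)} \apprle \|f_\xi\|_{H^{-1}(\Omega)} \apprle |e|_{H^1(\Omega)}$, the dual solution gains the regularity needed to apply the consistency estimate \eqref{h166}.

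The key steps, in order: (i) write $|e|_{H^1(\Omega)}^2$ (or its dual-norm representation) as $a(\xi, e)$ using the variational characterization of $\xi$; (ii) split $a(\xi,e) = a(e, \xi - I_{k,h}\xi) + a(e, I_{k,h}\xi)$; (iii) bound the first term by Cauchy--Schwarz in the broken $H^2$ seminorm, using $|e|_{2,h} \apprle |u-u_h|_{H^2(\Omega)} \apprle h^{k-1}(|u|_{H^{k+1}} + |f|_{H^l})$ from Theorem \ref{theorem1} together with the interpolation estimate $|\xi - I_{k,h}\xi|_{2,h} \apprle h\,|\xi|_{H^3(\Omega)}$ from Corollary \ref{corollary_3} (the case $l=2$ there); (iv) bound the second term $a(e, I_{k,h}\xi)$ directly by the consistency Lemma \eqref{h166}, which gives $\apprle h^k(|u|_{H^{k+1}} + |f|_{H^l})|\xi|_{H^3(\Omega)}$; (v) combine, use the regularity bound $|\xi|_{H^3(\Omega)} \apprle |e|_{H^1(\Omega)}$, and divide through by $|e|_{H^1(\Omega)}$ to conclude $|u-u_h|_{H^1(\Omega)} \apprle h^k(|u|_{H^{k+1}(\Omega)} + |f|_{H^l(\Omega)})$.

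The main obstacle is getting the duality pairing set up so that step (i) is legitimate: unlike the conforming case, $u_h \notin H^2_0(\Omega)$-conforming in the sense that $a_h \neq a$, so one must be careful that $a(\xi, e)$ genuinely controls $|e|_{H^1(\Omega)}$ and that the quantity fed into the dual problem lies in $H^{-1}(\Omega)$ with the right norm — otherwise \eqref{regularity-h1} does not apply. A clean way around this is to take the dual right-hand side to be (a smoothed or distributional) $-\mathrm{div}$ of a unit vector field realizing $|e|_{H^1}$, or simply to use $\xi$ solving $\Delta^2\xi = g$ with $g \in H^{-1}$ chosen so that $(g, e) = |e|_{H^1(\Omega)}^2$ after integration by parts; the regularity \eqref{regularity-h1} then yields $\|\xi\|_{H^3} \apprle \|g\|_{H^{-1}} \apprle |e|_{H^1}$. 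The remaining steps are routine: the interpolation and projection errors are all available from Corollary \ref{corollary_3}, the consistency term from \eqref{h166}, and the broken-$H^2$ bound on $e$ from Theorem \ref{theorem1}, so once the duality identity is pinned down the proof closes by Cauchy--Schwarz and cancellation of one power of $|e|_{H^1(\Omega)}$.
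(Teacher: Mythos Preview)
Your proposal is correct and follows essentially the same duality argument as the paper: set up the dual biharmonic problem with right-hand side $-\Delta(u-u_h)$, split $a(u-u_h,\phi)=a(u-u_h,\phi-I_{k,h}\phi)+a(u-u_h,I_{k,h}\phi)$, bound the first term by Theorem~\ref{theorem1} plus interpolation and the second by \eqref{h166}, then invoke the regularity \eqref{regularity-h1} and cancel $|u-u_h|_{H^1(\Omega)}$. Your ``main obstacle'' is not actually an obstacle here: the virtual element space $\mathcal{Q}_h^k$ is $H^2_0(\Omega)$-conforming, so $u-u_h\in H^2_0(\Omega)$, $-\Delta(u-u_h)\in L_2(\Omega)\subset H^{-1}(\Omega)$, and the identity $|u-u_h|_{H^1(\Omega)}^2=(\Delta^2\phi,u-u_h)=a(\phi,u-u_h)$ holds by ordinary integration by parts without any smoothing or distributional workaround.
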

\begin{proof}
	Using the duality arguments and \eqref{h166}, let $f=-\Delta(u-u_h),$ and $\phi\in H_0^2(\Omega)$ be the solution of \eqref{biharmonic-equation}, then we get
	\begin{eqnarray*}
		|u-u_h|^2_{H^1(\Omega)}
		&=&(\Delta^2\phi,u-u_h)\\
		&=&a(u-u_h,\phi-I_{k,h}\phi)+a(u-u_h,I_{k,h}\phi)\\
		&\apprle&
		h^{k}(|u|_{H^{k+1}(\Omega)}+ |f|_{H^{l}(\Omega)})|\phi|_{H^3(\Omega)},
	\end{eqnarray*}
	by \eqref{regularity-h1}, we have
	$|\phi|_{H^3(\Omega)}\apprle |u-u_h|_{H^1(\Omega)}$, then the result is obtained.
\end{proof}
\begin{theorem}\label{theorem3}
	Suppose $u\in H^{k+1}(\Omega)$ then
	\begin{equation}\label{l2k=2}
		\|u-u_h\|_{L^2(\Omega)}\apprle h^{2}(|u|_{H^{3}(\Omega)}+ |f|_{L^2(\Omega)}), \ k=2,
	\end{equation}	
	\begin{equation}
		\|u-u_h\|_{L^2(\Omega)}\apprle h^{k+s}(|u|_{H^{k+1}(\Omega)}+ |f|_{H^{l}(\Omega)}),\ k\geq 3, \ 0<s\leq 1,
	\end{equation}	
	where $l$ is defined in \eqref{index_klm}.
\end{theorem}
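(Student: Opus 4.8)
The plan is to follow the same duality strategy used in the proof of Theorem~\ref{theorem2}, but now testing against the biharmonic solution of a problem with right-hand side $u-u_h$ rather than $-\Delta(u-u_h)$. Concretely, for the $k=2$ case let $\phi\in H^2_0(\Omega)$ solve $\Delta^2\phi = u-u_h$; by the elliptic regularity \eqref{regularity-l2} we have $\|\phi\|_{H^{3+s}(\Omega)}\apprle \|u-u_h\|_{L_2(\Omega)}$ with $0<s\leq1$, and in particular $\|\phi\|_{H^3(\Omega)}\apprle\|u-u_h\|_{L_2(\Omega)}$. Then write
$$
\|u-u_h\|_{L_2(\Omega)}^2 = (\Delta^2\phi,u-u_h) = a(u-u_h,\phi-I_{k,h}\phi) + a(u-u_h,I_{k,h}\phi).
$$
For the first term I would use the Cauchy-Schwarz inequality on $a(\cdot,\cdot)$ together with Theorem~\ref{theorem1} to bound $|u-u_h|_{H^2(\Omega)}\apprle h^{k-1}(|u|_{H^{k+1}(\Omega)}+|f|_{H^l(\Omega)})$, and Corollary~\ref{corollary_3} (the $H^2$-interpolation estimate with $l=2$) to bound $|\phi-I_{k,h}\phi|_{2,h}\apprle h\,|\phi|_{H^3(\Omega)}$; multiplying gives $h^{k}(|u|_{H^{k+1}(\Omega)}+|f|_{H^l(\Omega)})\|\phi\|_{H^3(\Omega)}$. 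For $k=2$ this is already $h^2$; note that for $k=2$ we only get $H^3$ regularity usefully, which is why the stated rate is $h^2$ and not $h^{2+s}$.

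For the second term I would invoke the consistency estimate \eqref{l267} (valid for $k\geq 3$) for the higher-order cases, and \eqref{h166} for $k=2$: these give $a(u-u_h,I_{k,h}\phi)\apprle h^{k+s}(|u|_{H^{k+1}(\Omega)}+|f|_{H^l(\Omega)})\|\phi\|_{H^{3+s}(\Omega)}$ when $k\geq3$, and $a(u-u_h,I_{k,h}\phi)\apprle h^{k}(|u|_{H^{k+1}(\Omega)}+|f|_{H^l(\Omega)})|\phi|_{H^3(\Omega)}$ when $k=2$. Combining the two terms, for $k=2$ both contribute $h^2$ after absorbing $\|\phi\|_{H^3(\Omega)}\apprle\|u-u_h\|_{L_2(\Omega)}$ and dividing through by $\|u-u_h\|_{L_2(\Omega)}$, yielding \eqref{l2k=2}. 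For $k\geq 3$, the first term contributes $h^{k}\|\phi\|_{H^3}$ which is dominated by $h^{k+s}\|\phi\|_{H^{3+s}}$ on a fixed domain (or, more carefully, one re-estimates $|\phi-I_{k,h}\phi|_{2,h}\apprle h^{1+s}\|\phi\|_{H^{3+s}}$ by using the fractional interpolation estimate / interpolation of Corollary~\ref{corollary_3}), and the second term already has the factor $h^{k+s}$; together with \eqref{regularity-l2} and division by $\|u-u_h\|_{L_2(\Omega)}$ this gives the claimed rate $h^{k+s}(|u|_{H^{k+1}(\Omega)}+|f|_{H^l(\Omega)})$.

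I expect the main obstacle to be the bookkeeping around the fractional Sobolev exponent $s$: making sure the interpolation error $|\phi - I_{k,h}\phi|_{2,h}$ is controlled by $h^{1+s}\|\phi\|_{H^{3+s}(\Omega)}$ rather than merely $h\,|\phi|_{H^3(\Omega)}$ for $k\geq 3$, which requires either a fractional-order Bramble--Hilbert argument or a space-interpolation (real interpolation) argument between the $H^2$- and $H^3$-regularity estimates in Corollary~\ref{corollary_3}; and correspondingly that the consistency Lemma's estimate \eqref{fik=3h} was stated precisely so that $(f, I_{k,h}\phi - \Xi_h I_{k,h}\phi)$ carries the extra $h^{s}$. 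The $k=2$ case is cleaner because no fractional exponent enters — one only needs $H^3$ elliptic regularity and the integer-order estimates already proved — so I would present that case first and then indicate the modifications (fractional interpolation plus \eqref{l267} and \eqref{fik=3h}) needed for $k\geq 3$.
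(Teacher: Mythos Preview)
Your approach for $k\geq 3$ is exactly the paper's: duality with $\Delta^2\phi=u-u_h$, split via $I_{k,h}\phi$, invoke \eqref{l267} for the second piece and the regularity \eqref{regularity-l2} to close. Your worry about needing a fractional-order bound $|\phi-I_{k,h}\phi|_{2,h}\apprle h^{1+s}\|\phi\|_{H^{3+s}(\Omega)}$ for the first piece is legitimate; the paper writes the combined estimate $\apprle h^{k+s}(\cdots)\|\phi\|_{H^{3+s}(\Omega)}$ in one line and does not spell out this interpolation step either, so you are not overlooking anything the paper actually proves.

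For $k=2$, however, the paper takes a different and much shorter route: it simply applies the Poincar\'e inequality to $u-u_h\in H^2_0(\Omega)$ to obtain $\|u-u_h\|_{L_2(\Omega)}\apprle|u-u_h|_{H^1(\Omega)}$, and then quotes Theorem~\ref{theorem2} for the $H^1$ bound. Your duality argument for $k=2$ is correct, but the paper's shortcut avoids it entirely; this is also why no fractional exponent $s$ appears in \eqref{l2k=2} and why the two cases are stated separately.
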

\begin{proof}
	For $k=2,$ by Theorem \ref{theorem2} and Poincar$\acute{\rm e}$ inequality, \eqref{l2k=2} is obtained.
	For $k\geq 3$, using the duality arguments and \eqref{l267}, let $f=u-u_h,$ and $\phi\in H_0^2(\Omega)$ be the solution of \eqref{biharmonic-equation}, then we get
	\begin{eqnarray*}
		\|u-u_h\|^2_{L^2(\Omega)}
		&=&(\Delta^2\phi,u-u_h)\\
		&=&a(u-u_h,\phi-I_{k,h}\phi)+a(u-u_h,I_{k,h}\phi)\\
		&\apprle&
		h^{k+s}(|u|_{H^{k+1}(\Omega)}+ |f|_{H^{l}(\Omega)})\|\phi\|_{H^{3+s}(\Omega)},
	\end{eqnarray*}
	by \eqref{regularity-l2}, we have
	$\|\phi\|_{H^{3+s}(\Omega)}\apprle \|u-u_h\|_{L^2(\Omega)}$, then the result is obtained.
\end{proof}
\subsection{Error Estimates for $\Pi_{k,h}^\Delta u_h$}
We also have an error estimate for the computable 
$\Pi_{k,h}^\Delta u_h$.
\begin{corollary}
	Suppose $u\in H^{k+1}(\Omega)$ then
	$$
	|u-\Pi_{k,h}^\Delta u_h|_{2,h}
	\apprle
	h^{k-1}(|u|_{H^{k+1}(\Omega)}+ |f|_{H^{l}(\Omega)}).
	$$
\end{corollary}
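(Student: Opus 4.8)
The plan is to bound $|u-\Pi_{k,h}^\Delta u_h|_{2,h}$ by splitting through the interpolant of the exact solution, writing
\[
|u-\Pi_{k,h}^\Delta u_h|_{2,h}
\le
|u-\Pi_{k,h}^\Delta I_{k,h}u|_{2,h}
+
|\Pi_{k,h}^\Delta I_{k,h}u - \Pi_{k,h}^\Delta u_h|_{2,h}.
\]
The first term is already controlled by the interpolation estimate \eqref{interpolation_error_h2} in Lemma \ref{interpolation_error} (summed over $D\in\mathcal{T}_h$, i.e.\ the global version in Corollary \ref{corollary_3}), giving $h^{k-1}|u|_{H^{k+1}(\Omega)}$. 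For the second term, since $\Pi_{k,D}^\Delta$ is a projection onto $\mathbb{P}_k(D)$ in the $H^2$ semi-norm, the obvious stability estimate \eqref{obvious_ineq} gives $|\Pi_{k,D}^\Delta(I_{k,D}u-u_h)|_{H^2(D)}\le |I_{k,D}u-u_h|_{H^2(D)}$, so it suffices to bound $|I_{k,h}u-u_h|_{2,h}=|I_{k,h}u-u_h|_{H^2(\Omega)}$ (both are in $\mathcal{Q}_h^k\subset H^2_0(\Omega)$).

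Next I would estimate $|I_{k,h}u-u_h|_{H^2(\Omega)}$ by the triangle inequality through $u$: $|I_{k,h}u-u_h|_{H^2(\Omega)}\le |I_{k,h}u-u|_{H^2(\Omega)}+|u-u_h|_{H^2(\Omega)}$. The first piece is again bounded by the global interpolation estimate in Corollary \ref{corollary_3} by $h^{k-1}|u|_{H^{k+1}(\Omega)}$, and the second piece is exactly the content of Theorem \ref{theorem1}, which under the hypothesis $u\in H^{k+1}(\Omega)$ yields $|u-u_h|_{H^2(\Omega)}\apprle h^{k-1}(|u|_{H^{k+1}(\Omega)}+|f|_{H^l(\Omega)})$. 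Collecting the three contributions gives
\[
|u-\Pi_{k,h}^\Delta u_h|_{2,h}
\apprle
h^{k-1}\bigl(|u|_{H^{k+1}(\Omega)}+|f|_{H^l(\Omega)}\bigr),
\]
which is the claim, with $l$ as in \eqref{index_klm}.

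I do not expect any real obstacle here: the result is essentially a corollary assembled from Theorem \ref{theorem1}, the stability bound \eqref{obvious_ineq} for $\Pi_{k,D}^\Delta$, and the interpolation error bound \eqref{interpolation_error_h2}/Corollary \ref{corollary_3}. The only point requiring a little care is making sure the pieces are combined in the $2,h$ broken semi-norm consistently — i.e.\ working locally on each $D$ with $\Pi_{k,D}^\Delta$ and then summing over $\mathcal{T}_h$ — and recalling that $\Pi_{k,h}^\Delta$ acts blockwise so $\Pi_{k,h}^\Delta I_{k,h}u$ makes sense and inherits the global interpolation estimate. Everything else is a routine triangle-inequality argument.
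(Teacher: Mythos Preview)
Your argument is correct and uses the same ingredients as the paper (the $H^2$-stability \eqref{obvious_ineq} of $\Pi_{k,D}^\Delta$, Theorem~\ref{theorem1}, and the approximation/interpolation bounds of Corollary~\ref{corollary_3}), but your decomposition is slightly less direct. The paper simply pivots at $\Pi_{k,h}^\Delta u$ rather than at $\Pi_{k,h}^\Delta I_{k,h}u$: writing
\[
|u-\Pi_{k,h}^\Delta u_h|_{2,h}\le |u-\Pi_{k,h}^\Delta u|_{2,h}+|\Pi_{k,h}^\Delta(u-u_h)|_{2,h},
\]
the first term is bounded by $\inf_{p\in\mathcal{P}_h^k}|u-p|_{2,h}$ (since $\Pi_{k,D}^\Delta$ is the $H^2$-seminorm orthogonal projection onto $\mathbb{P}_k(D)$, cf.\ \eqref{pikd1}), and the second by $|u-u_h|_{H^2(\Omega)}$ via \eqref{obvious_ineq}. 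This avoids your detour through $I_{k,h}u$ and the extra triangle inequality $|I_{k,h}u-u_h|\le|I_{k,h}u-u|+|u-u_h|$. Your version works fine and even has the minor advantage of invoking only estimates already packaged in Corollary~\ref{corollary_3}; the paper's version is shorter and makes explicit the best-approximation property of $\Pi_{k,h}^\Delta$.
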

\begin{proof}
	By \eqref{obvious_ineq}, Theorem \ref{theorem1}, Corollary \ref{corollary_3}, and 
	$$
	|u-\Pi_{k,h}^\Delta u|_{2,h}\leq \inf_{p\in\mathcal{P}_h^k}|u-p|_{2,h}, \ \forall u\in H^2(\Omega),
	$$
	the estimate is obtained.
\end{proof}
\begin{corollary}\label{coh1}
	Suppose $u\in H^{k+1}(\Omega)$ then
	$$
	|u-\Pi_{k,h}^\Delta u_h|_{1,h}
	\apprle
	h^{k}(|u|_{H^{k+1}(\Omega)}+ |f|_{H^{l}(\Omega)}),
	$$
	where $l$ is defined in \eqref{index_klm}.
\end{corollary}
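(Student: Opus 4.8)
The plan is to split the error through the triangle inequality into a projection term and a consistency term,
$$
|u-\Pi_{k,h}^\Delta u_h|_{1,h}\le|u-\Pi_{k,h}^\Delta u|_{1,h}+|\Pi_{k,h}^\Delta(u-u_h)|_{1,h}.
$$
The first term is handled at once by the projection estimate of Corollary~\ref{corollary_3} (second line, with $l=k$, which is admissible since $k\ge 2$), giving $|u-\Pi_{k,h}^\Delta u|_{1,h}\apprle h^{k}|u|_{H^{k+1}(\Omega)}$. Everything then reduces to proving $|\Pi_{k,h}^\Delta(u-u_h)|_{1,h}\apprle h^{k}(|u|_{H^{k+1}(\Omega)}+|f|_{H^l(\Omega)})$, and this in turn needs an order-preserving local $H^1$ bound on $\Pi_{k,D}^\Delta$.

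The naive estimate $|\Pi_{k,D}^\Delta v|_{H^1(D)}\apprle h_D^{-1}|||v|||_{k,D}$ from Lemma~\ref{leq-semi-norm} combined with Corollary~\ref{corollary_2} loses one power of $h$, because $h_D^{-1}\|v\|_{L_2(D)}$ is only $O(h^{k-1})$ when $v=u-u_h$ and $k=2$. The fix is to peel off the linear part first: for $v\in H^2(D)$ choose $\bar v\in\mathbb{P}_1$ as in Lemma~\ref{friedrichs}, with $\int_{\partial D}\nabla\bar v\,ds=\int_{\partial D}\nabla v\,ds$ and $\int_{\partial D}\bar v\,ds=\int_{\partial D}v\,ds$. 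Since $\Pi_{k,D}^\Delta$ reproduces polynomials of degree $\le k$ we have $\Pi_{k,D}^\Delta v=\Pi_{k,D}^\Delta(v-\bar v)+\bar v$, so Lemma~\ref{leq-semi-norm} and \eqref{upper_bound2} give
$$
|\Pi_{k,D}^\Delta v|_{H^1(D)}\apprle h_D^{-1}|||v-\bar v|||_{k,D}+|\bar v|_{H^1(D)}\apprle h_D|v|_{H^2(D)}+|\bar v|_{H^1(D)} .
$$
For the remaining term, $\nabla\bar v$ is the constant vector $|\partial D|^{-1}\int_{\partial D}\nabla v\,ds$, so by Cauchy--Schwarz, the bound $|D|/|\partial D|\apprle h_D$ (using \eqref{assume1} and that $D$ has diameter $h_D$), and the scaled trace inequality,
$$
|\bar v|_{H^1(D)}^2=|D|\,|\nabla\bar v|^2\apprle h_D\|\nabla v\|_{L_2(\partial D)}^2\apprle |v|_{H^1(D)}^2+h_D^2|v|_{H^2(D)}^2 .
$$
Combining the last two displays yields the key local estimate
$$
|\Pi_{k,D}^\Delta v|_{H^1(D)}\apprle|v|_{H^1(D)}+h_D|v|_{H^2(D)},\qquad\forall\,v\in H^2(D).
$$

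Finally I apply this with $v=(u-u_h)|_D$ and sum over $D\in\mathcal{T}_h$; since $u-u_h\in H^2_0(\Omega)$ the broken seminorms coincide with the global ones, so
$$
|\Pi_{k,h}^\Delta(u-u_h)|_{1,h}^2\apprle|u-u_h|_{H^1(\Omega)}^2+h^2|u-u_h|_{H^2(\Omega)}^2 .
$$
Inserting $|u-u_h|_{H^1(\Omega)}\apprle h^{k}(|u|_{H^{k+1}(\Omega)}+|f|_{H^l(\Omega)})$ from Theorem~\ref{theorem2} and $|u-u_h|_{H^2(\Omega)}\apprle h^{k-1}(|u|_{H^{k+1}(\Omega)}+|f|_{H^l(\Omega)})$ from Theorem~\ref{theorem1} gives $|\Pi_{k,h}^\Delta(u-u_h)|_{1,h}\apprle h^{k}(|u|_{H^{k+1}(\Omega)}+|f|_{H^l(\Omega)})$, and adding the projection term proved in the first paragraph completes the argument. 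The one genuine obstacle is precisely this order-preserving bound on $|\Pi_{k,D}^\Delta v|_{H^1(D)}$: one must route around the lossy inverse estimate by subtracting the $\mathbb{P}_1$ component via Lemma~\ref{friedrichs}; the rest is just assembling estimates already established.
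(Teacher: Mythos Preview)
Your proof is correct and follows the same approach as the paper: split by the triangle inequality, bound $|u-\Pi_{k,h}^\Delta u|_{1,h}$ via Corollary~\ref{corollary_3}, and for $|\Pi_{k,h}^\Delta(u-u_h)|_{1,h}$ subtract the linear part $\bar\xi$ from Lemma~\ref{friedrichs} before invoking Lemma~\ref{leq-semi-norm} and \eqref{upper_bound2}. The one difference is that you explicitly bound the leftover $|\bar v|_{H^1(D)}$ via a trace argument (the paper instead writes $|\Pi_{k,D}^\Delta\xi|_{H^1(D)}=|\Pi_{k,D}^\Delta(\xi-\bar\xi)|_{H^1(D)}$, which is not literally an equality since $\nabla\bar\xi$ is a nonzero constant), so your final step uses both Theorem~\ref{theorem1} and Theorem~\ref{theorem2} whereas the paper's sketch effectively only calls on the $H^2$ bound.
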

\begin{proof}
	By Corollary \ref{corollary_1}, Corollary \ref{corollary_3}, Theorem \ref{theorem2},   Lemma \ref{leq-semi-norm} and
	$$
	|u-\Pi_{k,h}^\Delta u_h|_{1,h}\leq |u-\Pi_{k,h}^\Delta u|_{1,h}+|\Pi_{k,h}^\Delta u-\Pi_{k,h}^\Delta u_h|_{1,h}.
	$$
	Suppose $\xi = u-u_h,$ and $\bar{\xi}$ is defined as in Lemma \ref{friedrichs},
	the second term is estimated as
	$$
	h_D|\Pi_{k,D}^\Delta \xi|_{H^1(D)}=
	h_D|\Pi_{k,D}^\Delta (\xi-\bar{\xi})|_{H^1(D)}\apprle |||\xi-\bar{\xi}|||_{k,D}\apprle 
	h_D^2|\xi|_{H^2(D)},
	$$
	so that
	$$
	|\Pi_{k,D}^\Delta (u-u_h)|_{H^1(D)}^2\apprle
	h^2_D|u-u_h|^2_{H^2(D)}
	$$
	sum them up then the estimate is obtained.
\end{proof}
\begin{corollary}
	Suppose $u\in H^{k+1}(\Omega)$ then
	\begin{equation}\label{hl2k=2}
		\|u-\Pi_{k,h}^\Delta u_h\|_{L^2(\Omega)}\apprle h^{2}(|u|_{H^{3}(\Omega)}+ \|f\|_{L^2(\Omega)}), \ k=2,
	\end{equation}	
	\begin{equation}
		\|u-\Pi_{k,h}^\Delta u_h\|_{L^2(\Omega)}\apprle h^{k+s}(|u|_{H^{k+1}(\Omega)}+ |f|_{H^{l}(\Omega)}),\ k\geq 3, \ 0<s\leq 1,
	\end{equation}	
	where $l$ is defined in \eqref{index_klm}.
\end{corollary}
\begin{proof}
	By Corollary \ref{corollary_1}, Corollary \ref{corollary_3}, Theorem \ref{theorem2}, Theorem \ref{theorem3},  Lemma \ref{leq-semi-norm} and
	$$
	\|u-\Pi_{k,h}^\Delta u_h\|_{L^2(\Omega)}\leq \|u-\Pi_{k,h}^\Delta u\|_{L^2(\Omega)}+\|\Pi_{k,h}^\Delta u-\Pi_{k,h}^\Delta u_h\|_{L^2(\Omega)}.
	$$
	From Lemma \ref{friedrichs},
	the second term is estimated as
	$$
	\|\Pi_{k,D}^\Delta (u-u_h)\|_{L^2(D)}^2
	\apprle 
	\|u-u_h\|_{L^2(D)}^2+
	h_D^2|u-u_h|_{H^1(D)}^2
	+h_D^4|u-u_h|_{H^2(D)}^2,
	$$
	sum them up then, the estimate is obtained.
\end{proof}
\subsection{Error Estimates for $\Pi_{k,h}^0u_h$}
Since $\Pi_{k,h}^0u_h$ can be computed explicitly, we can also get the similar error estimates between $u$ and $\Pi_{k,h}^0u_h$.
\begin{corollary}\label{coh2pik0}
	Suppose $u\in H^{k+1}(\Omega)$ then
	$$
	|u-\Pi_{k,h}^0 u_h|_{2,h}
	\apprle
	h^{k-1}(|u|_{H^{k+1}(\Omega)}+ |f|_{H^{l}(\Omega)}).
	$$
\end{corollary}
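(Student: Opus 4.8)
The plan is to follow the pattern of the previous corollaries for $\Pi_{k,h}^\Delta u_h$, but with the $H^2$-stability of the $L_2$-projection playing the role that Lemma \ref{leq-semi-norm} played there. First I would split by the triangle inequality over the broken seminorm,
$$
|u-\Pi_{k,h}^0 u_h|_{2,h}\le |u-\Pi_{k,h}^0 u|_{2,h}+|\Pi_{k,h}^0(u-u_h)|_{2,h}.
$$
The first term is exactly the global $L_2$-projection error estimate of Corollary \ref{corollary_3}: taking $l=k$ and using $u\in H^{k+1}(\Omega)$ gives $|u-\Pi_{k,h}^0 u|_{2,h}\apprle h^{k-1}|u|_{H^{k+1}(\Omega)}$.

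For the second term, the key observation is that $\Pi_{k,h}^0$ acts subdomain by subdomain, so
$$
|\Pi_{k,h}^0(u-u_h)|_{2,h}^2=\sum_{D\in\mathcal{T}_h}\bigl|\Pi_{k,D}^0\bigl((u-u_h)|_D\bigr)\bigr|_{H^2(D)}^2 ,
$$
and by the $H^2$-stability bound \eqref{pikd0h2} of Lemma \ref{pikd0-bound} each local term is $\apprle |u-u_h|_{H^2(D)}^2$. Since $u-u_h\in H^2_0(\Omega)$, summing over $D$ gives $|\Pi_{k,h}^0(u-u_h)|_{2,h}\apprle |u-u_h|_{H^2(\Omega)}$, and then Theorem \ref{theorem1} (under the hypothesis $u\in H^{k+1}(\Omega)$) yields $|u-u_h|_{H^2(\Omega)}\apprle h^{k-1}\bigl(|u|_{H^{k+1}(\Omega)}+|f|_{H^{l}(\Omega)}\bigr)$ with $l$ as in \eqref{index_klm}. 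Combining the two bounds proves the claim.

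There is essentially no obstacle to overcome: the only points that need care are that the broken $H^2$-seminorm of a genuinely $H^2(\Omega)$ function coincides with its $H^2(\Omega)$-seminorm, and that the constants hidden in Lemma \ref{pikd0-bound} and Corollary \ref{corollary_3} are uniform over the shape-regular family $\mathcal{T}_h$, which is exactly what the $\apprle$ notation encodes. One should also note that the rate cannot be improved beyond $h^{k-1}$, since both contributions — the projection error and the $H^2$-discretization error from Theorem \ref{theorem1} — are already of this order, so the estimate in Corollary \ref{coh2pik0} is optimal in $H^2$.
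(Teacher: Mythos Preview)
Your proposal is correct and follows exactly the paper's approach: the paper also splits via the triangle inequality $|u-\Pi_{k,h}^0 u_h|_{2,h}\le |u-\Pi_{k,h}^0 u|_{2,h}+|\Pi_{k,h}^0(u-u_h)|_{2,h}$, invokes Lemma~\ref{pikd0-bound} (both the error estimate \eqref{pikd0h2_error} for the first term and the $H^2$-stability \eqref{pikd0h2} for the second), and then Theorem~\ref{theorem1}. Your write-up is simply a more detailed expansion of the paper's terse proof.
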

\begin{proof}
	By Lemma \ref{pikd0-bound}, Theorem \ref{theorem1}, and 
	$$
	|u-\Pi_{k,h}^0 u_h|_{2,h}\leq |u-\Pi_{k,h}^0 u|_{2,h}
	+|\Pi_{k,h}^0 (u-u_h)|_{2,h}, \ \forall u\in H^2(\Omega),
	$$
	the estimate is obtained.
\end{proof}

\begin{corollary}\label{coh1-pik0}
	Suppose $u\in H^{k+1}(\Omega)$ then
	$$
	|u-\Pi_{k,h}^0 u_h|_{1,h}
	\apprle
	h^{k}(|u|_{H^{k+1}(\Omega)}+ |f|_{H^{l}(\Omega)}),
	$$
	where $l$ is defined in \eqref{index_klm}.
\end{corollary}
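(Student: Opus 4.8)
The plan is to mimic the proof of Corollary~\ref{coh1}, splitting the error through the $L_2$-projection of the exact solution. First I would write the triangle inequality
$$
|u-\Pi_{k,h}^0 u_h|_{1,h}\leq |u-\Pi_{k,h}^0 u|_{1,h}+|\Pi_{k,h}^0(u-u_h)|_{1,h},
$$
so that the two pieces can be handled, respectively, by an approximation estimate for $\Pi_{k,h}^0$ applied to the smooth function $u$, and by a stability estimate for $\Pi_{k,h}^0$ applied to the (non-smooth) error $u-u_h$.

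For the first piece, since $u\in H^{k+1}(\Omega)$, the global estimate collected in Corollary~\ref{corollary_3} (which aggregates \eqref{pikd0h1} over the mesh) with $l=k$ gives $|u-\Pi_{k,h}^0 u|_{1,h}\apprle h^{k}|u|_{H^{k+1}(\Omega)}$. For the second piece, I would invoke the local $H^1$-stability of $\Pi_{k,D}^0$ from \eqref{pikd0l2}, namely $|\Pi_{k,D}^0\zeta|_{H^1(D)}\apprle|\zeta|_{H^1(D)}$ for $\zeta\in H^1(D)$, applied with $\zeta=(u-u_h)|_D$ and squared-summed over $D\in\mathcal{T}_h$, to obtain $|\Pi_{k,h}^0(u-u_h)|_{1,h}\apprle |u-u_h|_{1,h}=|u-u_h|_{H^1(\Omega)}$ (the last equality because $u-u_h\in H^2_0(\Omega)$). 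Theorem~\ref{theorem2}, which is available here since $\Omega$ is assumed convex, then yields $|u-u_h|_{H^1(\Omega)}\apprle h^{k}(|u|_{H^{k+1}(\Omega)}+|f|_{H^{l}(\Omega)})$ with $l$ as in \eqref{index_klm}. Adding the two bounds gives the claimed estimate.

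The argument is essentially routine, and there is no serious obstacle; the only point requiring a little attention is that—unlike the $\Pi_{k,D}^\Delta$ case in Corollary~\ref{coh1}, where one must detour through the semi-norm $|||\cdot|||_{k,D}$ and subtract an affine function $\bar\xi$ because $\Pi_{k,D}^\Delta$ has no naive $H^1$-stability—here the genuine $H^1$-stability \eqref{pikd0l2} of the $L_2$-projection can be used directly, so no such detour is needed. One should also make sure the hidden constants are mesh-uniform, which is immediate since \eqref{pikd0l2} and \eqref{pikd0h1} are local estimates with constants depending only on $k$ and $\rho$, and $|\cdot|_{1,h}$ is simply the $\ell^2$-aggregation of the local $H^1$-seminorms.
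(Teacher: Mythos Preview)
Your proposal is correct and matches the paper's own proof essentially line for line: the same triangle-inequality split through $\Pi_{k,h}^0 u$, the same use of \eqref{pikd0h1} (via Corollary~\ref{corollary_3}) for the first term, and the same $H^1$-stability \eqref{pikd0l2} plus Theorem~\ref{theorem2} for the second. Your remark contrasting this with the $\Pi_{k,h}^\Delta$ case in Corollary~\ref{coh1} is also accurate.
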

\begin{proof}
	By \eqref{pikd0l2}, \eqref{pikd0h1}, Theorem \ref{theorem2}, and
	$$
	|u-\Pi_{k,h}^0 u_h|_{1,h}\leq |u-\Pi_{k,h}^0 u|_{1,h}+|\Pi_{k,h}^0 (u-u_h)|_{1,h},
	$$ 
	for the second term, we have
	$$
	|\Pi_{k,D}^0 (u-u_h)|_{H^1(D)}^2\apprle
	|u-u_h|^2_{H^1(D)}
	$$
	sum them up then the estimate is obtained.
\end{proof}

\begin{corollary}\label{copik0l2}
	Suppose $u\in H^{k+1}(\Omega)$ then
	\begin{equation}\label{hl2k=2-pik0}
		\|u-\Pi_{k,h}^0 u_h\|_{L^2(\Omega)}\apprle h^{2}(|u|_{H^{3}(\Omega)}+ \|f\|_{L^2(\Omega)}), \ k=2,
	\end{equation}	
	\begin{equation}
		\|u-\Pi_{k,h}^0 u_h\|_{L^2(\Omega)}\apprle h^{k+s}(|u|_{H^{k+1}(\Omega)}+ |f|_{H^{l}(\Omega)}),\ k\geq 3, \ 0<s\leq 1,
	\end{equation}	
	where $l$ is defined in \eqref{index_klm}.
\end{corollary}
\begin{proof}
	By Theorem \ref{theorem3}, and
	$$
	\|u-\Pi_{k,h}^0 u_h\|_{L^2(\Omega)}\leq \|u-\Pi_{k,h}^0 u\|_{L^2(\Omega)}+\|\Pi_{k,h}^0 (u-u_h)\|_{L^2(\Omega)},
	$$
	the second term is estimated as
	$$
	\|\Pi_{k,D}^0 (u-u_h)\|_{L^2(\Omega)}
	\apprle 
	\|u-u_h\|_{L^2(\Omega)},
	$$
	so the results are obtained.
\end{proof}

\section{Conclusion}
We have extended the works done in \cite{Brenner17} to forth order problems in two dimension. Similar basic estimates for local projections $\Pi_{k,D}^\Delta$, $\Pi_{k,D}^0$, $I_{k,D}$  and the improved error analysis of modified virtual element method for biharmonic equation are obtained. The computable piecewise polynomials $\Pi_{k,h}^\Delta u_h$ and 
$\Pi_{k,h}^0 u_h$ are more efficient to use in practice. 

We can replace \eqref{pikd2} by \eqref{pikd4}
\begin{equation}\label{pikd4}
	\int_D \nabla \Pi_{k,D}^\Delta \xi\ {\rm d}x
	= \int_D \nabla  \xi\ {\rm d}x,
\end{equation}
To compute \eqref{pikd4}, we have 
\begin{equation}
	\int_D\frac{\partial \xi }{\partial x_i}\ {\rm d}x
	= \int_{\partial D}{\xi }n_i\ {\rm d}s.
\end{equation}
For $k\geq 4$, can replace \eqref{pikd3} by \eqref{pikd5}
\begin{equation}\label{pikd5}
	\int_{D} \Pi_{k,D}^\Delta \xi\ {\rm d}s
	= \int_{D}   \xi\ {\rm d}x.
\end{equation}
And the replacements attain same estimates for projections and error analysis.

\section*{Acknowledgments} We would like to thank anonymous reviewers for their valuable comments and suggestions. Also we are grateful for the editors' patience while handling this paper.

\medskip
Received xxxx 20xx; revised xxxx 20xx.
\medskip

\end{document}